\numberwithin{equation}{section}
\renewcommand{\section}{\@startsection {section}{1}{\z@}%
                                   {-3.5ex \@plus -1ex \@minus -.2ex}%
                                   {.5\linespacing}%
                                   {\normalfont\scshape\centering}}
\newtheorem{thm}{Theorem}[section]
\newtheorem{lem}[thm]{Lemma}
\newtheorem{cor}[thm]{Corollary}
\theoremstyle{definition}
\newtheorem{definition}{Definition}[section]
\theoremstyle{remark}
\newtheorem*{remark}{Remark}
\def\beq#1\eeq{\begin{equation}#1\end{equation}}
 \newcommand{\onto}{\to\mkern-14mu\to}
 \def\hbar{{\mathchar'26\mkern-6.5muh}}
 \newcommand{\Wedge}{{\textstyle\bigwedge}}%{\mathop{\wedge}\nolimits}
 \DeclareMathSymbol{\onto}{\mathrel}{AMSa}{"10}
 \renewcommand{\hbar}{{\mathchar'26\mkern-9muh}}
 \newcommand{\Wedge}{{\textstyle\bigwedge}}%{\mathop{\mathsf\Lambda}\nolimits}
\newcommand{\C}{\mathcal{C}}
\newcommand{\Ci}{\C^\infty}
\newcommand{\co}{\mathbb{C}}
\newcommand{\cs}{\mbox{\upshape C}\ensuremath{{}^*}}
\newcommand{\R}{\mathbb{R}}
\newcommand{\Z}{\mathbb{Z}} 
\newcommand{\into}{\hookrightarrow}
\DeclareMathOperator{\id}{id}
\DeclareMathOperator{\rk}{rk}
\newcommand{\inner}{\mathbin{\raise1.5pt\hbox{$\lrcorner$}}}
\newcommand{\K}{\mathcal{K}}
\newcommand{\G}{\mathcal{G}}
\DeclareMathOperator{\pr}{pr}
\newcommand{\T}{\mathbb{T}}
\DeclareMathOperator{\inv}{inv}
\newcommand{\E}{\mathcal{E}}
\newcommand{\D}{\mathcal{D}}
\newcommand{\F}{\mathcal{F}}
\renewcommand{\P}{\mathcal{P}}
\newcommand{\Q}{\mathcal{Q}}
\newcommand{\Rp}{\mathcal{R}}
\newcommand{\Lie}{\mathcal{L}}
\newcommand{\Li}{\mathcal{L}}
\newcommand{\s}{\mathsf{s}}
\renewcommand{\t}{\mathsf{t}}
\newcommand{\m}{\mathsf{m}}
\newcommand{\unit}{\mathsf{1}}
\newcommand{\abs}[1]{\lvert#1\rvert}
\newcommand{\Kahler}{K\"ah\-ler} 
\newcommand{\Podles}{Podle\`{s}}
\newcommand{\Wedgem}{\Wedge^{\mathrm{max}}}
\newcommand{\X}{\mathcal{X}}
\DeclareMathOperator{\Pair}{Pair}
\newcommand{\BS}{\text{B-S}}
\newcommand{\Hei}{\mathbb{H}}
\newcommand{\Hi}{\mathcal H}
\newcommand{\N}{\mathbb N}
\DeclareMathOperator{\Exp}{Exp}
\newcommand{\jac}{\mathcal{S}}
\newcommand{\U}{\mathrm{U}}
\newcommand{\SU}{\mathrm{SU}}
\DeclareMathOperator{\PPr}{Pr}
\renewcommand{\Pr}{\PPr}
\newcommand{\EE}{\mathbb{E}}
\newcommand{\su}{\mathfrak{su}}
\DeclareMathOperator{\Ad}{Ad}
\newcommand{\Omegan}{\Omega^1_{\mathrm{normal}}}
\DeclareMathOperator{\EA}{\mathsf{ex}}
\newcommand{\mon}{\mathcal{N}}
\newcommand{\Chi}{X}
\newcommand{\pro}{q}
\DeclareMathOperator{\Res}{\mathcal{R}}
\DeclareMathOperator{\ev}{ev}
\newcommand{\period}{\ .\hspace*{-.56em}}
\newcommand{\norm}[1]{\lVert#1\rVert}
\newcommand{\Norm}[1]{\left\|#1\right\|}
\newcommand{\naturalto}{\mathrel{\dot\to}}
\title{Quantization of Planck's Constant}
\subjclass[2010]{46L65; \emph{Secondary} 53D17, 22A22, 53D50}
\author{Eli Hawkins}
\begin{document}
\maketitle
\begin{center}
\vspace{-4ex}
\emph{\small Department of Mathematics}\\
\emph{\small The University of York, United Kingdom}\\
{\small mrmuon@mac.com}\\
\end{center}

\begin{abstract}
This paper is about the role of Planck's constant, $\hbar$,  in the geometric quantization of Poisson manifolds using symplectic groupoids. In order to construct a strict deformation quantization of a given Poisson manifold,  one can use all possible rescalings of the Poisson structure, which can be combined into a single  ``Heisenberg-Poisson'' manifold. The new coordinate on this manifold is identified with $\hbar$. I present an explicit construction for a symplectic groupoid integrating a Heisenberg-Poisson manifold and discuss its geometric quantization. I show that in cases where $\hbar$ cannot take arbitrary values, this is enforced by Bohr-Sommerfeld conditions in  geometric quantization.

A Heisenberg-Poisson manifold is defined by linearly rescaling the Poisson structure, so I also discuss nonlinear variations and give an example of quantization of a nonintegrable Poisson manifold using a presymplectic groupoid. 

In appendices, I construct  symplectic groupoids integrating a more general class of Heisenberg-Poisson manifolds constructed from  Jacobi manifolds and discuss the parabolic tangent groupoid.
\end{abstract}

\tableofcontents

\section{Introduction}
The term ``quantization'' is used in many ways in mathematics. Geometric quantization is a method of constructing a Hilbert space based upon a symplectic manifold, $(M,\omega)$. In practice, this has mainly been used to construct unitary group representations, but it was originally intended as a way of constructing the state space of a quantum-mechanical model from the corresponding classical phase space (the symplectic manifold). From this perspective, operators on the Hilbert space should be quantum observables and correspond to functions on phase space. This correspondence is the ``classical limit'', realized in a limit of diverging symplectic form.

This correspondence is made precise with the notion of \emph{strict deformation quantization}. Among other structures, this involves a continuous field of \cs-algebras over a subset $I\subseteq\R$ of the real line. The coordinate on $\R$ is usually denoted $\hbar$ and thought of as Planck's constant. The algebra at $\hbar=0$ is the algebra $\C_0(M)$ of continuous functions vanishing at $\infty$. 

For $(M,\omega)$, the algebra at $\hbar\neq 0$ is the algebra $\K(\Hi_\hbar)$ of compact operators  on the Hilbert space constructed by geometric quantization of $(M,\hbar^{-1}\omega)$. 

The set $I\subseteq\R$ can be very different in different examples. The standard geometric quantization construction requires a line bundle with curvature equal to the symplectic form. In this construction of a deformation quantization, that means that for any $\hbar\neq0\in I$, the cohomology class of $\frac\omega{2\pi\hbar}$ must be integral.

This condition can be highly restrictive or  vacuous. For example, the cohomology class of a symplectic form $\omega\in\Omega^2(S^2)$ on a sphere is nontrivial. By rescaling, we can suppose that $[\frac{\omega}{2\pi}]\in H^2(S^2)$ is the generator of $H^2(S^2;\Z)$, in which case
\[
I = \{0,k^{-1}\mid k\in\N\} .
\]
On the other hand, for $\R^2$, the cohomology is trivial, so $[\frac\omega{2\pi\hbar}]=0$ is trivially integral. This leads to
\[
I = \R_{\geq0} \text{ or } \R
\]
(depending upon the choice of polarization).

In using standard geometric quantization to construct a strict deformation quantization of a symplectic manifold, we must put in the choice of $I$ by hand, although  if we insist upon $I=\R_{\geq0}$ for $S^2$, then the construction cannot be carried out. 

Quantization is not limited to symplectic manifolds. The idea of strict deformation quantization applies to any Poisson manifold, but Hilbert spaces are less useful in general. The algebra of compact operators on a Hilbert space is not general enough to quantize most Poisson manifolds.

In \cite{haw10}, I proposed a generalization of geometric quantization to Poisson manifolds. The idea is to construct a \cs-algebra from a Poisson manifold, $M$, using several structures, including a symplectic groupoid. In particular, if $M$ is symplectic and the symplectic groupoid is the pair groupoid, $\Pair M = M\times M$, then this construction can give the algebra $\K(\Hi)$, where $\Hi$ is a Hilbert space constructed from $M$ by standard geometric quantization.

Like standard geometric quantization, my proposal is a work in progress. I do not have a completely general definition for constructing the \cs-algebra. 

If $M$ is a manifold with Poisson structure $\Pi\in\X^2(M)$, then to construct a strict deformation quantization of $(M,\Pi)$, we first need a family of \cs-algebras. The simplest choice is to construct these by rescaling the Poisson structure to $\hbar\Pi$ and constructing an algebra from that. In particular, when $\hbar=0$, this just gives $\C_0(M)$. As with standard geometric quantization, my construction requires certain integrality conditions to be satisfied, so these algebras are typically only defined for a discrete subset of nonzero $\hbar$'s.

The next step is to assemble these algebras into a continuous field. This means identifying an algebra of ``continuous sections'' which is a \cs-subalgebra of the direct product of the collection of algebras. From the perspective of noncommutative geometry, this means assigning a (noncommutative) topology to a union of (noncommutative) topological spaces. 

Suppose for a moment that there are no nontrivial integrality conditions, and a \cs-algebra can be constructed from $(M,\hbar\Pi)$ for all $\hbar\in\R$. In that case, we want to construct a union over $\R$ of noncommutative spaces. The noncommutative space at $\hbar\in\R$ corresponds to $(M,\hbar\Pi)$, so the noncommutative union should correspond to the union of these Poisson manifolds. It should be constructed from $M\times\R$ with Poisson structure $\hbar\Pi$, where $\hbar$  now means the coordinate on the $\R$ factor.

I will show in this paper  that this still works when there are nontrivial integrality conditions.  The Bohr-Sommerfeld quantization condition automatically picks out the values of $\hbar$ satisfying the necessary integrality conditions. 
This means that the set $I\subseteq\R$ of allowed $\hbar$-values does not have to be chosen by hand. It emerges automatically when constructing a \cs-algebra from $(M\times\R,\hbar\Pi)$.

\subsection{Background}
I will assume that the reader is familiar with symplectic groupoids, but I shall quickly review some notations, terminology, constructions and results.

\begin{definition}
\label{Groupoid def}
A \emph{Lie groupoid} \cite{mac3,m-m} consists of a not necessarily Hausdorff manifold, $\G$, a Hausdorff manifold, $M$ ($\G$ is called a Lie groupoid \emph{over} $M$), and  the following maps: the \emph{unit} is $\unit:M\into\G$, the \emph{inverse} is $\inv:\G\to\G$, the \emph{source} is $\s:\G\to M$,  the \emph{target} is $\t:\G\to M$, and the multiplication is $\m :\G_2\to\G$, where $\G_2 := \G \mathbin{{}_\s\times_\t} \G$ is the manifold of composable pairs. (I regard a groupoid element as an arrow from right to left, in analogy with the standard notation for linear operators.) The Cartesian projections are $\pr_1,\pr_2:\G_2\to\G$. More generally, $\G_n\subset \G^n$ is the manifold of composable $n$-tuples. The target map is required to be a submersion with Hausdorff fibers. The following diagrams are commutative:
\begin{equation}
\label{associativity}
\begin{tikzcd}
\G_3 \arrow{r}{\m\times\id_\G\;} \arrow{d}[swap]{\id_\G\times\m} & \G_2 \arrow{d}{\m} \\
\G_2 \arrow{r}{\m} & \G
\end{tikzcd}
\end{equation}
\begin{equation}
\label{s and t}
\text{(a)}~
\begin{tikzcd}
\G_2 \arrow{r}{\pr_2} \arrow{d}{\m} & \G \arrow{d}{\s} \\
\G \arrow{r}{\s} & M
\end{tikzcd}
\qquad\text{(b)}~
\begin{tikzcd}
\G_2 \arrow{r}{\pr_1} \arrow{d}{\m} & \G \arrow{d}{\t} \\
\G \arrow{r}{\t} & M\period
\end{tikzcd}
\end{equation}
There exists \emph{one} map completing the three commutative diagrams
\begin{equation}
\label{right unit}
\begin{tikzcd}
\G \arrow[dashed]{r}{} \arrow{dr}[swap]{\id_\G} & \G_2\arrow{d}{\pr_1}\\
& \G
\end{tikzcd}
\qquad
\begin{tikzcd}
\G \arrow[dashed]{r}{} \arrow{dr}[swap]{\id_\G} & \G_2\arrow{d}{\m}\\
& \G
\end{tikzcd}
\qquad
\begin{tikzcd}
\G \arrow[dashed]{r}{} \arrow{d}{\s} & \G_2 \arrow{d}{\pr_2} \\
M \arrow{r}{\unit} & \G
\end{tikzcd}
\end{equation}
and another completing the diagrams
\begin{equation}
\label{left unit}
\begin{tikzcd}
\G \arrow[dashed]{r}{} \arrow{d}{\t} & \G_2 \arrow{d}{\pr_1} \\
M \arrow{r}{\unit} & \G
\end{tikzcd}
\qquad
\begin{tikzcd}
\G \arrow[dashed]{r}{} \arrow{dr}[swap]{\id_\G} & \G_2\arrow{d}{\m}\\
& \G
\end{tikzcd}
\qquad
\begin{tikzcd}
\G \arrow[dashed]{r}{} \arrow{dr}[swap]{\id_\G} & \G_2\arrow{d}{\pr_2}\\
& \G \period
\end{tikzcd}
\end{equation}
These commute:
\begin{equation}
\label{s inv t}
\text{(a)}~
\begin{tikzcd}
\G\arrow{r}{\inv}\arrow{dr}[swap]{\t} & \G\arrow{d}{\s}\\
& M
\end{tikzcd}
\qquad\text{(b)}~
\begin{tikzcd}
\G\arrow{r}{\inv}\arrow{dr}[swap]{\s} & \G\arrow{d}{\t}\\
& M\period
\end{tikzcd}
\end{equation}
There exists another map completing the diagrams
\begin{equation}
\label{left inverse}
\begin{tikzcd}
\G \arrow[dashed]{r}{} \arrow{dr}[swap]{\inv} &\G_2\arrow{d}{\pr_1}\\
& \G
\end{tikzcd}
\qquad
\begin{tikzcd}
\G \arrow[dashed]{r}{} \arrow{d}{\s} & \G_2 \arrow{d}{\m} \\
M \arrow{r}{\unit} & \G
\end{tikzcd}
\qquad
\begin{tikzcd}
\G \arrow[dashed]{r}{} \arrow{dr}[swap]{\id_\G} &\G_2\arrow{d}{\pr_2}\\
& \G
\end{tikzcd}
\end{equation}
and one more completing the diagrams
\begin{equation}
\label{right inverse}
\begin{tikzcd}
\G \arrow[dashed]{r}{} \arrow{dr}[swap]{\id_\G} &\G_2\arrow{d}{\pr_1}\\
& \G
\end{tikzcd}
\qquad
\begin{tikzcd}
\G \arrow[dashed]{r}{} \arrow{d}{\t} & \G_2 \arrow{d}{\m} \\
M \arrow{r}{\unit} & \G
\end{tikzcd}
\qquad
\begin{tikzcd}
\G \arrow[dashed]{r}{} \arrow{dr}[swap]{\inv} &\G_2\arrow{d}{\pr_2}\\
& \G \period
\end{tikzcd}
\end{equation}
\end{definition}
\begin{remark}
This expression of the definition in terms of commutative diagrams will be the most useful version for this paper.
In practice, it is usually not convenient to construct $\G_2$ as a subset of $\G\times\G$. Instead, it is treated as a space with structure maps satisfying the definition of a pullback.
\end{remark}

The Lie algebroid of $\G$ is (as a bundle) the normal bundle, $\nu_M\G$, to the unit submanifold (image of $\unit$). The anchor map of a Lie algebroid is denoted $\#$.

\begin{definition}
The multiplicative coboundary (in degree $1$), $\partial^* : \Omega^*(\G) \to \Omega^*(\G_2)$, is $\partial^* = \mathord{\pr}_1^*-\m^*+\mathord{\pr}_2^*$. A \emph{symplectic groupoid}, $(\Sigma,\omega)$ consists of a groupoid $\Sigma$ with a symplectic form $\omega\in\Omega^2(\Sigma)$ that is \emph{multiplicative} in the sense that $\partial^*\omega=0$. A symplectic groupoid $(\Sigma,\omega)$ \emph{integrates} a Poisson manifold $(M,\Pi)$ if $\Sigma$ is an $\s$-connected groupoid over $M$ and $\t:\Sigma\to M$ is a Poisson map (i.e., $\t^*$ intertwines Poisson brackets). 
\end{definition}
\begin{remark}
Some authors instead choose $\s$ to be a Poisson map. I believe that my convention is the most appropriate to geometric quantization. The advantage is apparent from the case when $\Sigma$ is the pair groupoid of a symplectic manifold.
\end{remark}

\begin{definition}
\label{Exponential}
If $\G$ is a Lie groupoid over $M$ with Lie algebroid $A$, then an \emph{exponential map} $\Exp_\G:A\to\G$ is a smooth map such that:
\begin{itemize}
\item
For $x\in M$, $\Exp_\G0_x = \unit(x)$.
\item
The linearization of $\Exp_\G$ about the $0$ section is equal to the canonical identification of $A$ with the normal bundle  $\nu_M\G$.
\item
There exists a neighborhood $U\subset A$ of the $0$ section such that the restriction of $\Exp_\G$ to $U$ is a diffeomorphism to its image, $\Exp_\G U$.
\end{itemize}
\end{definition}
\begin{remark}
Exponential maps always exist. For example, from a connection on any Lie algebroid, Landsman constructs ``left exponential'' and ``Weyl exponential'' maps with these properties \cite[Thm.~3.10.6]{lan2}. This is a very weak definition, chosen to capture the properties of Landsman's examples that I will actually need here.
\end{remark}

Next, I summarize my main definitions and constructions from \cite{haw10}.

\begin{definition}
A \emph{prequantization, $(L,\nabla,\sigma)$, of a symplectic groupoid} $(\Sigma,\omega)$  consists of a Hermitian line bundle $L\to\Sigma$ with connection $\nabla$ and a section $\sigma\in\Gamma(\Sigma_2,\pr_1^*L^*\otimes\m^*L\otimes\pr_2^*L^*)$  such that:
\begin{enumerate}
\item
$\nabla^2=-i\omega$;
\item
$\sigma$ is a (multiplicative) cocycle and has norm $1$ at every point; 
\item
$\sigma$ is covariantly constant.
\end{enumerate}
\end{definition}
The principal $\T$-bundle associated to a prequantization is itself a groupoid, with multiplication determined by the cocycle, $\sigma$.

Recall that applying the tangent functor to the structure maps of a groupoid, $\G$, defines another groupoid, $T\G$.
\begin{definition}
A \emph{groupoid polarization} of $\G$ is a subbundle $\P\subset T_\co\G$ (a \emph{tangent distribution}) with these three properties:
\begin{description}
\item[Involutive]
The space of smooth sections of $\P$ is closed under the Lie bracket.
\item[Hermitian]
$T\inv(\P)=\bar\P$.
\item[Multiplicative]
For all $(\gamma,\eta)\in \G_2$, 
\[
\P_{\gamma\eta} = \{T\m(X,Y) \mid X\in \P_\gamma,\ Y\in \P_\eta,\ T\s(X)=T\t(Y)\}\ .
\]
\end{description}
\end{definition}
\begin{definition}
A \emph{symplectic groupoid polarization} of $(\Sigma,\omega)$  is a groupoid polarization $\P\subset T_\co\Sigma$ that is also Lagrangian at every point.
\end{definition}

A polarization determines several other (typically singular) tangent distributions, which we will need.
\begin{definition}
\label{Distributions}
For any Lie groupoid, $\G$, define the bundles $T^\t\G := \ker T\t$ and $T^\s\G := \ker T\s$.

Given a polarization, $\P$, of a groupoid $\G$ over $M$, the real distributions $\D,\E\subseteq T\G$ are defined by $\D^\co := \P\cap\bar\P$ and $\E^\co := \P+\bar\P$; the complex distribution $\P_0\subset T^\co M$ is defined by $\P_{0\,x} := T\t(\P_{\unit(x)})$ for any $x\in M$; the real distribution $\D_0\subseteq TM$ is defined by $\D_0^\co := \P_0\cap\bar\P_0$. Finally, $\P^\t := T^\t_\co\G\cap\P$, \emph{et cetera}.
\end{definition}

Given a polarization $\P$ of a  groupoid $\G$, define the line bundle 
\beq
\label{form bundle}
\Omega_\P := 
\Wedgem\left(T^\t_\co\G/\P^\t\oplus T^\s_\co\G / \P^\s\right)^*
\eeq
provided that $\P^\t$ is locally trivial. The \emph{half-form bundle} $\Omega^{1/2}_\P$ is defined to be some square root of $\Omega_\P$; that is, a line bundle equipped with an isomorphism
\[
\Omega^{1/2}_\P \otimes \Omega^{1/2}_\P \cong \Omega_\P \ .
\]
 The Bott connection for $\P$ determines a flat $\P$-connection on $\Omega^{1/2}_\P$. It is thus meaningful to speak of ``polarized'' (that is, $\P$-constant) sections of $\Omega^{1/2}_\P$.  

Given a prequantization $(L,\nabla,\sigma)$ and a polarization $\P$ of a symplectic groupoid $\Sigma$, 
the flat $\P$-connection on $\Omega^{1/2}_\P$ and the connection on $L$ combine to determine a flat $\P$-connection on $L\otimes \Omega^{1/2}_\P$. This determines a sheaf of $\P$-polarized sections of this line bundle. In the best case, there exist enough globally polarized sections to construct a convolution algebra.

Suppose that $a,b\in\Gamma(\Sigma,L\otimes \Omega^{1/2}_\P)$ are globally polarized sections. I want to define the convolution product $a*b$ as another globally polarized section of the same line bundle. The first step is to pull $a$ and $b$ back to $\Sigma_2$ and multiply, but this gives
\[
\pr_1^*a\cdot\pr_2^*b \in \Gamma(\Sigma_2,\pr_1^*L\otimes\pr_2^*L\otimes\pr_1^*\Omega^{1/2}_\P\otimes\pr_2^*\Omega^{1/2}_\P)\ .
\]
The factors of $L$ are not quite right, but this is corrected by the cocycle, $\sigma$:
\beq
\label{raw twisted}
\pr_1^*a\cdot\pr_2^*b\cdot \sigma \in \Gamma(\Sigma_2,\m^*L \otimes\pr_1^*\Omega^{1/2}_\P\otimes\pr_2^*\Omega^{1/2}_\P)\ .
\eeq
To define $(a*b)(\gamma)$ for any $\gamma\in\Sigma$, this needs to be integrated over the $\m$-fiber $\m^{-1}(\gamma)$, which  is parameterized by the $\t$-fiber,  by
\begin{gather*}
\t^{-1}[\t(\gamma)] \to \m^{-1}(\gamma) \\
\eta \mapsto (\eta,\eta^{-1}\gamma)\ .
\end{gather*}
The product \eqref{raw twisted} is actually $\D$-polarized in this fiber, so I actually don't want to integrate over those irrelevant directions. Instead, we should integrate over a complete transversal to $\D$ in the $\t$-fiber. 

This requires a further correction, because the line bundle doesn't contain quite the right factor to integrate in this way. This comes from the difference between $\E^\t$ and $\D^\t$. 
\begin{definition}
\label{Epsilon factor}
Let $m := \rk\P_0 - \rk \D_0$ (and suppose that this is constant). Let $\varepsilon_\P \in \Gamma(\Gamma,\Wedgem[\E^\t/\D^\t]^*)$ be the restriction of
\[
\frac{1}{m!}\left(\frac{-\omega}{2\pi}\right)^m \ .
\]
\end{definition}
This is a nonvanishing section (hence a trivialization) of this line bundle  \cite[Thm.~5.3]{haw10}. Its square root provides precisely the needed correction.
\begin{remark}
The power of $2\pi$ is not absolutely necessary, but we will see in examples that this is the most convenient normalization.
\end{remark}

\begin{definition}
The convolution product of two globally $\P$-polarized sections $a,b\in\Gamma(\Sigma,L\otimes \Omega^{1/2}_\P)$ is given at $\gamma\in\Sigma$ by
\[
(a*b)(\gamma) = \int a(\eta) b(\eta^{-1}\gamma) \sigma(\eta,\eta^{-1}\gamma) \sqrt{\varepsilon_\P(\eta)}\ ,
\]
where the integral is over all $\eta\in\t^{-1}[\t(\gamma)]$ in a complete transversal to $\D$.

The involution is given at $\gamma$ by $a^*(\gamma)=\bar a(\gamma^{-1})$.

The \emph{twisted polarized convolution algebra} is the set of polarized sections that fall off rapidly, with this product and involution. Let $\cs_\P(\Sigma,\sigma)$ denote the maximal completion of this ${}^*$-algebra to a \cs-algebra. I will also sometimes denote this as $\cs_\P(\Sigma,L)$ if the prequantization is $(L,\nabla,1)$ or as $\cs_\P(\Sigma,\theta)$ if the prequantization is $(\co\times\Sigma,d+i\theta,1)$.
\end{definition}
The idea is that if $\Sigma$ integrates a Poisson manifold $(M,\Pi)$, then $\cs_\P(\Sigma,\sigma)$ should be a quantization of the algebra $\C_0(M)$.

There are some amendments to this construction that will be needed in this paper.

The first is the inclusion of \emph{Bohr-Sommerfeld conditions} \cite{sni}. The real part, $\D$, of the polarization is a foliation of $\Sigma$. A polarized section $a$ of $L\otimes\Omega^{1/2}_\P$ must in particular be covariantly constant along the leaves of $\D$, and if there is nontrivial holonomy around a multiply connected leaf, then $a$ must vanish there. If $\D$ has multiply connected leaves, then the holonomy is generically nontrivial and a continuous polarized section must vanish everywhere. The standard solution to this problem is to relax the requirement of continuity, or equivalently to deal with polarized sections over the \emph{Bohr-Sommerfeld subvariety}, $\Sigma_\BS$, which is defined to be the set of points of $\Sigma$ through which there is trivial holonomy around the leaves of $\D$. This approach works for quantization using symplectic groupoids \cite{wei4,haw10} when $\Sigma_\BS\subset\Sigma$ is a subgroupoid.

Another issue is particular to complex polarizations. A negatively curved line bundle on a complex manifold either has no nonzero holomorphic sections, or else the holomorphic sections grow very rapidly. In this way, the polarized sections of $L\otimes\Omega_\P^{1/2}$ may all vanish or be badly behaved over at least part of $\Sigma$. A sufficient condition to prevent this is:
\begin{definition}
\label{Positive}
A polarization is \emph{positive} \cite{sni} at $\gamma\in\Sigma$ if for any $X\in\P_\gamma$, 
\[
i\omega(X,\bar X) \geq 0\ .
\]
\end{definition}
This suggests, in analogy with the Bohr-Sommerfeld conditions, to only work with the part of $\Sigma$ over which $\P$ is positive. 

Another problem is that $\P^\t$ is not necessarily locally trivial, and hence $\Omega_\P$ may not be a bundle at all. In these cases, we need to keep in mind that the important thing is not the bundle, but the sheaf of polarized sections. It is reasonably apparent what this should mean in some cases, including those discussed in this paper.

\subsection{Strict deformation quantization}
The idea of strict deformation quantization began with Rieffel \cite{rie4}. There  are several variations on the definition, which I surveyed in \cite{haw11}. Most versions of strict deformation quantization of a Poisson manifold $(M,\Pi)$ use two main structures: a continuous field of \cs-algebras, $A$, over some $I\subseteq \R$ and a linear map $Q:\Ci_0(M)\to \Gamma(I,A)$.
Let's also write $Q_\hbar:\Ci_0(M)\to A_\hbar$ for the composition of $Q$ with the evaluation at $\hbar\in I$.

The most important axioms are that:
\begin{enumerate}
\item
$0\in I$ and is an accumulation point.
\item
$A_0 = \C_0(M)$ and $Q:\Ci_0(M)\to\C_0(M)$ is the inclusion.
\item
For any $f,g\in \Ci_0(M)$,
\[
\lim_{\hbar\to0} \Norm{\hbar^{-1}[Q_\hbar(f),Q_\hbar(g)]-iQ_\hbar(\{f,g\})} =0\ ,
\]
where $\{\;\cdot\;,\;\cdot\;\}$ is the Poisson bracket determined by $\Pi$ on $\Ci_0(M)$.
\end{enumerate}

This makes precise the idea that the Poisson algebra of smooth functions on $(M,\Pi)$ is an approximation to the noncommutative algebra $A_1$. More generally, $(M,\hbar\Pi)$ corresponds to $A_\hbar$.

In the spirit of Noncommutative Geometry, we can think of \ $\Gamma(I,A)$ as the algebra of functions on a noncommutative space over $I$. 
Each of the fibers of the projection to $I$ is a noncommutative version of $M$. The fiber at $0\in I$ is $M$ itself, while the algebra of functions on the fiber at $\hbar\in I$ is $A_\hbar$.
%The algebra $A_\hbar$ comes from the fiber of the projection to $I$ at $\hbar$, and $M$ is the fiber over $0\in I$.

The continuous field, $A$, is the most important structure here. The map $Q$ simply gives a preferred way of extending each smooth function on $M$ to this larger noncommutative space. More than one choice of $Q$ will encode the same essential information. 

I would like to systematically construct strict deformation quantizations using the quantization recipe summarized in the previous section. In order to construct the continuous field, we need the algebras $A_\hbar$ and the algebra of continuous sections, $\Gamma(I,A)$. In principle, $A_\hbar$ should be constructed by quantizing $(M,\hbar\Pi)$. To get $\Gamma(I,A)$, we should quantize $M\times I$ with a Poisson structure such that the projection to $I$ is a Poisson map and the fiber over $\hbar\in I$ is $(M,\hbar\Pi)$.

Of course, $M\times I$ is not necessarily a manifold, but it is if $I=\R$, and this case motivates the definition of the Heisenberg-Poisson manifold of $(M,\Pi)$.

\subsection{Other notation}
I use the standard notations $\C$ for continuous functions, $\Ci$ for smooth (infinitely differentiable) functions, $\Gamma$ for smooth sections of a vector bundle, $\Wedge^*$ for exterior powers, $\Wedgem$ for maximal exterior power, $\Omega^*$ for differential forms, and $\X^*$ for multivector fields. The symbol $\inner$ denotes contraction of a vector into a differential form.
For any set, $A$, $\id_A:A\to A$ is the identity map.
The normal bundle to a submanifold $N\subset M$ is $\nu_N M = TM\rvert_N/TN$.

$\R$ is the set of real numbers. $\R_*$ is the nonzero real numbers. $\R_+$ is the strictly positive real numbers. $\co$ is the set of complex numbers. $\Z$ is the set of integers. $\N$ is the set of strictly positive integers. $\T\subset\co$ is the group of complex numbers of modulus $1$.

There are two notations associated with Cartesian products that I have found very useful here, but which may seem cryptic at first. If $f:A\to B$ and $g:C\to D$ are maps, then their Cartesian product is $f\times g :A\times C \to B\times D$. On the other hand, if $f:A\to B$ and $g:A\to C$, then $(f,g) : A \to B\times C, a\mapsto (f(a),g(a))$.

\begin{remark}
The symbol $\hbar$ has multiple meanings in this paper, and there is some potential for confusion. Several of the manifolds considered here are manifolds over $\R$, and $\hbar$ denotes the canonical projection to $\R$ from any of these. It can also be thought of as a coordinate, and so I will sometimes speak about subsets determined by values of $\hbar$. Finally, $\hbar$ is also an element of the algebra of functions on any of these manifolds, and after quantization this becomes the parameter, $\hbar$, used in deformation quantization. 
\end{remark}

\subsection{Outline}
In Section~\ref{Heisenberg-Poisson}, I construct a symplectic groupoid that integrates the Heisenberg-Poisson manifold of an arbitrary Poisson manifold. I begin with several structures constructed from a Poisson manifold: the Heisenberg-Poisson manifold, the cotangent Lie algebroid associated to the Poisson structure, and another Lie algebroid associated to the Poisson structure as a special case of a Jacobi structure. Comparing these constructions suggests the structure of the symplectic groupoid integrating the Heisenberg-Poisson manifold. The explicit construction requires some general results about double explosions of manifolds. 

In Section~\ref{Quantization HP}, I discuss geometric quantization using this symplectic groupoid. First, I discuss the prequantization, polarization, and half-form bundle needed in this construction. Then, I analyze what happens in the cases when there are or are not integrality conditions. I show how Bohr-Sommerfeld conditions enforce these integrality conditions.

In Section~\ref{Examples}, I look at the explicit geometric quantization of the Heisenberg-Poisson manifold of three examples: a vector space with constant Poisson structure, the sphere $S^2$ with a symplectic structure, and the dual of a Lie algebroid. In the case of the sphere, it is easy to compare several perspectives on the quantization.

In Section~\ref{Other}, I consider generalizations of  Heisenberg-Poisson manifolds, in which the Poisson structure is not simply rescaled linearly. I show that these tend to not be integrable to symplectic groupoids. In one example, I show how my geometric quantization  construction can be generalized to give the appropriate quantization.

In Appendix~\ref{Exploding contact}, I give a further generalization of the construction of the symplectic groupoid. Jacobi manifolds are a generalization of Poisson manifolds, and Heisenberg-Poisson manifolds can also be constructed from Jacobi manifolds. My construction of a symplectic groupoid integrating a Heisenberg-Poisson manifold extends to this generality.

In Appendix~\ref{Parabolic}, I describe how the double explosion construction that I use here can be used to construct the  parabolic tangent groupoid with which Erik van~Erp \cite{erp1} computed the indices of subelliptic operators on contact manifolds.

\section{Heisenberg-Poisson manifolds}
\label{Heisenberg-Poisson}
\subsection{Heisenberg and Jacobi}
Let $M$ be a  manifold with Poisson bivector $\Pi\in\X^2(M)$. This defines a Poisson bracket by $\{f,g\}:=\Pi(df,dg)$ for any $f,g\in\Ci(M)$.

There are two important Lie algebroids defined by a Poisson structure. 
\begin{definition}
The \emph{cotangent Lie algebroid} of $(M,\Pi)$ is the cotangent bundle $T^*M$ with the anchor map $\#_\Pi:T^*M\to TM$ defined by contraction with $\Pi$ and the \emph{Koszul bracket} $[\,\cdot\,,\,\cdot\,]_\Pi$ --- which is the unique Lie algebroid bracket such that for any $f,g\in\Ci(M)$,
\[
[df,dg]_\Pi = d\{f,g\} .
\]
\end{definition}

Poisson structures can be viewed as a special type of Jacobi structure (see App.~\ref{Exploding contact}). There is a Lie algebroid naturally associated to any Jacobi structure, and in particular to any Poisson structure.
\begin{definition}
The \emph{Jacobi Lie algebroid} \cite{c-z,k-s} of $(M,\Pi)$ is the bundle $T^*_\Pi M := T^*M\oplus \R$ with the anchor $\#(\alpha,f)=\#_\Pi\alpha$ and the bracket
\[
[(\alpha,f),(\beta,g)] = ([\alpha,\beta]_\Pi,\Pi(\alpha,\beta)) ,
\]
for $\alpha,\beta\in\Omega^1(M)$, $f,g\in\Ci(M)$.
\end{definition}
\begin{remark}
The Poisson bivector, $\Pi$, is a $2$-cocycle of the Lie algebroid $T^*M$, and $T^*_\Pi M$ is the central extension determined by it. My notation, $T^*_\Pi M$, is based on this fact.
\end{remark}
\begin{definition}
\label{Contact}
A \emph{contact form} on a $2n+1$-dimensional manifold is a $1$-form, $\theta$, such that $\theta\wedge(d\theta)^n$  is nowhere vanishing. 

A \emph{strict contact groupoid} $(\jac,\theta)$ is a Lie groupoid $\jac$ with a contact form $\theta\in\Omega^1(\jac)$ that is multiplicative:
\[
\partial^*\theta = \pr_1^*\theta-\m^*\theta+\pr_2^*\theta = 0 \ .
\]

A (strict) contact groupoid $(\jac,\theta)$ \emph{integrates} a Poisson manifold $(M,\Pi)$ as a Jacobi manifold if $\jac$ is an $\s$-connected groupoid over $M$ and $\t:\jac\to M$ is a Jacobi map in the sense that  for any $\gamma \in \jac$,  $\xi\in T^*_{\t\gamma} M$, and $X\in T_\gamma\jac$,
\[
X \inner d\theta = - \t^*\xi \implies T\t(X) = \#_\Pi \xi\ .
\]
\end{definition}
If $(\jac,\theta)$ integrates $(M,\Pi)$, then the Lie algebroid of $\jac$ is the Jacobi Lie algebroid, $T^*_\Pi M$. In fact, $(M,\Pi)$ is integrable to a contact groupoid if and only if $T^*_\Pi M$ is integrable  \cite{c-z}. 
In particular, the $\T$-bundle associated to a prequantization of a symplectic groupoid integrating $(M,\Pi)$ is a  contact groupoid integrating $(M,\Pi)$; see Section~\ref{Multiply}.
\begin{remark}
Contact groupoids that are not strict are discussed in Appendix~\ref{Exploding contact}. A contact groupoid integrating a Poisson manifold is always strict.
\end{remark}

\begin{definition}
\label{Heisenberg}
Let $\Pr_M:M\times \R\onto M$ denote the Cartesian projection and $\hbar$ the coordinate on $\R$.
Given a Poisson manifold, $(M,\Pi)$, the \emph{Heisenberg-Poisson manifold} \cite{wei2} is $\Hei(M,\Pi)=(M\times\R,\hbar\Pi)$. To be precise,  the Poisson structure is defined by 
\[
\{\Pr_M^*f,\Pr_M^*g\}_{\hbar\Pi} = \hbar \Pr_M^*\{f,g\}
\]
and
\[
\{\hbar,\Pr_M^*f\}_{\hbar\Pi}=0
\]
for $f,g\in\Ci(M)$.
For any Poisson map, $\phi$, define $\Hei\phi:=\phi\times\id_\R$. This $\Hei$ is a functor from the category of Poisson manifolds to the category of Poisson manifolds over $\R$. I also denote $\Hei M:=M\times\R$.
\end{definition}

Note that any cotangent vector on $\Hei M$ can be written as the sum of a multiple of $d\hbar$ and the pullback by $\Pr_M$ of a cotangent vector on $M$.

The Koszul bracket on $\Hei(M,\Pi)$ satisfies
\begin{align*}
[\Pr_M^*df,\Pr_M^*dg]_{\hbar\Pi} 
&= d(\hbar \Pr_M^*\{f,g\}) = \hbar \Pr_M^* d\{f,g\} + \Pr_M^*\{f,g\} d\hbar\\ 
&= \hbar \Pr_M^* [df,dg]_\Pi + \Pr_M^*[\Pi(df,dg)] d\hbar 
\end{align*}
and
\[
[d\hbar,\Pr_M^*df]_{\hbar\Pi} = 0 \ .
\]
By the properties of a Lie algebroid bracket this gives, for any $\alpha,\beta\in\Omega^1(M)$ 
\beq
\label{intertwine}
[\Pr_M^*\alpha,\Pr_M^*\beta]_{\hbar\Pi} = \hbar \Pr_M^* [\alpha,\beta]_\Pi + \Pr_M^*[\Pi(\alpha,\beta)] d\hbar 
\eeq
and
\[
[d\hbar,\alpha]_{\hbar\Pi} = 0 \ .
\]

This is very similar to the definition of the Jacobi bracket, except for the factor of $\hbar$. Let $\R_*$ be the set of nonzero real numbers. Over $M\times \R_* \subset \Hei M$, we can cancel that factor of $\hbar$ by rescaling the $1$-forms,
\[
[\hbar^{-1}\Pr_M^*\alpha,\hbar^{-1}\Pr_M^*\beta]_{\hbar\Pi} = \hbar^{-1} \Pr_M^* [\alpha,\beta]_\Pi + \Pr_M^*[\Pi(\alpha,\beta)] \hbar^{-2} d\hbar  .
\]
This shows that we can identify the Lie algebroid $T^*(M\times\R_*)$ with $T^*_\Pi M\times \R_*$. 
\begin{definition}
Let $\chi : T^*\Hei M\to T^*_\Pi M$ be the  vector bundle map over $\Pr_M:\Hei M \to M$ such that for any $\alpha\in\Omega^1(M)$, $\chi (\Pr_M^*\alpha) = (\hbar \alpha,0)$ and $\chi(d\hbar) = (0,\hbar^2)$.
\end{definition}
\begin{lem}
\label{Intertwine}
The map $\chi : T^*\Hei M\to T^*_\Pi M$ is a Lie algebroid homomorphism, and the restriction of $(\chi,\hbar)$ to $\hbar\neq0$ is an isomorphism from $T^*(M\times\R_*)$ to $T^*_\Pi M \times\R_*$.
\end{lem}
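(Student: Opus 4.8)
The plan is to check directly the two defining properties of a Lie algebroid homomorphism — compatibility with the anchors and with the brackets — on a convenient generating set of sections, and then to read off the ``$\hbar\neq0$'' statement from the invertibility of the fibre maps. First, $\chi$ is a well-defined bundle map over $\Pr_M$: at $(x,\hbar)\in\Hei M$ the cotangent space splits canonically as $(\Pr_M^*T^*M)_{(x,\hbar)}\oplus\R\,(d\hbar)_{(x,\hbar)}$ (this is the observation, noted above, that every cotangent vector on $\Hei M$ is uniquely $\Pr_M^*\alpha+c\,d\hbar$), so the prescription $(\Pr_M^*\alpha,c\,d\hbar)\mapsto(\hbar\alpha,\hbar^2c)$ defines a smooth, fibrewise-linear map $T^*\Hei M\to T^*_\Pi M$ covering $\Pr_M$ and $\Ci(\Hei M)$-linear on sections.

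Next I would verify anchor compatibility, $\#\circ\chi=T\Pr_M\circ\#_{\hbar\Pi}$, on the generators $\Pr_M^*\alpha$ and $d\hbar$. Since $\hbar$ is a Casimir of $\hbar\Pi$ and $\Pi$, as a bivector on $\Hei M$, has no $\partial_\hbar$-component, $\#_{\hbar\Pi}(\Pr_M^*\alpha)=\hbar\,\#_\Pi\alpha$ lies in $TM$ and $\#_{\hbar\Pi}(d\hbar)=0$; applying $T\Pr_M$ gives $\hbar\#_\Pi\alpha$ and $0$, which match $\#(\hbar\alpha,0)=\hbar\#_\Pi\alpha$ and $\#(0,\hbar^2)=0$. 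Bracket compatibility is essentially the content of \eqref{intertwine} and the displays around it: it suffices to check the identity on the generating set $\{\Pr_M^*df:f\in\Ci M\}\cup\{d\hbar\}$, since these locally frame $\Omega^1(\Hei M)$ and, $\chi$ being a bundle map with matching anchors, the Leibniz rule for the Koszul and Jacobi brackets propagates the identity to all sections. On these generators, $\chi[\Pr_M^*df,\Pr_M^*dg]_{\hbar\Pi}=\chi\bigl(\hbar\Pr_M^*d\{f,g\}+\Pr_M^*\{f,g\}\,d\hbar\bigr)=(\hbar^2 d\{f,g\},\hbar^2\{f,g\})$, while the Jacobi bracket gives $[(\hbar\,df,0),(\hbar\,dg,0)]=(\hbar^2[df,dg]_\Pi,\hbar^2\Pi(df,dg))=(\hbar^2 d\{f,g\},\hbar^2\{f,g\})$; the brackets involving $d\hbar$ vanish on both sides. (Since $\chi$ covers the non-injective $\Pr_M$, I would make ``the bracket on the target'' precise via the pullback of $T^*_\Pi M$ along $\Pr_M$ equipped with its $\hbar$-wise Jacobi bracket; the comparison above is exactly the required one.)

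For the restriction to $\hbar\neq0$, the map $(\chi,\hbar)$ covers $\id_{M\times\R_*}$ and its fibre map at $(x,\hbar)$ is $(\xi,c)\mapsto(\hbar\xi,\hbar^2c)$, a linear automorphism of $T^*_xM\oplus\R$ with inverse $(\eta,b)\mapsto(\hbar^{-1}\eta,\hbar^{-2}b)$; hence $(\chi,\hbar)$ is a vector-bundle isomorphism, and being a Lie algebroid homomorphism by the previous steps it is a Lie algebroid isomorphism of $T^*(M\times\R_*)$ onto $T^*_\Pi M\times\R_*$. Equivalently, the rescaled identity displayed just before the definition of $\chi$ shows directly that $\hbar^{-1}\Pr_M^*\alpha\mapsto(\alpha,0)$ and $\hbar^{-2}d\hbar\mapsto(0,1)$ intertwine the two brackets.

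I expect the only genuinely delicate point to be the bookkeeping around the notion of a Lie algebroid homomorphism covering a base map that is not a diffeomorphism: fixing in what sense the bracket condition is to hold for $\chi$ over $\Pr_M$, and confirming that checking it on the exact-form generators together with the Leibniz rule really does suffice. Everything else is a transcription of the bracket formulas already displayed in the text.
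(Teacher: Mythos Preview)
Your proposal is correct and follows essentially the same route as the paper: verify anchor and bracket compatibility on the generators $\Pr_M^*\alpha$ (or $\Pr_M^*df$) and $d\hbar$, then observe that the fibre map is invertible for $\hbar\neq0$.

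The one organizational difference is exactly the point you flag as delicate. Rather than wrestle with what a Lie algebroid homomorphism over the non-injective base map $\Pr_M$ means (and invoking a pullback algebroid), the paper simply observes that it suffices to check the base-preserving map $(\chi,\hbar):T^*\Hei M\to T^*_\Pi M\times\R$, for which the notion of homomorphism is unambiguous; one then recovers $\chi$ by composing with the projection. This sidesteps the bookkeeping entirely. The paper also works directly with general $\alpha,\beta\in\Omega^1(M)$ via eq.~\eqref{intertwine} rather than checking on exact forms and appealing to Leibniz, which is a cosmetic difference since \eqref{intertwine} was itself derived that way.
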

\begin{proof}
To show that $\chi$ is a homomorphism, it is sufficient to check that the base-preserving map $(\chi,\hbar) : T^*\Hei M\to T^*_\Pi M \times\R$ is a homomorphism, so  we just need to check that this intertwines the anchors and brackets. 

For any $\alpha\in\Omega^1(M)$,
\[
\#_{\hbar\Pi}(\Pr_M^*\alpha) = (\hbar\,\#_\Pi\alpha,0) = \#[\chi(\Pr_M^*\alpha)]
\]
and
\[
\#_{\hbar\Pi}d\hbar = 0 = \#[\chi(d\hbar)] \ .
\]

Equation \eqref{intertwine} shows that for $\alpha,\beta\in\Omega^1(M)$,
\begin{align*}
\chi([\Pr_M^*\alpha,\Pr_M^*\beta]_{\hbar\Pi}) &= \chi(\hbar \Pr_M^* [\alpha,\beta]_\Pi + \Pr_M^*[\Pi(\alpha,\beta)] d\hbar ) \\
&= (\hbar^2[\alpha,\beta]_\Pi,\hbar^2\Pi(\alpha,\beta))
= [(\hbar\alpha,0),(\hbar\beta,0)] \\
&= [\chi(\Pr_M^*\alpha),\chi(\Pr_M^*\beta)] \ .
\end{align*}
Likewise,
\[
\chi([d\hbar,\Pr_M^*\alpha]_{\hbar\Pi}) = 0 = [(0,\hbar^2),(\hbar\alpha,0)] = [\chi(d\hbar),\chi(\Pr_M^*\alpha)] \ .
\]

Over $\hbar\neq 0$, $(\chi,\hbar)$ is a bundle isomorphism, and therefore a Lie algebroid isomorphism.
\end{proof}

\subsection{Integration: motivation}
\label{Integration1}
The first objective here is to integrate $\Hei(M,\Pi)$, i.e., to construct a symplectic groupoid over $\Hei M$ (with Lie algebroid $T^*\Hei M$) such that the target map is a Poisson map. 
\begin{thm}
The Heisenberg-Poisson manifold $\Hei(M,\Pi)$ is integrable to a symplectic groupoid if and only $(M,\Pi)$ is integrable to a contact groupoid.
\end{thm}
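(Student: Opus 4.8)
The plan is to prove both implications by translating between a symplectic groupoid over $\Hei M$ and a contact groupoid over $M$, using the relationship between the Lie algebroids $T^*\Hei M$ and $T^*_\Pi M$ established in Lemma~\ref{Intertwine}. By the integrability criterion of Crainic--Fernandes (and, for the Jacobi/contact side, the result of Crainic--Zhu \cite{c-z}), it suffices to show that $T^*\Hei M$ is integrable if and only if the Jacobi Lie algebroid $T^*_\Pi M$ is integrable; the extra content of the theorem is then to upgrade an integration of the algebroid to one of the appropriate geometric type (symplectic, resp.\ contact) with the correct structure maps.

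For the ``if'' direction, I would start from a strict contact groupoid $(\jac,\theta)$ integrating $(M,\Pi)$, so that the Lie algebroid of $\jac$ is $T^*_\Pi M$. The idea is to build the symplectic groupoid over $\Hei M$ as a kind of ``symplectization relative to $\hbar$'': over the locus $\hbar\neq 0$, Lemma~\ref{Intertwine} identifies $T^*(M\times\R_*)$ with $T^*_\Pi M\times\R_*$, so one expects the symplectic groupoid over $M\times\R_*$ to be $\jac\times\R_*$ with symplectic form built from $\theta$ and $\hbar$ --- concretely $\omega = d(\hbar\,\theta)$ (or $d(\hbar^{-1}\theta)$ after the rescaling, matching the powers of $\hbar$ appearing in the definition of $\chi$), which is multiplicative because $\theta$ is and $\hbar$ is constant on the relevant fibers. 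The work is then to check that this extends smoothly across $\hbar=0$ to a symplectic groupoid over all of $\Hei M$ whose Lie algebroid is $T^*\Hei M$ and whose target map is Poisson; at $\hbar=0$ the fiber should degenerate to (a piece of) the cotangent bundle $T^*M$ with its canonical symplectic groupoid structure. This gluing is exactly the ``double explosion'' construction promised in the outline, and I expect it to be carried out in the body of the paper rather than here; for the present existence statement it is enough to know such an extension exists.

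For the ``only if'' direction, suppose $(\Sigma,\omega)$ is a symplectic groupoid integrating $\Hei(M,\Pi)$. Since $\t:\Sigma\to\Hei M$ is Poisson and $\hbar$ is a Casimir on $\Hei M$ (because $\{\hbar,\Pr_M^*f\}_{\hbar\Pi}=0$), the function $\t^*\hbar$ is a Casimir on $\Sigma$, hence (as $\Sigma$ is symplectic) locally constant; restricting to the $\s$-connected component over $M\times\{1\}$, or equivalently setting $\hbar=1$, should produce a symplectic groupoid $\Sigma_1$ over $M$ integrating $(M,\Pi)$ together with the extra $\R$-worth of data coming from the $d\hbar$ direction, which by Lemma~\ref{Intertwine} is precisely the central extension $T^*_\Pi M$. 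More precisely, the multiplicative vector field on $\Sigma$ generating translation in $\hbar$, or dually the multiplicative $1$-form obtained by contracting $\omega$ with the Hamiltonian vector field of $\t^*\hbar$, should restrict to a multiplicative contact form $\theta$ on $\Sigma_1$, exhibiting $(\Sigma_1,\theta)$ as a strict contact groupoid integrating $(M,\Pi)$. One then checks the Jacobi-map condition for $\t:\Sigma_1\to M$ from the Poisson-map condition for $\t:\Sigma\to\Hei M$.

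The main obstacle is the behavior at $\hbar=0$: away from that hypersurface everything is a clean rescaling handled by Lemma~\ref{Intertwine}, but making the construction global --- producing an honest smooth symplectic groupoid over $M\times\R$ rather than just over $M\times\R_*$, and conversely extracting a contact form that is regular (a genuine contact form, not just a $1$-form degenerating somewhere) --- is where the real analysis lies, and this is precisely what the ``double explosion'' machinery in Section~\ref{Heisenberg-Poisson} is designed to supply. I would therefore structure the proof so that the algebroid-level equivalence and the $\hbar\neq0$ identification are established here, and the smooth extension across $\hbar=0$ is cited from the explicit construction given later in the section.
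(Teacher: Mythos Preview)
Your plan diverges from the paper's proof in a substantial way. The paper's argument is purely at the Lie-algebroid level and never constructs a groupoid: it invokes Crainic--Fernandes (Poisson integrable $\Leftrightarrow$ cotangent algebroid integrable $\Leftrightarrow$ monodromy groups locally uniformly discrete) and Crainic--Zhu (contact-integrable $\Leftrightarrow$ $T^*_\Pi M$ integrable), and then uses Lemma~\ref{Intertwine} to compute the monodromy groups of $T^*\Hei M$ directly in terms of those of $T^*_\Pi M$. Since $\hbar$ is constant on symplectic leaves, the restriction to a fixed $\hbar\neq 0$ is a full subalgebroid isomorphic to $T^*_\Pi M$, so $\mon_{(x,\hbar)}(T^*\Hei M)=\chi^{-1}[\mon_x(T^*_\Pi M)]$, while at $\hbar=0$ the leaves are points and the monodromy is trivial. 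From this the equivalence of local uniform discreteness is immediate. This is self-contained and precedes the double-explosion construction; by contrast, your ``if'' direction is a forward reference to Theorem~\ref{Integration thm}, which makes the logical order awkward here.

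Your ``only if'' direction contains real errors. The restriction $\Sigma_1=\Sigma\rvert_{\hbar=1}$ is a $(2n{+}1)$-dimensional subgroupoid, so it cannot be a symplectic groupoid as you claim. More seriously, contracting $\omega$ with the Hamiltonian vector field of $\t^*\hbar$ gives $X_{\t^*\hbar}\inner\omega=-d(\t^*\hbar)$, which is exact and restricts to zero on $\Sigma_1$; this is not a contact form and cannot be made into one. The repair is to drop the attempt to manufacture $\theta$ explicitly: since $\s^*\hbar=\t^*\hbar$ on an $\s$-connected symplectic groupoid (both pull back a Casimir), $\Sigma_1$ is a Lie subgroupoid whose Lie algebroid is $T^*\Hei M\rvert_{M\times\{1\}}\cong T^*_\Pi M$ by Lemma~\ref{Intertwine}, hence $T^*_\Pi M$ is integrable and Crainic--Zhu supplies the contact groupoid. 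That is a valid alternative to the paper's monodromy argument for this direction, but your written version does not arrive there.
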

\begin{proof}
Crainic and Fernandes \cite{c-f2} showed that a Poisson manifold is integrable if and only if its cotangent Lie algebroid is integrable. They also showed \cite{c-f1} that a Lie algebroid is integrable if and only if its  monodromy groups are locally uniformly discrete.  Crainic and Zhu \cite{c-z} showed that $(M,\Pi)$ is integrable to a contact groupoid if and only if $T^*_\Pi M$ is integrable.

For any Lie algebroid $A$ over $M$, denote the monodromy group of $A$ at $x\in M$ by $\mon_x(A)\subset A_x$. Monodromies are determined by maps of $S^2$ into orbits of $A$, so if $B\subset A$ is the full Lie subalgebroid over a submanifold containing $x$, then $\mon_x(A)=\mon_x(B)$.

In this case, $\hbar$ is constant along the orbits of $T^*\Hei M$ (which are  the symplectic leaves) so the restriction of $T^*\Hei M$ to a fixed  value of $\hbar$ is a full Lie subalgebroid, and Lemma~\ref{Intertwine} shows that this is isomorphic to $T^*_\Pi M$ for $\hbar\neq0$. To be precise, Lemma~\ref{intertwine} implies that for any $x\in M$ and $\hbar\neq0$, the monodromy group at $(x,\hbar$) is the inverse image by $\chi$: 
\begin{align*}
\mon_{(x,\hbar)}(T^*\Hei M) &= T^*_{(x,\hbar)}\Hei M \cap \chi^{-1}[\mon_x(T^*_\Pi M)] \\
&= \{\xi+a\,d\hbar \mid \xi\in T^*_xM,\ a\in\R,\  
(\hbar\xi,\hbar^2a)\in\mon_x(T^*_\Pi M)\}\ .
\end{align*}
On the other hand, the symplectic leaves of $\Hei(M,\Pi)$ over $\hbar=0$ are points, so 
\[
\mon_{(x,0)}(T^*\Hei M)=0\ .
\] 

Clearly, $\mon_{(x,\hbar)}(T^*\Hei M)$ is discrete for any $\hbar$ if and only if $\mon_x(T^*_\Pi M)$ is. Furthermore, the monodromies of $T^*_\Pi M$ can only be locally uniformly discrete if those of $T^*\Hei M$ are. Finally, if the monodromies of $T^*_\Pi M$ are locally uniformly discrete, then the monodromies of $T^*\Hei M$ expand away from $0$ as $\hbar\to0$, therefore they are also locally uniformly discrete.
\end{proof}

Since the objective is to integrate $\Hei(M,\Pi)$, we must assume that it is integrable, and so there exists a contact groupoid $(\jac,\theta)$ integrating $(M,\Pi)$ as a Jacobi manifold. 

Suppose that $(\Gamma,\omega_\Gamma)$ is a symplectic groupoid integrating $\Hei(M,\Pi)$, and suppose that the Lie algebroid homomorphism $\chi :T^*\Hei M \to T^*_\Pi M$ integrates to a groupoid homomorphism $\Chi:\Gamma\to\jac$ such that (like $\chi$) the restriction of $\Chi$ to any fixed $\hbar\neq0$ is an isomorphism.

This means that the $\hbar\neq0$ part of $\Gamma$ is isomorphic to $\jac\times\R_*$. This is a dense, open subgroupoid, so all we need now is to glue this together with a neighborhood of $\hbar=0$.

Let $\Exp_\jac:T^*_\Pi M \to \jac$ be some exponential map (Def.~\ref{Exponential}) that is well behaved on an open neighborhood $U\subset T^*_\Pi M$ of the $0$ section. Let $V := \chi^{-1}(U)$ and let $V_*\subset V$ be the $\hbar\neq0$ part.

Suppose that we can complete the commutative diagram
\[
\begin{tikzcd}
T^*\Hei M \arrow{r}{\chi} \arrow[dashed]{d}{\Exp_\Gamma} & T^*_\Pi M \arrow{d}{\Exp_\jac}\\
\Gamma \arrow{r}{\Chi} &\jac
\end{tikzcd}
\]
with an exponential map for $\Gamma$. The map $(\Exp_\jac\circ\chi,\hbar) : T^*\Hei M \to \jac\times\R$ is well behaved over $V_*$ (see Fig.~\ref{Domain}), so $\Exp_\Gamma$ is at least well behaved over $V_*$.

\begin{figure}
\begin{center}
\includegraphics{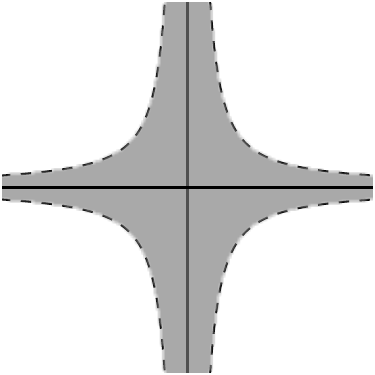}
\quad
\includegraphics{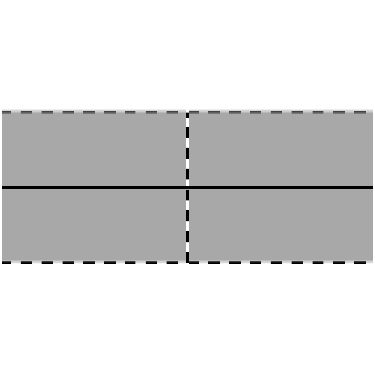}
\end{center}
\caption{Sketches of $V\subset T^*\Hei M$ with the $0$ section and the $\hbar=0$ part (left) and of its image $(\Exp_\jac\circ\chi,\hbar)(V) \subset \jac\times\R$ with the unit submanifold (right).\label{Domain}}
\end{figure}

$\Exp_\Gamma$ becomes better and better behaved as $\hbar\to0$. 
Over $\hbar=0$, $T^*\Hei M$ is a bundle of Heisenberg Lie algebras, so the restriction of $\Gamma$ should be a bundle of Heisenberg groups. Recall that the exponential map to a Heisenberg group is a diffeomorphism. This suggests that $\Exp_\Gamma$ is well behaved over all of $V$.

$V\subset T^*\Hei M$ is a neighborhood of the $\hbar=0$ part of $T^*\Hei M$ (see Fig.~\ref{Domain} again). If $\Exp_\Gamma V$ is a neighborhood of the $\hbar=0$ part of $\Gamma$, then we have a covering of $\Gamma$ by two open subsets. As a manifold, $\Gamma$ can be constructed as a union of $V$ and $\jac\times\R_*$, where $V_*\subset V$ is identified with a subset of $\jac\times\R_*$ by the map $(\Exp_\jac\circ\chi,\hbar)$. This is summarized in the push-forward diagram:
\begin{equation}
\label{tentative Gamma}
\begin{tikzcd}
V_* \arrow{r}{} \arrow{d}[swap]{(\Exp_\jac\circ \chi,\hbar)} &V \arrow[dashed]{d}{\Exp_\Gamma}\\
\jac\times\R_* \arrow[dashed]{r}{} & \Gamma \period
\end{tikzcd}
\end{equation}

As a set, $\Gamma$ is the disjoint union of $\jac\times\R_*$ with $T^*M\times\R$. To see the manifold structure, we look to coordinate charts.
Locally, a coordinate chart on $M$ and a local trivialization of $T^* M$ give a coordinate chart on $T^*\Hei M$ with four kinds of coordinates: $\hbar$, coordinates on $M$, coordinates in the fibers of $T^* M$, and the coefficient of $d\hbar$. This is glued together with $\jac\times\R_*$ using $\chi$, which gives an inhomogeneous rescaling: The coordinates on $M$ are not rescaled, the fiber coordinates are rescaled by $\hbar$, and the coefficient of $d\hbar$ is rescaled by $\hbar^2$.
In the terminology of Weinstein \cite{wei2}, this  is the double explosion of $\jac$ along $M$, in the direction of $T^*M$:
\[
\Gamma = \EE(\jac,M,T^*M)\ .
\]

To construct $\Gamma$ as a Lie groupoid, we need to smoothly extend the structure maps of $\jac\times\R_*\subset\Gamma$.

How about a symplectic form on $\Gamma$? The contact form $\theta$ on $\jac$ corresponds to the Poisson structure $\Pi$. If the Poisson structure is rescaled to $\hbar\Pi$, then the contact structure should be rescaled to $\hbar^{-1}\theta$. This suggests that $\hbar^{-1}\theta$ should be a symplectic potential on $\jac\times\R_*$. We shall see that this extends to a smooth form on $\Gamma$, and   $(\Gamma,-d[\hbar^{-1}\theta])$ is a symplectic groupoid integrating $\Hei(M,\Pi)$.

To prove this, it will be useful to have some general results about explosions.

\begin{remark}
Weinstein defined double explosions in order to construct a symplectic groupoid integrating the Heisenberg-Poisson manifold of a symplectic manifold, so it is not surprising that this construction is relevant to the more general case.
\end{remark}

\subsection{Explosions}
To begin, we need the category on which explosions are defined.
\begin{definition}
An \emph{explosive triple}  is a triple $(M,N,E)$, where $M$ is a (smooth) manifold, $N\subseteq M$ is a submanifold, and $E\subseteq \nu_N M = T M\rvert_N / TN$ a subbundle of the normal bundle. 
%\end{definition}
%\begin{definition}
A \emph{compatible map} $\phi : (M,N,E)\to (M',N',E')$ between explosive triples is a smooth map $\phi : M \to M'$ such that $\phi(N)\subseteq N'$ and $T\phi(E)\subseteq E'$.
\end{definition}

Weinstein \cite{wei2} defines the \emph{double explosion} $\EE(M,N,E)$ of $M$ along $N$ in the direction of $E$ to be a union of $M\times \R_*$ with the normal bundle, glued together using an inhomogeneous rescaling. 
As we shall see, a compatible map, $\phi : (M,N,E)\to (M',N',E')$, determines a smooth map $\EE\phi : \EE(M,N,E)\to \EE(M',N',E)$ whose restriction to $M\times\R_*\subset \EE(M,N,E)$ equals $\phi\times\id_{\R_*}$. This makes $\EE$ a functor.

This is most easily constructed in the case that $M$ is an open subset of a vector space such that $N$ and $E$ are the intersections of vector subspaces with $M$. 
\begin{definition}
An \emph{explosive chart} is an explosive triple $(M,N,E)$ such that $M\subseteq \R^n$ is an open subset of a vector space with coordinates $(x,y,z)$ (each letter denotes a vector) such that
\[
N = \{(x,y,z)\in M \mid y=z=0\}
\]
and $E$ is spanned by the vectors in the $y$ directions.
\end{definition}
\begin{remark}
$N$ can be empty.
\end{remark}
\begin{definition}
\label{Double explosion}
The \emph{double explosion} of an explosive chart, $(M,N,E)$,  is
\beq
\label{double explosion}
\EE(M,N,E) := \{(x,y,z,\hbar)\in\R^{n+1} \mid (x,\hbar y,\hbar^2 z) \in M\} 
\eeq
and the projection $\Pr_M:\EE(M,N,E)\to M$ is 
\beq
\label{projection}
\Pr_M(x,y,z,\hbar) = (x,\hbar y,\hbar^2 z) \ .
\eeq
\end{definition}
\begin{thm}
\label{Explosion}
Let $\mathsf U$ be the forgetful functor from explosive triples to manifolds. There exists a functor, $\EE$, (which I call \emph{explosion}) from the category of explosive triples and compatible maps to the category of smooth manifolds over $\R$, and a natural transformation, $\Pr:\EE\naturalto \mathsf  U$ such that:
\begin{enumerate}
\item
For any explosive chart, $\EE$ is as defined by eq.~\eqref{double explosion}, $\Pr$  is as defined by \eqref{projection}, and the projection to $\R$ is the coordinate, $\hbar$;
\item
\label{Trivial}
for a trivial explosive triple, $\EE(M,M,0) = M\times \R$ and $\Pr_M$ is the Cartesian projection, and for a map $\phi :(M,M,0)\to(M',M',0)$, $\EE\phi = \phi\times\id_\R$;
\item
\label{Projection}
for any explosive triple, $(M,N,E)$, the restriction of 
\[
(\Pr_M,\hbar) : \EE(M,N,E) \to M\times \R
\]
to $\hbar\neq0$ is a diffeomorphism to $M\times\R_*$. 
\end{enumerate}
\end{thm}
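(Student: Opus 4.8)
The plan is to build $\EE$ and $\Pr$ locally out of explosive charts, check functoriality there by a Hadamard-expansion argument, and then glue. The first step is a local normal form: every explosive triple $(M,N,E)$ admits an atlas by explosive charts, i.e. a cover of $M$ by coordinate charts $U_\alpha$ for which $(U_\alpha,N\cap U_\alpha,E|_{U_\alpha})$ is an explosive chart. Around a point of $N$ this is the usual adapted-coordinates construction — choose coordinates on $N$, extend them to coordinates on $M$ with $N=\{\text{transverse coordinates}=0\}$, then apply an $x$-dependent linear change of the transverse coordinates so that $E$ becomes the span of the first few of them along $N$, calling those $y$ and the rest $z$. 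Away from $N$ one takes ordinary charts with $y,z$ the empty tuple, so the restricted triple is $(U,U,0)$. On overlaps the coordinate change is a compatible diffeomorphism of explosive charts, since $N$ and $E$ are intrinsic.

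For explosive charts, $\EE$ and $\Pr$ are given by \eqref{double explosion} and \eqref{projection}. Given a compatible map $\phi:(M,N,E)\to(M',N',E')$ of explosive charts with components $(\phi^{x'},\phi^{y'},\phi^{z'})$, I define $\EE\phi$ to agree with $\phi\times\id_{\R_*}$ on $\hbar\neq0$, i.e.
\[
\EE\phi(x,y,z,\hbar)=\Bigl(\phi^{x'}(x,\hbar y,\hbar^2 z),\ \tfrac1\hbar\phi^{y'}(x,\hbar y,\hbar^2 z),\ \tfrac1{\hbar^2}\phi^{z'}(x,\hbar y,\hbar^2 z),\ \hbar\Bigr),
\]
and the point is that this extends smoothly across $\hbar=0$. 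The hypothesis $\phi(N)\subseteq N'$ says $\phi^{y'}$ and $\phi^{z'}$ vanish on $\{y=z=0\}$; the hypothesis $T\phi(E)\subseteq E'$, read as a condition on the induced map of normal bundles, says in addition that every first-order $y$-derivative of $\phi^{z'}$ vanishes there. Hadamard's lemma then gives $\phi^{y'}(x,u,v)=\sum_i u_i g_i(x,u,v)+\sum_k v_k h_k(x,u,v)$ and, applied a second time to the $y$-coefficients of $\phi^{z'}$, $\phi^{z'}(x,u,v)=\sum_{i,j}u_iu_j\,p_{ij}(x,u,v)+\sum_{i,k}u_iv_k\,q_{ik}(x,u,v)+\sum_k v_k\,r_k(x,u,v)$ with smooth coefficients. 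Substituting $u=\hbar y$, $v=\hbar^2 z$ produces an overall factor $\hbar$ in $\phi^{y'}$ and $\hbar^2$ in $\phi^{z'}$, so the divisions are legitimate and $\EE\phi$ is smooth; it visibly projects to $\hbar$ and satisfies $\Pr_{M'}\circ\EE\phi=\phi\circ\Pr_M$ on $\hbar\neq0$, hence everywhere by continuity. Since $\EE\phi$ coincides with $\phi\times\id_{\R_*}$ on the dense open set $\hbar\neq0$, the identities $\EE\id=\id$ and $\EE(\psi\circ\phi)=\EE\psi\circ\EE\phi$ hold on $\hbar\neq0$ and therefore everywhere.

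Next I glue. For a general explosive triple, choose an atlas of explosive charts as above and let $\EE(M,N,E)$ be the manifold obtained by gluing the spaces $\EE(U_\alpha,\dots)$ along the (compatible, invertible) maps $\EE$ of the transition functions; the cocycle condition is immediate from the functoriality of Step 2, and independence of the chosen atlas follows by passing to a common refinement. Hausdorffness and second countability are inherited from $M$ and $\R$, via $(\Pr_M,\hbar)$ on $\hbar\neq0$ and via $\Pr_M$ to $N$ on $\hbar=0$. The locally defined maps $\Pr_M$ agree on overlaps by the naturality from Step 2, yielding a global $\Pr_M:\EE(M,N,E)\to M$, and a compatible map glues to a smooth map over $\R$, natural with respect to $\Pr$. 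Property~(1) holds by construction. Property~(2) holds because the atlas of a trivial triple may be taken with $y,z$ empty, so each $\EE(U_\alpha,U_\alpha,0)=U_\alpha\times\R$ and the glued object is $M\times\R$ with $\Pr_M$ the Cartesian projection; and $\EE\phi$ and $\phi\times\id_\R$, both smooth and equal on $\hbar\neq0$, coincide. Property~(3) holds because on $\hbar\neq0$ the gluing data for $\EE(M,N,E)$ is exactly the transition functions of $M$ times $\id_{\R_*}$, so $(\Pr_M,\hbar)$ restricts there to a diffeomorphism onto $M\times\R_*$.

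The main obstacle is Step 2: identifying precisely which partial derivatives of $\phi$ are forced to vanish on $N$ by the two clauses in the definition of a compatible map, and seeing that this is exactly enough — a simple vanishing for the $y$-block and a double vanishing (value and first $y$-derivatives) for the $z$-block — to absorb the factors $\hbar^{-1}$ and $\hbar^{-2}$ after the iterated Hadamard expansion. Everything downstream of that is a routine atlas-gluing argument.
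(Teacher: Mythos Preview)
Your overall strategy matches the paper's: define $\EE$ and $\Pr$ on explosive charts by the explicit formulae, show that compatible maps explode to smooth maps by controlling the vanishing order of the $y'$- and $z'$-components along $N$, and then glue via an atlas. Your iterated Hadamard-lemma computation is a clean alternative to the paper's L'H\^opital/Taylor argument, and your density argument for functoriality is equivalent to the paper's uniqueness-from-naturality.

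There is, however, one concrete error in the atlas step. For a chart $U$ disjoint from $N$, the restriction of the ambient triple is $(U,\emptyset,\text{nothing})$, not $(U,U,0)$; these are genuinely different explosive triples. If you glue in pieces of the form $\EE(U,U,0)=U\times\R$ over $M\setminus N$, you introduce spurious points at $\hbar=0$ away from $N$, whereas the correct $\hbar=0$ locus of $\EE(M,N,E)$ is the graded normal bundle, which sits over $N$ alone. Worse, on an overlap $V=U_\alpha\cap U_\beta$ (with $U_\alpha$ an adapted chart meeting $N$ and $U_\beta$ one of your charts away from $N$), the transition runs between $(V,\emptyset,\emptyset)$ and $(V,V,0)$; this is compatible in only one direction, since $\id(V)\not\subseteq\emptyset$, so you cannot explode the inverse transition and the cocycle gluing is not even defined.

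The fix is easy: away from $N$, use explosive charts whose domain does not meet $\{y=z=0\}$ (for instance, declare all coordinates to be $y$-coordinates and translate so that the origin lies outside the chart). Then the chart's $N'$ is empty, its explosion lies entirely in $\hbar\neq0$, and every transition map is a compatible diffeomorphism of explosive charts in both directions. With that correction your argument goes through and is essentially the paper's proof.
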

\begin{proof}
I begin by proving this for the subcategory of explosive charts.

First, we need to define $\EE$ on morphisms. Let $\phi:(M,N,E)\to(M',N',E')$ be a compatible map of explosive charts. Naturality of $\Pr$ means that 
\beq
\label{natural}
\EE\phi\circ\Pr_M = \Pr_{M'}\circ \phi \ .
\eeq
That $\EE\phi$ is a map over $\R$ means $\hbar\circ\EE\phi = \hbar$. Putting these conditions together implies that $\EE\phi:\EE(M,N,E)\to \EE(M',N',E')$  must be a smooth map such that $\EE\phi(x,y,z,\hbar)=(x',y',z',\hbar)$, where 
\beq
\label{exploded map}
(x',\hbar y',\hbar^2 z') = \phi(x,\hbar y,\hbar^2 z) .
\eeq

Denote the components of $\phi$ as $\phi=(\phi^x,\phi^y,\phi^z)$ and denote derivatives by subscripts.  The condition that $\phi$ maps $N$ to $N'$ is simply $\phi^y(x,0,0)=\phi^z(x,0,0)=0$. This implies that the derivatives $\phi^y_x$ and $\phi^z_x$ similarly vanish. The condition that the induced map on the normal bundle maps $E$ to $E'$ means that also $\phi^z_y(x,0,0)=0$. In other words, the Jacobian matrix along $N$ reduces to the block triangular form:
\beq
\label{Tphi}
T\phi(x,0,0)=
\begin{pmatrix}
\phi^x_x & \phi^x_y & \phi^x_z \\
0 & \phi^y_y & \phi^y_z \\
0 & 0 & \phi^z_z 
\end{pmatrix} .
\eeq

L'H\^opital's rule shows that for $\EE\phi$ to be continuous, at $\hbar=0$ it must equal 
\beq
\label{Ephi}
\EE\phi(x,y,z,0) = (\phi^x,y\phi^y_y,\tfrac12 y^2\phi^z_{yy} + z\phi^z_z,0)\ ,
\eeq
where the functions on the right are all evaluated at $(x,0,0)$ and contraction is implied wherever possible between a coordinate and the corresponding derivative. Taylor's theorem shows that $\EE\phi$ is smooth, and so it is well defined.

This defines $\EE$ for compatible maps of explosive charts. $\Pr$ is natural by construction, and $\EE$ must be a functor because it is uniquely defined on maps by the naturality condition \eqref{natural}. Properties \ref{Trivial} and \ref{Projection} are immediate from the definitions of $\EE$ and $\Pr$ on explosive charts. This proves the theorem for the full subcategory of explosive charts.

In order to extend this to the whole category of explosive triples, observe that  constructing a manifold from a coordinate atlas really means expressing it as a colimit of coordinate charts and injective local diffeomorphisms. 

If a compatible map of explosive charts is just the inclusion of an open subset, then it is obvious from \eqref{exploded map} that the explosion is also an inclusion. If a compatible map of explosive charts has a compatible inverse, then by functoriality, its explosion is a diffeomorphism. Any injective local diffeomorphism of explosive charts is a composition of such maps, therefore its explosion is an injective local diffeomorphism.

If $(M,N,E)$ is an explosive triple, then there exists an atlas for $M$ of explosive charts. Applying $\EE$ to the atlas gives an atlas, which has a colimit. Define $\EE(M,N,E)$ as some colimit of this atlas. The naturality properties of colimits imply that $\EE$ extends to a functor on the category of explosive triples.
\end{proof}

\begin{remark}
I am using the imprecise notation $\Pr_M$, because the correct notation $\Pr_{(M,N,E)}$ is too cumbersome.
\end{remark}

\begin{remark}
The map $(\Pr_M,\hbar):\EE(M,N,E)\to M\times\R$ identifies a dense subset of $\EE(M,N,E)$ with $M\times\R_*$. The natural projection, $\Pr_M$ is just the smooth extension of the Cartesian projection  $M\times\R_*\to M$. The explosion $\EE\phi$ is just the completion of the Cartesian product of $\phi$ with the identity map on $\R_*$. The projection $\hbar:\EE(M,N,E)\to\R$ is just the explosion of the unique map from $M$ to a point.
\end{remark}

\begin{remark}
$N\subseteq \nu_N M$ can be thought of as a (very short) filtration of the normal bundle $\nu_N M$. Indeed, it also gives a filtration of $TM\rvert_N$.
\end{remark}
\begin{cor}
\label{h0}
The restriction of $\EE(M,N,E)$ to $\hbar=0$ is the graded vector bundle (over $N$) associated to this filtration of the normal bundle. The restriction of $\EE\phi$ to $\hbar=0$ is given in explosive charts by eq.~\eqref{Ephi}.
\end{cor}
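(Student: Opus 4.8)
### Proof proposal

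The plan is to reduce everything to the local model of an explosive chart, where the restriction to $\hbar=0$ can be read off directly from the explicit formula \eqref{double explosion}, and then check that the gluing maps respect the claimed structure so the local description patches to a global one. Concretely: for an explosive chart $(M,N,E)$ with coordinates $(x,y,z)$, setting $\hbar=0$ in \eqref{double explosion} gives $\{(x,y,z,0)\mid (x,0,0)\in M\}$, which is the product of $N=\{(x,0,0)\in M\}$ with the fiber coordinates $(y,z)$. First I would observe that the filtration in question is $TN\subseteq (TN\oplus E\text{-directions})\subseteq TM\rvert_N$ — i.e., the three-step filtration $0\subseteq E\subseteq \nu_N M$ on the normal bundle, pulled back from (or equivalently inducing) a filtration of $TM\rvert_N$ — and that its associated graded bundle is $E\oplus(\nu_N M/E)$, with $E$ coordinatized by $y$ and $\nu_N M/E$ by $z$ in the chart. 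So in an explosive chart the $\hbar=0$ locus of $\EE(M,N,E)$ is visibly $N\times(E\oplus \nu_N M/E)$, the total space of the graded bundle.

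The second step is to verify that this identification is chart-independent, i.e., that it glues. For this I would invoke eq.~\eqref{Ephi} from Theorem~\ref{Explosion}: an injective local diffeomorphism of explosive charts $\phi$ has $\EE\phi$ restricted to $\hbar=0$ given by $(x,y,z,0)\mapsto(\phi^x, y\phi^y_y, \tfrac12 y^2\phi^z_{yy}+z\phi^z_z, 0)$, all derivatives evaluated at $(x,0,0)$. On the $y$-coordinate this is the linear map $y\mapsto \phi^y_y(x,0,0)\,y$, which is exactly the transition function for $E$ (the induced map on $E\subseteq\nu_N M$); on the $z$-coordinate, modulo the $y$-directions it is $z\mapsto \phi^z_z(x,0,0)\,z$, which is exactly the transition function for the quotient $\nu_N M/E$. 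The extra term $\tfrac12 y^2\phi^z_{yy}$ is precisely the affine correction one expects in the second graded piece — it depends quadratically on the lower-filtration coordinate $y$ and so is consistent with the bundle structure of the associated graded (it does not spoil linearity in $z$, and the $z$-part of the cocycle is well defined). Hence the transition maps of the $\hbar=0$ locus of $\EE(M,N,E)$ agree with those of the graded vector bundle $E\oplus\nu_N M/E$ over $N$, and since $\EE$ is built as the colimit over an explosive atlas (as in the proof of Theorem~\ref{Explosion}), the two glue to the same manifold over $N$. The second sentence of the corollary is then immediate, since eq.~\eqref{Ephi} \emph{is} the restriction of $\EE\phi$ to $\hbar=0$ by construction.

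The main obstacle I anticipate is bookkeeping rather than anything deep: one must be careful about what ``the filtration of the normal bundle'' means and that its associated graded is genuinely $E\oplus\nu_N M/E$ with the transition cocycle just described — in particular that the inhomogeneous term $\tfrac12 y^2\phi^z_{yy}$ is consistent with, and indeed forced by, the grading (it is the quadratic-in-$y$ piece that an affine bundle modeled on the graded pieces would carry, and it vanishes on $N$ and is linear-plus-correction in the appropriate way). A secondary point to state cleanly is that $E\subseteq\nu_N M$ should be read as the two-step filtration $0\subsetneq E\subseteq\nu_N M$ (equivalently a filtration $TN\subseteq \pi^{-1}(E)\subseteq TM\rvert_N$ of $TM\rvert_N$, where $\pi:TM\rvert_N\to\nu_N M$), so that ``graded vector bundle associated to this filtration'' is unambiguous; once that is pinned down, the chart computation above completes the proof.
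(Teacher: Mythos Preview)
Your proof is correct and is exactly the argument the paper leaves implicit --- the corollary is stated without proof, as an immediate consequence of the explicit formulas \eqref{double explosion} and \eqref{Ephi} established in the proof of Theorem~\ref{Explosion}. Your identification of the $\hbar=0$ locus chart-by-chart, together with the check that the transition maps \eqref{Ephi} are compatible with the claimed graded structure so that the local pictures glue, is the natural way to make this precise.

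Your flagged concern about the inhomogeneous term $\tfrac12 y^2\phi^z_{yy}$ is legitimate and worth keeping in view: its presence means the $\hbar=0$ fiber is not literally the direct-sum vector bundle $E\oplus(\nu_N M/E)$ with linear transition functions, but rather a graded bundle in the sense of weight-homogeneous fiber coordinates (with $y$ of weight~$1$ and $z$ of weight~$2$), whose linearization is that direct sum. The paper's phrase ``graded vector bundle associated to this filtration'' should be read in this looser sense. This is a matter of terminology in the statement rather than a gap in your argument.
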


\begin{definition}
A compatible map $\phi : (M,N,E) \to (M',N',E')$ is an \emph{explosive submersion} if $\phi:M\to M'$ is a submersion, the restriction $\phi\rvert_N : N\to N'$ is a submersion, and the induced map from $E$ to $E'$ is fiberwise surjective.

A compatible map $\phi : (M,N,E) \to (M',N',E')$ is an \emph{explosive immersion} if $\phi$ is an immersion, the induced map on normal bundles is fiberwise injective, and the induced map from $\nu_N M/E$ to $\nu_{N'}M'/E'$ is fiberwise injective.
\end{definition}

\begin{lem}
If $\phi$ is an explosive submersion, then $\EE\phi$ is a submersion.
If $\phi$ is an explosive immersion, then $\EE\phi$ is an immersion.
If $\phi$ is an explosive immersion and is injective, then $\EE\phi$ is also injective.
\end{lem}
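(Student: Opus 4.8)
The plan is to reduce to explosive charts, where $\EE\phi$ is given explicitly by \eqref{exploded map} on $\hbar\neq0$ and by \eqref{Ephi} at $\hbar=0$, and then check the rank and injectivity statements directly from these formulas. Since being a submersion, an immersion, or injective are local (or, for injectivity, at least checkable chart-by-chart together with the global structure), and since $\EE$ of an injective local diffeomorphism is again an injective local diffeomorphism (established in the proof of Theorem~\ref{Explosion}), it suffices to prove each claim for a compatible map of explosive charts $\phi : (M,N,E)\to(M',N',E')$ with coordinates $(x,y,z)$ and $(x',y',z')$.

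The first observation is that over $\hbar\neq0$, $\EE\phi$ is conjugate to $\phi\times\id_{\R_*}$ by the diffeomorphism $(\Pr_M,\hbar)$ of property~\ref{Projection}; hence wherever $\phi$ is a submersion (resp.\ immersion, resp.\ injective), so is $\EE\phi$ restricted to $\hbar\neq0$. So the entire content is at $\hbar=0$, where by Corollary~\ref{h0} the map $\EE\phi$ is the map of graded normal bundles induced by $\phi$, given in coordinates by \eqref{Ephi}: $(x,y,z)\mapsto(\phi^x_x x+\cdots,\; y\phi^y_y,\; \tfrac12 y^2\phi^z_{yy}+z\phi^z_z)$ — I would compute the Jacobian of $\EE\phi$ at a point $(x,y,z,0)$, which is block-structured with diagonal blocks coming from $\phi^x_x$, $\phi^y_y$, $\phi^z_z$ (the blocks on the block-triangular \eqref{Tphi}), together with a $\partial_\hbar$-column that I expect contributes a further row/column involving the next Taylor coefficients of $\phi$. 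For the submersion case: $\phi$ a submersion forces $\phi^x_x$, $\phi^y_y$, $\phi^z_z$ to have appropriate surjectivity (the three extra hypotheses of ``explosive submersion'' — that $\phi|_N$ is a submersion and $E\to E'$ and $\nu_N M/E\to\nu_{N'}M'/E'$ are surjective — are exactly what pin down surjectivity of the three diagonal blocks), so the block-lower-triangular-ish Jacobian of $\EE\phi$ at $\hbar=0$ is surjective; include the $\hbar$-direction to account for the coordinate on $\R$. For the immersion case: ``explosive immersion'' gives injectivity of $\phi^x_x$ (since $\phi|_N$ immersion), of the induced map on normal bundles (injectivity on the $(y,z)$-block), and of $\nu_N M/E\to\nu_{N'}M'/E'$; combined with the block structure this yields an injective Jacobian for $\EE\phi$ at $\hbar=0$.

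For the injectivity statement, I would argue that if $\phi$ is an explosive immersion and globally injective, then $\EE\phi$ is injective. On $\hbar\neq0$ this follows from conjugacy to $\phi\times\id_{\R_*}$. At $\hbar=0$, injectivity of $\EE\phi$ on the zero fiber is the statement that the induced map of graded normal bundles is injective, which follows from the two injectivity conditions in the definition of explosive immersion (fiberwise injectivity on $\nu_N M$, handling the $y$-part via $E$ and the $z$-part via $\nu_N M/E$, plus injectivity of $\phi|_N$ on the base). Finally, a point with $\hbar=0$ cannot map to the same point as one with $\hbar\neq0$ since $\hbar\circ\EE\phi=\hbar$; so the two cases do not interfere, and global injectivity follows.

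The main obstacle I anticipate is bookkeeping at $\hbar=0$: carefully reading off, from the explicit formula \eqref{Ephi}, precisely which derivatives of $\phi$ appear in the Jacobian of $\EE\phi$ and verifying that the three ``explosive'' hypotheses (on $\phi|_N$, on $E\to E'$, and on the quotient $\nu_N M/E\to\nu_{N'}M'/E'$) translate exactly into the needed rank conditions on the corresponding blocks — in particular making sure the cross term $\tfrac12 y^2\phi^z_{yy}$ does not spoil injectivity or surjectivity (it lies in the $z'$-block and is dominated by the $\phi^z_z$ diagonal entry there). Once the chart computation is organized, passing to general explosive triples is routine via the colimit/atlas argument already used for Theorem~\ref{Explosion}.
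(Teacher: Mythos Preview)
Your proposal is correct and follows essentially the same route as the paper: reduce to explosive charts, dispose of $\hbar\neq0$ by conjugacy with $\phi\times\id_{\R_*}$, and at $\hbar=0$ exploit the block-triangular Jacobian of $\EE\phi$ (with diagonal blocks $\phi^x_x$, $\phi^y_y$, $\phi^z_z$ and the extra $\hbar$-column) to read off surjectivity or injectivity from the corresponding properties of the blocks; injectivity of $\EE\phi$ at $\hbar=0$ is then checked directly from formula~\eqref{Ephi}. One small correction: in the definition of explosive submersion the third hypothesis is that $\phi$ itself is a submersion, not that $\nu_N M/E\to\nu_{N'}M'/E'$ is fiberwise surjective --- the latter is what you actually need for $\phi^z_z$ to be onto, and the paper derives it from the former via the block-triangular form~\eqref{Tphi} of $T\phi(x,0,0)$, so your argument goes through once you insert that one-line deduction.
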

\begin{proof}
These are obvious over the $\hbar\neq0$ regions, so we just need to check what happens at $\hbar=0$. It is sufficient to prove these claims for explosive charts. All derivatives of $\phi$ in this proof are evaluated at $(x,0,0)$.

Further application of L'H\^opital's rule shows that the Jacobian matrix of $\EE\phi$ at $\hbar=0$ is
\beq
\label{TEphi}
T\EE\phi(x,y,z,0) = 
\begin{pmatrix}
\phi^x_x & 0 & 0 & y\phi^x_y \\
y \phi^y_{xy} & \phi^y_y & 0 & \frac12y^2\phi^y_{yy}+z\phi^y_z \\
\frac12y^2\phi^z_{xyy}+z\phi^z_{xz} & y\phi^z_{yy} & \phi^z_z & \frac16 y^3\phi^z_{yyy} + yz \phi^z_{yz} \\
0 & 0 & 0 & 1
\end{pmatrix}\ .
\eeq 
If $w = T\EE\phi(x,y,z,0)v$, then (with a slight reordering of the components)
\beq
\label{wvrelation}
\begin{pmatrix}
w^\hbar \\ w^x \\ w^y \\ w^z
\end{pmatrix}
=
\begin{pmatrix}
1 & 0 & 0 & 0 \\
y\phi^x_y  &\phi^x_x & 0 & 0\\
\frac12y^2\phi^y_{yy}+z\phi^y_z  & y \phi^y_{xy} & \phi^y_y & 0 \\
 \frac16 y^3\phi^z_{yyy} + yz \phi^z_{yz} &\frac12y^2\phi^z_{xyy}+z\phi^z_{xz} & y\phi^z_{yy} & \phi^z_z 
\end{pmatrix}
\begin{pmatrix}
v^\hbar \\ v^x \\ v^y \\ v^z
\end{pmatrix}
\eeq

First, suppose that $\phi$ is an explosive submersion. This means, in particular, that the Jacobian $T\phi(x,0,0)$ (given by the matrix \eqref{Tphi}) is surjective. In particular, any value of the $z$-components occurs in the image, so $\phi^z_z$ must be surjective.
The restriction $\phi\rvert_N$ is given by $\phi^x(x,0,0)$, so the condition that this is a submersion means precisely that the matrix $\phi^x_x$ is surjective.
The induced map from $E$ to $E'$ is the matrix $\phi^y_y$, therefore this is surjective.
In other words, these conditions on $\phi$ imply that the block diagonal components of $T\EE\phi(x,y,z,0)$ in \eqref{TEphi} are each surjective. 

To show that the whole matrix is surjective, we need to show that for any $w$, eq.~\eqref{wvrelation} has a solution for $v$. Because the matrix is block triangular, we can iteratively solve for $v^\hbar$, $v^x$, $v^y$, and $v^z$, using $v^\hbar=w^\hbar$ and the surjectivity of $\phi^x_x$, $\phi^y_y$, and $\phi^z_z$.

Now, suppose instead that $\phi$ is an explosive immersion. In particular, $\phi\rvert_N$ is an immersion, and so $\phi^x_x$ is injective. The induced map on normal bundles is given by the matrix
\[
\begin{pmatrix}
\phi^y_y & \phi^y_z \\
0 & \phi^z_z
\end{pmatrix} ;
\]
that this is injective implies that $\phi^y_y$ is injective. The induced map on the quotients by $E$ and $E'$ is given by $\phi^z_z$, therefore this is an injective matrix, so all of the block diagonal components of $T\EE\phi(x,y,z,0)$ \eqref{TEphi} are injective.

If $T\EE\phi(x,y,z,0)v=0$, then $\phi^z_zv^z=0 \implies v^z=0 \implies \phi^y_y v^y =0 \implies v^y=0 \implies \phi^x_x v^x=0 \implies v^\hbar=0$, so $v=0$.
Therefore $T\EE\phi(x,y,z,0)$ is injective and $\EE\phi$ is an immersion.

Now suppose that $\phi$ is an injective explosive immersion. 
Let $(x,y,z,0)$ and $(x',y',z',0)$ be two points such that,
\beq
\label{injective}
\EE\phi(x,y,z,0)=\EE\phi(x',y',z',0) \ .
\eeq
$\EE\phi$ at these points is given explicitly by \eqref{Ephi}.
Firstly, the $x$-component of eq.~\eqref{injective} states that $\phi^x(x,0,0)=\phi^x(x',0,0)$; injectivity of $\phi$ implies that $x=x'$. Secondly, the $y$-component states that $y\phi^y_y=y'\phi^y_y$, but injectivity of $\phi^y_y$ implies that $y=y'$. Finally, the $z$-component states that 
\[
\tfrac12 y^2\phi^z_{yy} + z\phi^z_z = \tfrac12 y^2\phi^z_{yy} + z'\phi^z_z 
\]
thus $(z-z')\phi^z_z=0$, and injectivity of $\phi^z_z$ implies that $z=z'$. Therefore, $(x,y,z,0)=(x',y',z',0)$ and so $\EE\phi$ is injective.
\end{proof}

\subsubsection{Exploding forms}
In order to construct the symplectic potential in the next section, it will be useful to construct differential forms on explosions.
\begin{definition}
Given an explosive triple, $(M,N,E)$, let $\Omegan(M,N,E)\subset\Omega^1(M)$ be the space of $1$-forms that are normal to $N$ and $E$ along $N$.
\end{definition}
It is easy to check that the pullback of such a form by a compatible map also satisfies this condition, therefore $\Omegan$ is a contravariant functor from the category of explosive triples to the category of vector spaces.
\begin{lem}
\label{Exploding forms}
There exists a unique natural tranformation $\EA  : \Omegan \naturalto \Omega^1\circ\EE$, such that for any $\theta\in\Omegan(M,N,E)$, $\hbar\EA_M \theta = \Pr_M^*\theta$.
\end{lem}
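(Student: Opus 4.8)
The plan is to work locally in explosive charts, where the claim becomes an explicit computation, and then to check that the local construction is natural enough to glue. So first I would fix an explosive chart $(M,N,E)$ with coordinates $(x,y,z)$ as in the definition, and take $\theta\in\Omegan(M,N,E)$. Write $\theta = \sum a_i\,dx^i + \sum b_j\,dy^j + \sum c_k\,dz^k$ with smooth coefficient functions on $M$. The condition that $\theta$ is normal to $N$ and to $E$ along $N$ says precisely that $a_i(x,0,0)=0$ and $b_j(x,0,0)=0$; the $c_k$ are unconstrained. The requirement $\hbar\,\EA_M\theta = \Pr_M^*\theta$ forces, using $\Pr_M(x,y,z,\hbar)=(x,\hbar y,\hbar^2 z)$ and hence $\Pr_M^*dx^i = dx^i$, $\Pr_M^*dy^j = \hbar\,dy^j + y^j\,d\hbar$, $\Pr_M^*dz^k = \hbar^2\,dz^k + 2\hbar z^k\,d\hbar$, the formula
\[
\EA_M\theta = \sum \frac{a_i(x,\hbar y,\hbar^2 z)}{\hbar}\,dx^i + \sum b_j(x,\hbar y,\hbar^2 z)\,dy^j + \sum \frac{y^j b_j(x,\hbar y,\hbar^2 z)}{\hbar}\,d\hbar + \sum \hbar\, c_k(x,\hbar y,\hbar^2 z)\,dz^k + \sum 2 z^k c_k(x,\hbar y,\hbar^2 z)\,d\hbar\ ,
\]
so uniqueness is automatic: the relation $\hbar\,\EA_M\theta=\Pr_M^*\theta$ determines $\EA_M\theta$ on the dense set $\hbar\neq0$, and a $1$-form extending continuously is unique.

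The content is existence, i.e.\ that this expression extends smoothly across $\hbar=0$. The $dx^i$ coefficient is $a_i(x,\hbar y,\hbar^2 z)/\hbar$, and since $a_i$ vanishes on $N=\{y=z=0\}$, Hadamard's lemma (Taylor's theorem with integral remainder in the $y,z$ variables) writes $a_i(x,u,v) = \sum_j u^j\,\alpha_{ij}(x,u,v) + \sum_k v^k\,\beta_{ik}(x,u,v)$ with smooth $\alpha,\beta$; substituting $u=\hbar y$, $v=\hbar^2 z$ gives $a_i(x,\hbar y,\hbar^2 z) = \hbar\big(\sum_j y^j\alpha_{ij}(x,\hbar y,\hbar^2 z) + \hbar \sum_k z^k\beta_{ik}(x,\hbar y,\hbar^2 z)\big)$, so the quotient by $\hbar$ is manifestly smooth. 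The same argument handles the $d\hbar$ coefficient $\sum_j y^j b_j(x,\hbar y,\hbar^2 z)/\hbar$, using that $b_j$ also vanishes on $N$. The remaining coefficients are already smooth. Hence $\EA_M\theta\in\Omega^1(\EE(M,N,E))$ is well defined on each explosive chart.

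For naturality I would check that for a compatible map $\phi:(M,N,E)\to(M',N',E')$ of explosive charts, $(\EE\phi)^*(\EA_{M'}\theta') = \EA_M(\phi^*\theta')$. On the dense region $\hbar\neq0$ this follows at once from the defining relation and naturality of $\Pr$ (eq.~\eqref{natural}): $\hbar\,(\EE\phi)^*\EA_{M'}\theta' = (\EE\phi)^*\Pr_{M'}^*\theta' = \Pr_M^*\phi^*\theta' = \hbar\,\EA_M(\phi^*\theta')$, and two smooth forms agreeing on a dense set agree. In particular $\EA$ is compatible with inclusions of open subsets and with diffeomorphisms of charts, so it descends to a natural transformation on the whole category of explosive triples by the same colimit-of-atlas argument used in the proof of Theorem~\ref{Explosion}: choose an explosive atlas for $(M,N,E)$, apply $\EA$ chartwise, and glue, the overlap compatibility being exactly the naturality just verified. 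Uniqueness globally follows from uniqueness on a single chart together with the fact that $(\Pr_M,\hbar)$ identifies a dense subset of $\EE(M,N,E)$ with $M\times\R_*$.

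\textbf{Anticipated main obstacle.} The only real work is the smoothness-at-$\hbar=0$ step, and within it the genuinely load-bearing point is that the hypothesis ``normal to $N$ \emph{and} $E$'' is exactly what is needed: one factor of $\hbar$ is available from $\Pr_M^*$ absorbing a $dy$ or contributing to $d\hbar$, but the $dx^i$-coefficient $a_i/\hbar$ and the $y^j b_j/\hbar$ part of the $d\hbar$-coefficient each need an \emph{extra} vanishing, which is supplied by $a_i,b_j$ vanishing along $N$; no condition beyond vanishing on $N$ (in particular nothing extra about $E$) is needed for those two, while the $E$-normality is what keeps the construction functorial/consistent with how $\EE$ treats the $y$-directions versus $z$-directions. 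I would present the Hadamard-lemma computation carefully for one representative coefficient and remark that the others are identical or trivial.
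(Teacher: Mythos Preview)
Your proposal is correct and follows essentially the same route as the paper: work in an explosive chart, show $\Pr_M^*\theta$ is divisible by $\hbar$, define $\EA_M\theta$ as the quotient, deduce naturality from naturality of $\Pr$ on the dense set $\hbar\neq0$, and glue via the atlas argument. The paper's version of the divisibility step is slightly slicker --- it simply observes that each term of $\Pr_M^*\theta$ vanishes at $\hbar=0$ and then invokes Hadamard in the single variable $\hbar$ --- whereas you compute the quotient explicitly and apply Hadamard in the $(y,z)$ variables; both are fine.

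One clarification on your ``anticipated obstacle'' paragraph: the condition ``normal to $E$ along $N$'' is precisely $b_j(x,0,0)=0$, and this is exactly what makes the $y^j b_j/\hbar$ term smooth; it is not an extra hypothesis used only for functoriality. The two normality conditions split cleanly as $a_i(x,0,0)=0$ (normal to $N$) and $b_j(x,0,0)=0$ (normal to $E$), and both are consumed directly in the smoothness argument.
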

\begin{proof}
First, consider the subcategory of explosive charts and work with explicit coordinates. Denote components by subscripts. 
For $\theta = \theta_xdx+\theta_ydy+\theta_zdz$, $\Pr_M^*\theta$ is explicitly
\begin{multline*}
(\Pr_M^*\theta)(x,y,z,\hbar) = \\ \theta_x(x,\hbar y,\hbar^2 z) dx + \theta_y(x,\hbar y,\hbar^2 z) d(\hbar y) + \theta_z(x,\hbar y,\hbar^2 z) d(\hbar^2z) \ .
\end{multline*}
The first two terms vanish at $\hbar=0$ because $\theta\in\Omegan(M,N,E)$ means precisely that $\theta_x(x,0,0)=\theta_y(x,0,0)=0$. 
The last term vanishes at $\hbar=0$ because $d(\hbar^2 z)=\hbar^2dz+2\hbar\, z\,d\hbar$. This shows that $\Pr_M^*\theta$ vanishes at $\hbar=0$ and so must be a multiple of $\hbar$, therefore there exists a unique $\EA_M\theta$ such that $\hbar\EA_M\theta = \Pr_M^*\theta$.

Naturality of $\Pr$ means that for any compatible map $\phi:(M,N,E)\to (M',N',E')$, there is a commutative diagram,
\[
\begin{tikzcd}
\EE(M,N,E) \arrow{r}{\EE\phi} \arrow{d}{\Pr_{M}} &\EE(M',N',E')\arrow{d}{\Pr_{M'}} \\
M \arrow{r}{\phi} &M' \period
\end{tikzcd}
\] 
Pulling back some $\theta\in\Omegan(M',N',E')$ along this diagram gives that
\[
\hbar\, (\EE\phi)^*(\EA_{M'} \theta) = (\EE\phi)^*\Pr_{M'}^*\theta \stackrel!= \Pr_{M}^*\phi^*\theta = \hbar\EA_{M}(\phi^*\theta) ,
\] 
and therefore $(\EE\phi)^*\EA_{M'} \theta = \EA_{M}(\phi^*\theta)$, i.e., $\EA$ is natural. This naturality implies that the construction is compatible with gluing in an atlas, and so it extends to the full category of explosive triples.
\end{proof}

\subsubsection{Simple explosions}
Weinstein also defined the simple explosion of a manifold along a submanifold. These had also been previously constructed in \cite{h-s}.
\begin{definition}
An \emph{explosive pair} is a pair $(M,N)$, where $M$ is a manifold and $N\subseteq M$ is a submanifold; this is identified  with the explosive triple $(M,N,E)$, where $E=\nu_N M$ is the entire normal bundle. The \emph{simple explosion} of an explosive pair $(M,N)$ is $\EE(M,N) := \EE(M,N,\nu_N M)$.
\end{definition}
I will regard explosive pairs as a full subcategory of explosive triples. 

The construction of simple explosions is simpler than the general construction of double explosions because there are no $z$  coordinates or rescaling by $\hbar^2$. In particular, the restriction of $\EE(M,N)$ to $\hbar=0$ is the normal bundle $\nu_N M$, and the restriction of $\EE\phi$ to $\hbar=0$ is the linearization of $\phi$ about $N$.

Note that if $(M,N)$ is an explosive pair, then $(TM,TN)$ is also an explosive pair, and the simple explosion is the subbundle $\EE(TM,TN)\subset T\EE(M,N)$ of vectors tangent to the constant $\hbar$ hypersurfaces. The bundle projection is the explosion of the bundle projection for $TM$.

In order to construct polarizations in Section \ref{Polarization}, we will need to construct tangent distributions on simple explosions. 
\begin{lem}
\label{Lift distribution}
If $(M,N)$ is an explosive pair and $\F\subset TM$ is a tangent distribution (i.e., a subbundle) such that $\rk (\F\cap TN)$ is locally constant, then $\EE(\F,\F\cap TN) \subset T\EE(M,N)$ is a tangent distribution. The same is true for a complex tangent distribution, $\F\subset T_\co M$.
\end{lem}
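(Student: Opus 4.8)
The plan is to reduce everything to the local model of explosive charts, where both $\EE(M,N)$ and the candidate subbundle $\EE(\F,\F\cap TN)$ are described by explicit formulas, and then check that the latter is a genuine subbundle of constant rank. Since $\EE$ is a functor and $\Pr$ is a natural transformation (Theorem~\ref{Explosion}), and since a subbundle is a local notion that is preserved under the injective local diffeomorphisms used to glue an atlas, it suffices to verify the claim over a single explosive chart $(M,N,E)$ with $E=\nu_N M$, i.e. with no $z$-coordinates: coordinates $(x,y)$ with $N=\{y=0\}$. In such a chart $\EE(M,N)=\{(x,y,\hbar)\mid (x,\hbar y)\in M\}$, and $\EE(TM,TN)\subset T\EE(M,N)$ is the distribution of vectors annihilating $d\hbar$, with fiber coordinates dual to $(x,y)$. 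A tangent distribution $\F\subset TM$ restricted to this chart is spanned by vector fields; the first step is to choose a local frame for $\F$ adapted to $N$, using the hypothesis that $\F\cap TN$ is locally trivial of some constant rank $k$: pick sections $X_1,\dots,X_k$ of $\F$ that along $N$ span $\F\cap TN$ (hence have vanishing $y$-component along $N$, i.e. are ``normal to $N$'' in the sense used for $\Omegan$ but for vectors), and complete them by $X_{k+1},\dots,X_r$ to a frame of $\F$.

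The key step is then the explosion of these frame vector fields. For a vector field $X$ on $M$ that lies in $\F\cap TN$ along $N$, its $y$-components vanish on $N$, so by the same L'Hôpital/Taylor argument used in the proof of Theorem~\ref{Explosion} and Lemma~\ref{Exploding forms}, the naive pullback of $X$ to $\EE(M,N)$ via $\Pr_M$ has $y$-components divisible by $\hbar$, and one obtains a smooth vector field $\EA X$ (the analogue for vectors of the $\EA$ operation on $1$-forms) on $\EE(M,N)$, tangent to the constant-$\hbar$ slices, whose restriction to $\hbar=0$ is the linearization of $X$ along $N$ viewed as a vertical vector field on $\nu_N M$ — and this restriction is nonzero precisely because the $X_1,\dots,X_k$ are independent in $\F\cap TN$ (Corollary~\ref{h0} identifies the $\hbar=0$ slice with $\nu_N M$ and $\EE\F$'s $\hbar=0$ part with the corresponding tangent-to-fibers distribution). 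For the remaining frame vectors $X_{k+1},\dots,X_r$, whose $y$-components need not vanish on $N$, one simply pulls them back directly (their $\hbar=0$ value is the ``horizontal'' part, again read off from \eqref{Ephi}-type formulas). One checks these $r$ vector fields $\widetilde X_1,\dots,\widetilde X_r$ on $\EE(M,N)$ are pointwise linearly independent everywhere, including at $\hbar=0$, and that their span is independent of the chosen frame and of the chart (this is exactly the functoriality of $\EE$ applied to the transition maps, which are compatible injective local diffeomorphisms). Their span is by construction $\EE(\F,\F\cap TN)$ over the $\hbar\neq0$ locus and extends it smoothly across $\hbar=0$; hence $\EE(\F,\F\cap TN)$ is a subbundle of $T\EE(M,N)$ of constant rank $r$.

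For the complex case $\F\subset T_\co M$ nothing changes: one runs the same argument with complex-valued vector fields, the explosion and the L'Hôpital argument being $\co$-linear, and $T_\co\EE(M,N)=T\EE(M,N)\otimes\co$. The main obstacle — and the place requiring genuine care rather than routine bookkeeping — is establishing local triviality (constant rank) of $\EE(\F,\F\cap TN)$ precisely at $\hbar=0$: away from there it is obvious, but at $\hbar=0$ the distribution is assembled from two qualitatively different pieces, the ``vertical'' directions coming from $\F\cap TN$ (which required the local-triviality hypothesis to control their rank) and the ``horizontal'' directions coming from a complement of $\F\cap TN$ in $\F$, and one must confirm these do not collapse into one another in the limit. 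This is handled by writing out the $\hbar=0$ Jacobian data as in \eqref{TEphi}/\eqref{Ephi} and observing the block-triangular structure, exactly as in the proof of the submersion/immersion lemma above; the hypothesis that $\F\cap TN$ is locally trivial is what guarantees the relevant block has constant rank, so that the total rank is constant across $\hbar=0$.
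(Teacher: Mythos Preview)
Your overall plan---pick an adapted local frame, lift it to $\EE(M,N)$, and check independence at $\hbar=0$ via a block-triangular argument---is sound and is essentially an unpacked version of the paper's proof. The paper packages it more cleanly: an adapted frame is the same thing as a local trivialization $\phi:U\times\R^r\to\F$ carrying $(U\cap N)\times\R^k$ onto $(\F\cap TN)\rvert_{U\cap N}$, which is a compatible diffeomorphism of explosive pairs; then $\EE\phi$ is automatically a local trivialization of $\EE(\F,\F\cap TN)$ over $\EE(U,U\cap N)$, and the injectivity of the inclusion into $\EE(TM,TN)$ was already handled by the explosive-immersion lemma. This sidesteps any explicit computation of lifted frame vectors.

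Where your write-up goes wrong is the handling of the complementary vectors $X_{k+1},\dots,X_r$. You say one ``simply pulls them back directly'' and that their $\hbar=0$ value is the ``horizontal part''. Neither is correct. For $\hbar\neq0$, the fiber of $\EE(\F,\F\cap TN)$ at $(x,y,\hbar)$ consists of those $(\dot x,\dot y)$ with $(\dot x,\hbar\dot y)\in\F_{(x,\hbar y)}$; the naive lift $(X_i^x(x,\hbar y),X_i^y(x,\hbar y))$ does not lie in this fiber. The correct lift for $i>k$ corresponds to the section $\hbar X_i$ of $\F$ and is $\tilde X_i=(\hbar\,X_i^x(x,\hbar y),\,X_i^y(x,\hbar y))$, whose $\hbar=0$ value is the purely \emph{vertical} vector $(0,X_i^y(x,0))$. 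Conversely, for $i\le k$ the lift is $\tilde X_i=(X_i^x(x,\hbar y),\,\hbar^{-1}X_i^y(x,\hbar y))$, with $\hbar=0$ value $(X_i^x(x,0),\,y\cdot\partial_yX_i^y(x,0))$: it is the \emph{horizontal} component $X_i^x(x,0)\,\partial_x$ that witnesses independence, not a vertical one. So your horizontal/vertical labels are reversed. Once you make this correction, your block-triangular argument goes through exactly as you describe: the $\partial_x$-parts of $\tilde X_1,\dots,\tilde X_k$ are independent because the $X_i$ frame $\F\cap TN$, and the $\partial_y$-parts of $\tilde X_{k+1},\dots,\tilde X_r$ are independent because their images span $\F/(\F\cap TN)$.
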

\begin{proof}
The intersection $\F\cap TN$ is the kernel of the map $\F\rvert_N\oplus TN \to T_NM$, defined by subtraction. This map has constant rank, therefore $\F\cap TN$ is locally trivial, hence it is a submanifold of $TN$, and $(\F,\F\cap TN)$ is indeed an explosive pair. The inclusion $\F\subset TM$ is an injective explosive immersion; its explosion gives the inclusion $\EE(\F,\F\cap TN)\subset \EE(TM,TN) \subset T\EE(M,N)$.

Because $\F$ and $\F\cap TN$ are locally trivial, any point of $M$ has a neighborhood $U\subset M$, such that there exists a trivialization of $\F$ over $U$ that restricts to a trivialization of $\F\cap TN$ over $U\cap N$. The explosion of such a local trivialization gives a local trivialization of $\EE(\F,\F\cap TN)$. 

The proof for $\F\subset T_\co M$ is identical.
\end{proof}

\subsection{Integration: construction}
\label{Integration2}
The definitions of the previous section allow a more precise statement of the guesses of Section~\ref{Integration1}. Again, let $(\jac,\theta)$ be a contact groupoid that integrates a Poisson manifold $(M,\Pi)$.

The unit map $\unit : M\into \jac$ identifies $M$ with a submanifold of $\jac$. The normal bundle  $\nu_M\jac$ is the Lie algebroid $T^*_\Pi M$. In this sense, $T^* M$ is a subbundle of the normal bundle: $T^*M\subset  T^*_\Pi M = T^*M\oplus\R$. In this way, we have an explosive triple, $(\jac,M,T^*M)$.
\begin{definition}
Let $\Gamma := \EE(\jac,M,T^*M)$ with structure maps defined by applying $\EE$ to the structure maps of $\jac$. Let $\omega_\Gamma := -d(\EA_\jac\theta) \in \Omega^2(\Gamma)$.

Let $n:= \dim M$, so that $\dim \jac = 2n+1$ and $\dim\Gamma=2n+2$. 
\end{definition}

\begin{lem}
\label{Groupoid}
$\Gamma$ is a Lie groupoid over $\Hei M$.
\end{lem}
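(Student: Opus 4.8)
The plan is to derive everything from the functoriality of the explosion $\EE$ (Theorem~\ref{Explosion}). By Definition~\ref{Groupoid def} the groupoid structure of $\jac$ is nothing but a collection of commutative diagrams relating $\jac$, $M$, and the composable-tuple manifolds $\jac_2,\jac_3$; the idea is to put natural explosive-triple structures on all of these for which every map occurring in the diagrams is \emph{compatible}, and for which $\EE$ sends the fibered products $\jac_2,\jac_3$ to the corresponding fibered products for $\Gamma$. Then $\EE$ applied to the diagrams for $\jac$ gives precisely the diagrams defining a Lie groupoid $\Gamma=\EE(\jac,M,T^*M)$ over $\EE(M,M,0)=\Hei M$, with structure maps $\s_\Gamma=\EE\s$, $\t_\Gamma=\EE\t$, $\unit_\Gamma=\EE\unit$, $\inv_\Gamma=\EE\inv$, $\m_\Gamma=\EE\m$.

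First I would set up the explosive triples. The unit map embeds $M$ diagonally in $\jac_n$ (by $x\mapsto(\unit x,\dots,\unit x)$) with normal bundle canonically $\nu_M\jac_n\cong A^{\oplus n}$, where $A:=\nu_M\jac=T^*_\Pi M$; put $E_n:=(T^*M)^{\oplus n}\subset A^{\oplus n}$, so $(\jac_n,M,E_n)$ is an explosive triple, and $(\jac_1,M,E_1)=(\jac,M,T^*M)$, $(\jac_0,M,E_0)=(M,M,0)$. Next I would check compatibility: every structure map of $\jac$ and every auxiliary map in the diagrams of Definition~\ref{Groupoid def} (the unit, source, target, inverse; the multiplications $\m:\jac_2\to\jac$ and $\m\times\id,\ \id\times\m:\jac_3\to\jac_2$; the Cartesian projections; and the insertions such as $\gamma\mapsto(\gamma,\unit\s\gamma)$ and $\gamma\mapsto(\gamma,\gamma^{-1})$) sends units to units, and its induced map on normal bundles is one of the elementary maps between direct sums of copies of $A$ — a projection, a diagonal, $\pm\id$, a zero insertion, or the summation $A\oplus A\to A$ expressing the Lie algebroid structure — each of which carries the $T^*M$-summands into $T^*M$-summands. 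So all these maps are compatible. Moreover $\s$ and $\t$ are \emph{explosive submersions} (submersions, restricting to $\id_M$ on $M$, with induced maps $T^*M\to\nu_M M=0$ vacuously surjective), so by the lemma that the explosion of an explosive submersion is a submersion, $\s_\Gamma=\EE\s$ and $\t_\Gamma=\EE\t$ are submersions $\Gamma\to\Hei M$; in particular the fibered products $\Gamma_2$ and $\Gamma_3$ exist as manifolds.

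The substantive point — which I expect to be the main obstacle — is that $\EE$ sends $\jac_2,\jac_3$ to $\Gamma_2,\Gamma_3$; equivalently, the natural comparison map $\EE(\jac_n,M,E_n)\to\Gamma_n$, which over $\hbar\neq0$ is the identity of $\jac_n\times\R_*$, is a diffeomorphism ($n=2,3$). Over $\hbar\neq0$ this is immediate, since $(\Pr_M,\hbar)$ identifies $\Gamma|_{\hbar\neq0}$ with $\jac\times\R_*$ as a groupoid over $M\times\R_*$; all the work is across $\hbar=0$. There I would argue in explosive charts: $\jac_2=\jac\times_{\s,\t}\jac$ is the pullback of the explosive submersion $\t$ along $\s$, so after choosing, near the units, explosive charts on $\jac$ in which $\t$ is a linear projection adapted to the explosive structure, the claim reduces to a direct verification that the double-explosion formulas \eqref{double explosion}, \eqref{projection}, \eqref{Ephi} commute with forming such pullbacks; Corollary~\ref{h0} provides the consistency check at $\hbar=0$, identifying the $\hbar=0$ fibre of either side with the graded bundle of the filtered bundle $\nu_M\jac_n$.

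Granting this, I would conclude by applying the functor $\EE$ to each diagram of Definition~\ref{Groupoid def} for $\jac$: replacing exploded $\jac_2,\jac_3$ by $\Gamma_2,\Gamma_3$ and exploded projection data by that of $\Gamma_2,\Gamma_3$, every diagram holds verbatim for $\Gamma$, and each ``there exists one map completing \dots'' clause holds because the explosion of the corresponding map for $\jac$ completes the exploded diagram (uniqueness, if desired, then follows from the groupoid identities). Finally $\t_\Gamma$ has Hausdorff fibres: over $(x,\hbar_0)$ with $\hbar_0\neq0$ the fibre is $\t^{-1}(x)\times\{\hbar_0\}$, Hausdorff since $\jac$ is a Lie groupoid; over $(x,0)$ it is a fibre of the vector bundle $\Gamma|_{\hbar=0}\to M$ of Corollary~\ref{h0}, hence a vector space. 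Since $\Hei M=M\times\R$ is Hausdorff and $\Gamma$ is a smooth manifold by Theorem~\ref{Explosion}, this exhibits $\Gamma$ as a Lie groupoid over $\Hei M$. Everything except the fibered-product identification of the third paragraph is routine bookkeeping with the functoriality of $\EE$.
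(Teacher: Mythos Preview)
Your proposal is correct and follows essentially the same route as the paper: put compatible explosive-triple structures on $M,\jac,\jac_2,\jac_3$, explode the structure maps (noting $\s,\t$ are explosive submersions), identify $\EE(\jac_n,M,E_n)$ with the fibered products $\Gamma_n$, and then apply $\EE$ to the commutative diagrams of Definition~\ref{Groupoid def}. The paper handles the fibered-product step slightly more directly than your proposed chart-by-chart verification---it simply observes that the comparison map $(\EE\pr_1,\EE\pr_2):\EE(\jac_2,M,E_2)\to\Gamma\mathbin{{}_{\EE\s}\times_{\EE\t}}\Gamma$ restricts to a diffeomorphism over $\hbar\neq0$ (tautologically) and over $\hbar=0$ (by Corollary~\ref{h0}, both sides are the bundle $(T^*M\oplus\R)^{\oplus2}$), hence is bijective with bijective differential---but the overall logic is the same.
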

\begin{proof}
Note that $(M,M,0)$ is an explosive triple. By Theorem~\ref{Explosion}(\ref{Trivial}), the explosion is just 
\[
\EE(M,M,0) = M\times\R = \Hei M
\]
as a manifold.  The map $(\unit,\unit) : M\to \jac_2$ identifies $M$ with a submanifold of $\jac_2$. The normal bundle is $T^*_\Pi M\oplus T^*_\Pi M$. In terms of this, $(\jac_2,M,T^*M\oplus T^*M)$ is an explosive triple;
denote its explosion tentatively as $\Gamma_2$. 

The groupoid structure maps $\unit : M\to\jac$, $\inv :\jac\to\jac$, $\s,\t:\jac\to M$, and $\m,\pr_1,\pr_2 : \jac_2 \to \jac$ are all compatible with these explosive triples, so we can explode all of them. 
In particular, $\s$ and $\t$ are explosive submersions, so $\EE\s$ and $\EE\t$ are submersions as well.

To check that the fibers of $\EE\t$ are Hausdorff, we need to show that any two points of a fiber are either separated or equal, so suppose that $\gamma,\eta\in\Gamma$ such that $\EE\t(\gamma)=\EE\t(\eta)$. Firstly, $\Pr_\jac\gamma$ and $\Pr_\jac\eta$ are in the same $\t$-fiber, so if $\Pr_\jac\gamma\neq\Pr_\jac\eta$, then they are separated, and the inverse images by $\Pr_\jac$ of their disjoint neighborhoods are disjoint neighborhoods of $\gamma$ and $\eta$, so those are separated. Secondly, if $\Pr_\jac\gamma=\Pr_\jac\eta$ and $\hbar(\gamma)\neq\hbar(\eta)$, then they are separated, but if $\hbar(\gamma)=\hbar(\eta)\neq0$, then they are equal. Finally, if $\Pr_\jac\gamma=\Pr_\jac\eta$ and $\hbar(\gamma)=\hbar(\eta)=0$, then $\gamma$ and $\eta$ lie in the same coordinate patch in the construction of $\Gamma$, so they are either separated or equal.

We do need to check that $\Gamma_2$ really is the space of composable pairs, $\Gamma \mathbin{{}_{\EE\s}\times_{\EE\t}} \Gamma$, so let 
\[
F := (\EE\pr_1,\EE\pr_2) : \Gamma_2 \to \Gamma \mathbin{{}_{\EE\s}\times_{\EE\t}} \Gamma \subset \Gamma\times\Gamma \ .
\]
By construction, the restriction of $F$ to $\hbar\neq0$ is a diffeomorphism, so consider what happens at $\hbar=0$. By Corollary~\ref{h0}, the restriction of $\Gamma$ to $\hbar=0$ is the bundle $T^*M\oplus\R$ and $\EE\s$ and $\EE\t$ both restrict to the bundle projection to $M$. This shows that the restriction of $\Gamma \mathbin{{}_{\EE\s}\times_{\EE\t}} \Gamma$ to $\hbar=0$ is the bundle $T^*M\oplus\R\oplus T^*M\oplus\R$. This is also the restriction of $\Gamma_2$, so the restriction of $F$ to $\hbar=0$ is a diffeomorphism. This implies that  $F$ is bijective and the differential of $F$ is bijective, therefore $F$ is a diffeomorphism. 

There is a similar explosive triple for $\jac_3$, and by a similar argument that explosion is exactly the space of composable triples.

All of the remaining axioms for a Lie groupoid are expressed as commutative diagrams in Definition~\ref{Groupoid def}. Each of these diagrams is commutative for $\jac$, and applying the functor $\EE$ gives the corresponding commutative diagram for $\Gamma$.
\end{proof}
\begin{remark}
To see that $\Gamma$ really is the push-forward in \eqref{tentative Gamma},
note that a coordinate chart on $M$ and a trivialization of $T^*_\Pi M$ over the coordinate patch give an explosive chart on $\jac$ with the help of $\Exp_\jac:T^*_\Pi M\to\jac$. Covering $\jac$ with such charts gives an atlas that expresses this union as an explosion.
\end{remark}
\begin{remark}
Although the subgroupoid of $\Gamma$ at $\hbar=0$ is diffeomorphic to $T^*M\times\R$, the groupoid structure is not the obvious one (a bundle of abelian groups). Instead,  $T^*M\times\R$ is a central extension of  the bundle of abelian groups, $T^*M$, determined by $\Pi$. The fibers are Heisenberg groups. This can be seen by considering the Lie algebroid $T^*\Hei M$, which restricts to a bundle of Lie algebras along $\hbar=0$. This is discussed further in Appendix~\ref{Parabolic}.
\end{remark}

\begin{lem}
\label{Symplectic potential}
The contact form, $\theta$, is in $\Omegan(\jac,M,T^*M)$, so 
$\EA_\jac \theta\in\Omega^1(\Gamma)$ is defined. Moreover, $\EA_\jac \theta$ is multiplicative. 
\end{lem}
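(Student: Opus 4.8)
The plan is to verify the two claims in order: first that $\theta \in \Omegan(\jac, M, T^*M)$, and then that $\EA_\jac \theta$ is multiplicative. For the first claim, recall that $\Omegan(\jac, M, T^*M)$ consists of $1$-forms on $\jac$ that restrict to zero on $TM$ and on $E = T^*M$ along the unit submanifold $M \subset \jac$. I would work infinitesimally at a point $\unit(x)$. The tangent space $T_{\unit(x)}\jac$ splits (noncanonically) as $T_xM \oplus (T^*_\Pi M)_x = T_xM \oplus T^*_xM \oplus \R$, where the $T_xM$ piece is the tangent to the unit submanifold and the subbundle $E$ corresponds to the $T^*_xM$ summand of the normal bundle. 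Since $\theta$ is multiplicative, $\partial^*\theta = 0$, and pulling this back along $(\unit,\unit): M \to \jac_2$ forces $\unit^*\theta = 0$; hence $\theta$ annihilates $T_xM$ already. For the $E$-direction, the key point is that the contact distribution $\ker\theta$ at $\unit(x)$ must contain the image of $T^*_xM$: the Jacobi map condition in Definition~\ref{Contact}, applied at $\gamma = \unit(x)$ with $X$ running over the normal directions identified with $T^*_\Pi M = \nu_M\jac$, pins down how $d\theta$ and $\t$ interact, and combined with $\unit^*\theta = 0$ this shows $\theta$ vanishes on the $T^*M$ summand of the normal bundle. (Concretely, the Reeb vector field of $\theta$ at $\unit(x)$ corresponds to the $\R$-summand of $T^*_\Pi M = T^*M \oplus \R$, which is transverse to $E$, so $\ker\theta \supseteq E$ along $M$ is exactly the statement that $\theta$ is normal to $E$.) This gives $\theta \in \Omegan(\jac, M, T^*M)$, so $\EA_\jac\theta$ is a well-defined $1$-form on $\Gamma$ by Lemma~\ref{Exploding forms}, characterized by $\hbar\, \EA_\jac\theta = \Pr_\jac^*\theta$.

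For multiplicativity, I would use naturality of $\EA$ together with functoriality of explosion. The structure maps $\pr_1, \pr_2, \m : \jac_2 \to \jac$ are compatible maps of explosive triples $(\jac_2, M, T^*M\oplus T^*M) \to (\jac, M, T^*M)$ (this is essentially checked in the proof of Lemma~\ref{Groupoid}, where $\Gamma_2 = \EE(\jac_2, M, T^*M\oplus T^*M)$ is identified with the composable pairs of $\Gamma$). Therefore Lemma~\ref{Exploding forms} gives
\[
(\EE\pr_i)^* \EA_\jac\theta = \EA_{\jac_2}(\pr_i^*\theta), \qquad (\EE\m)^* \EA_\jac\theta = \EA_{\jac_2}(\m^*\theta),
\]
where on the right $\EA_{\jac_2}$ is the corresponding natural transformation for the explosive triple on $\jac_2$. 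Hence
\[
\partial^* \EA_\jac\theta = (\EE\pr_1)^*\EA_\jac\theta - (\EE\m)^*\EA_\jac\theta + (\EE\pr_2)^*\EA_\jac\theta = \EA_{\jac_2}\!\bigl(\pr_1^*\theta - \m^*\theta + \pr_2^*\theta\bigr) = \EA_{\jac_2}(\partial^*\theta) = \EA_{\jac_2}(0) = 0,
\]
using linearity of $\EA_{\jac_2}$ and the multiplicativity $\partial^*\theta = 0$ of the contact form on $\jac$. One small point to check is that $\pr_1^*\theta - \m^*\theta + \pr_2^*\theta$ lies in $\Omegan(\jac_2, M, T^*M\oplus T^*M)$ so that $\EA_{\jac_2}$ applies to it — but it is identically zero, so this is automatic; alternatively, each of $\pr_i^*\theta$ and $\m^*\theta$ individually lies in that space since each structure map is compatible.

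I expect the main obstacle to be the first claim — verifying $\theta \in \Omegan(\jac, M, T^*M)$ — because it requires unpacking precisely how the contact/Jacobi structure on $\jac$ interacts with the filtration $T^*M \subset T^*_\Pi M = \nu_M\jac$ of the normal bundle. The cleanest route is probably to recall (or re-derive from Definition~\ref{Contact}) that the Lie algebroid of a contact groupoid $(\jac, \theta)$ integrating $(M,\Pi)$ is $T^*_\Pi M = T^*M \oplus \R$, and that under this identification the contact form $\theta$ at $\unit(x)$ is determined: it kills $T_xM$ and the $T^*M$-part of the normal bundle, and pairs nontrivially with the $\R$-part (which contributes the Reeb direction). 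The second claim, by contrast, is a formal consequence of naturality once the explosive-triple compatibility of the structure maps — already established in Lemma~\ref{Groupoid} — is invoked, so it should be routine.
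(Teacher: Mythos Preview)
Your proposal is correct and follows essentially the same approach as the paper: pull back $\partial^*\theta=0$ by $(\unit,\unit)$ to get $\unit^*\theta=0$, then identify which normal directions $\theta$ kills, and finally deduce multiplicativity of $\EA_\jac\theta$ from naturality of $\EA$ exactly as you do.

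The only real difference is in how the first claim is pinned down. You argue via the Reeb vector field and the Jacobi map condition, which works but is a bit circuitous. The paper's route is more direct: a multiplicative $1$-form on a Lie groupoid is equivalent to a Lie algebroid $1$-cochain, namely its pairing with the normal bundle $\nu_M\jac = T^*_\Pi M = T^*M\oplus\R$; here that cochain is simply $(\xi,f)\mapsto f$, so $\theta$ vanishes on the $T^*M$ summand by definition of the filtration. This avoids invoking the Reeb field or Definition~\ref{Contact} and makes the identification $\ker(\theta|_{\nu_M\jac}) = T^*M$ immediate. Your version is not wrong, but the algebroid-cochain observation is the cleanest way to close the argument.
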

\begin{proof}
By definition, the contact form $\theta\in\Omega^1(\jac)$ is multiplicative; that is
\beq
\label{multiplicativity}
0 = \partial^*\theta = \pr_1^*\theta - \m^*\theta + \pr_2^*\theta .
\eeq

The composition of $\pr_1$, $\pr_2$ or $\m$ with $(\unit,\unit)$ is just $\unit$, so pulling back eq.~\eqref{multiplicativity} by $(\unit,\unit)$ gives
\[
0 = \unit^*\theta - \unit^*\theta + \unit^*\theta = \unit^*\theta\ .
\]
This means that $\theta$ is normal to the unit submanifold. Any multiplicative $1$-form is equivalent to an algebroid $1$-cochain, which is just the pairing with the normal bundle. In this case, the algebroid cochain maps any element of $T^*_\Pi M = T^*M\oplus\R$ to its $\R$-component. This means that $T^*M\subset T^*_\Pi M$ is precisely the subbundle of the normal bundle that is normal to $\theta$, therefore $\theta\in\Omegan(\jac,M,T^*M)$, and $\EA_\jac\theta$ is defined.

By eq.~\eqref{multiplicativity} and Lemma \ref{Exploding forms} (naturality of $\EA$)
\begin{multline*}
(\EE\pr_1)^*\EA_\jac \theta - (\EE\m)^*\EA_\jac \theta + (\EE\pr_2)^*\EA_\jac \theta 
 = \EA_{\jac_2} (\pr_1^*\theta - \m^*\theta + \pr_2^*\theta ) =0
\end{multline*}
so $\EA_\jac \theta$ is multiplicative. 
\end{proof}

\begin{thm}
\label{Integration thm}
$(\Gamma,\omega_\Gamma)$  is a symplectic groupoid integrating $\Hei(M,\Pi)$.
\end{thm}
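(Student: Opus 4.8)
The plan is to verify the three defining conditions in turn: that $(\Gamma,\omega_\Gamma)$ is a symplectic groupoid, that $\Gamma$ is $\s$-connected over $\Hei M$, and that $\EE\t:\Gamma\to\Hei M$ is a Poisson map for the Heisenberg-Poisson structure $\hbar\Pi$. Lemma~\ref{Groupoid} already gives that $\Gamma$ is a Lie groupoid over $\Hei M$, and Lemma~\ref{Symplectic potential} gives that $\EA_\jac\theta$ is a multiplicative $1$-form, hence $\omega_\Gamma=-d(\EA_\jac\theta)$ is a closed multiplicative $2$-form (since $\partial^*$ commutes with $d$). So the main remaining points are nondegeneracy of $\omega_\Gamma$, $\s$-connectedness, and the Poisson-map condition.

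First I would handle nondegeneracy. Over the dense open set $\hbar\neq0$, the map $(\Pr_\jac,\hbar)$ identifies this part of $\Gamma$ with $\jac\times\R_*$, and on that region $\EA_\jac\theta=\hbar^{-1}\Pr_\jac^*\theta$, so $\omega_\Gamma=-d(\hbar^{-1}\theta)=\hbar^{-2}d\hbar\wedge\theta-\hbar^{-1}d\theta$ (pulled back). Since $\theta$ is a contact form, $\theta\wedge(d\theta)^n$ is nowhere zero on $\jac$, and a direct computation shows $\omega_\Gamma^{n+1}$ is a nonzero multiple of $\hbar^{-2n-2}\,d\hbar\wedge\theta\wedge(d\theta)^n$, hence nonvanishing for $\hbar\neq0$. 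The work is at $\hbar=0$: here one uses the explosive-chart coordinates $(x,y,z,\hbar)$, writes $\theta=\theta_x\,dx+\theta_y\,dy+\theta_z\,dz$ with $\theta_x,\theta_y$ vanishing along $N$, and computes $\EA_\jac\theta$ at $\hbar=0$ via L'Hôpital (as in Lemma~\ref{Exploding forms}) to get an explicit $1$-form; then one checks that $-d(\EA_\jac\theta)$ restricted to $\hbar=0$ is nondegenerate. The cleanest way is the Lie-algebroid picture: $\Gamma$ has Lie algebroid $T^*\Hei M$ (this follows because $\Gamma$ is built as an explosion of $\jac$, whose algebroid is $T^*_\Pi M=\nu_M\jac$, and Corollary~\ref{h0} identifies the normal bundle of the units in $\Gamma$ with the graded bundle $T^*M\oplus\R\cong T^*\Hei M$), and a multiplicative closed $2$-form on a groupoid is symplectic iff the induced bundle map from the algebroid to $T^*(\text{base})$ is an isomorphism; since $T^*\Hei M$ is already a cotangent bundle and the induced map is the identity-like pairing coming from $\EA_\jac\theta$, nondegeneracy holds everywhere, including $\hbar=0$ where the fibers are Heisenberg groups with their standard symplectic-on-the-groupoid structure.

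Next, $\s$-connectedness: the $\s$-fibers of $\jac$ are connected by hypothesis (contact groupoids integrating a Poisson manifold are $\s$-connected), and $\EE\s$ is a submersion whose fibers over $\hbar\neq0$ are diffeomorphic to those of $\s$ (hence connected), while the fibers over $\hbar=0$ are the Heisenberg-group fibers of $T^*M\oplus\R$, which are vector-space-like and connected; since $\hbar$ is locally constant on $\s$-fibers only in the sense that each fiber lies in a single $\hbar$-level, each $\EE\s$-fiber is one of these connected pieces, so $\Gamma$ is $\s$-connected.

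Finally, the Poisson-map condition for $\EE\t:\Gamma\to\Hei M$. The general principle (Coste–Dazord–Weinstein) is that for a symplectic groupoid $(\Sigma,\omega)$ with Lie algebroid $A$, the target is automatically a Poisson map for the Poisson structure on the base whose cotangent Lie algebroid is $A$. Here the Lie algebroid of $\Gamma$ is $T^*\Hei M$ with its bracket computed in \eqref{intertwine}, which is exactly the cotangent Lie algebroid of $(\Hei M,\hbar\Pi)$; therefore $\EE\t$ is a Poisson map onto $\Hei(M,\Pi)$. Alternatively, and more concretely, over $\hbar\neq0$ one pushes forward the known statement that $\t:\jac\to M$ is a Jacobi map: rescaling $\theta\mapsto\hbar^{-1}\theta$ turns the Jacobi condition with $\Pi$ into the symplectic-groupoid condition with $\hbar\Pi$, so $\EE\t$ is Poisson on the dense set $\hbar\neq0$, and by continuity it is Poisson on all of $\Gamma$.

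I expect the nondegeneracy of $\omega_\Gamma$ along $\hbar=0$ to be the main obstacle, since that is precisely where the naive formula $-d(\hbar^{-1}\theta)$ breaks down and one must use either the explicit L'Hôpital expression for $\EA_\jac\theta$ in explosive charts together with the contact condition on $\theta$, or the Lie-algebroid characterization of multiplicative symplectic forms; everything else is either already established in the preceding lemmas or follows by the density of $\hbar\neq0$ and continuity.
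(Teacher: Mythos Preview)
Your overall structure matches the paper's, and your Poisson-map argument via density at $\hbar\neq0$ is exactly what the paper does (it invokes Crainic--Zhu's Poissonization result, \cite[Prop.~2.7]{c-z}, to get that the $\hbar>0$ part of $(\Gamma,\omega_\Gamma)$ is a symplectic groupoid integrating $\Hei_+(M,\Pi)$, then flips signs for $\hbar<0$ and uses density). The genuine difference is in how nondegeneracy at $\hbar=0$ is handled, and here the paper has a cleaner trick than either of your two options.

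First, a slip: $\omega_\Gamma^{n+1}$ carries a factor of $\hbar^{-n-2}$, not $\hbar^{-2n-2}$ --- one factor of $\hbar^{-2}$ from the single $d\hbar\wedge\theta$ term and $n$ factors of $\hbar^{-1}$ from the $d\theta$ terms. This exponent is the whole point of the paper's argument. Rather than expanding $\EA_\jac\theta$ at $\hbar=0$ via L'H\^opital or invoking the IM-form criterion, the paper simply observes that in an explosive chart around $M\subset\jac$ there are $n$ $x$-coordinates (unscaled), $n$ $y$-coordinates (scaled by $\hbar$), and one $z$-coordinate (scaled by $\hbar^2$), so the Jacobian of $(\Pr_\jac,\hbar)$ is $\hbar^{n+2}$. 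Writing $\omega_\Gamma^{n+1}$ in explosive-chart coordinates therefore cancels the $\hbar^{-n-2}$ exactly, and since $\theta\wedge(d\theta)^n$ is nowhere vanishing on $\jac$, the top power $\omega_\Gamma^{n+1}$ is nowhere vanishing on all of $\Gamma$, including $\hbar=0$. This is a two-line argument that bypasses both the explicit coordinate expansion you sketch and any identification of the induced algebroid map.

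Your algebroid-characterization route would work in principle, but the step you summarize as ``the induced map is the identity-like pairing'' is exactly where the content would lie and is not justified as stated; the paper's Jacobian cancellation avoids that issue entirely. The $\s$-connectedness check you include is fine but is not spelled out in the paper's proof.
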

\begin{proof}
Since $\theta$ is a contact form, $\theta\wedge (d\theta)^n$ is a nowhere-vanishing volume form on $\jac$.
Over $\hbar\neq0$, the maximum exterior power of 
\[
\omega_\Gamma=-d(\EA_\jac\theta) = \hbar^{-2}d\hbar\wedge\Pr_\jac^*\theta-\hbar^{-1}\Pr_\jac^*d\theta
\] 
is
\beq
\label{volume}
\omega_\Gamma^{n+1} = (-1)^{n+1}(n+1)\hbar^{-n-2}\Pr_\jac^*(\theta\wedge [d\theta]^n)\wedge d\hbar .
\eeq
In an explosive chart around $M\subset\jac$, there are $n$ $x$-coordinates, $n$ $y$-coordinates, and one $z$-coordinate. Equation~\eqref{projection} shows that the Jacobian of the map $(\Pr_\jac,\hbar)$ is 
$\hbar^{n+2}$, which exactly cancels the power of $\hbar$ in eq.~\eqref{volume}, therefore $\omega_\Gamma^{n+1}$ is nonvanishing, and $\omega_\Gamma$ is nondegenerate. As $\omega_\Gamma$ is manifestly closed, it is a symplectic form. Multiplicativity of $\EA_\jac\theta$ (Lem.~\ref{Symplectic potential}) implies that $\omega_\Gamma$ is multiplicative, and therefore $(\Gamma,\omega_\Gamma)$ is a symplectic groupoid.

Let $\Hei_+ M:=M\times\R_+\subset \Hei M$ be the open submanifold with $\hbar>0$. A Poisson manifold is a special type of  Jacobi manifold, and in \cite{lic} (see also \cite{c-z}) a construction is given for the ``Poissonization'' of a Jacobi manifold. The Poissonization of $(M,\Pi)$ is simply $\Hei_+(M,\Pi):=(\Hei_+ M,\hbar\Pi)$; the explicit constructions are related by the coordinate transformation $\hbar=e^{-s}$.

A contact manifold is also a special type of Jacobi manifold. The Poissonization $\Hei_+(\jac,\theta)$ is a symplectic groupoid integrating $\Hei_+(M,\Pi)$ \cite[Prop.~2.7]{c-z}. As a symplectic groupoid, this is precisely the part of $(\Gamma,\omega_\Gamma)$ where $\hbar>0$.

This shows that the restriction of the target map $\EE\t:\Gamma\to \Hei M$ to $\hbar>0$ is a Poisson map. By changing signs, this also shows that the same is true for $\hbar<0$. Finally, $\hbar\neq0$ over a dense subset, therefore $\EE\t:\Gamma\to \Hei M$ is a Poisson map.
\end{proof}

\begin{remark}
Given a contact groupoid integrating a Jacobi manifold, this construction generalizes to give a symplectic groupoid integrating the Heisenberg-Poisson manifold of the Jacobi manifold. This is described in Appendix~\ref{Exploding contact}.
\end{remark}

\section{Quantization of Heisenberg-Poisson manifolds}
\label{Quantization HP}
Suppose that $(M,\Pi)$ is a Poisson manifold for which both Lie algebroids, $T^*M$ and $T^*_\Pi M$, are integrable. This implies that there exist both a symplectic groupoid $(\Sigma,\omega)$ and a  contact groupoid $(\jac,\theta)$ integrating $(M,\Pi)$, along with a surjective homomorphism (a \emph{fibration of groupoids }\cite{mac3}) $\pro : \jac\onto\Sigma$  such that
\beq
\label{symplectic quotient}
\pro^*\omega = - d\theta
\eeq
(compare \cite[Thm.~2]{c-z}).

If we start with the contact groupoid, $(\jac,\theta)$, then the kernel of $d\theta$ is a rank $1$ foliation of $\jac$. If  $\Sigma$ is defined to be the leaf space and $\pro:\jac\to\Sigma$ the quotient map, then (provided that $\Sigma$ is smooth) eq.~\eqref{symplectic quotient} determines a symplectic groupoid structure integrating $(M,\Pi)$.

Conversely, if we start with the symplectic groupoid $(\Sigma,\omega)$ and a prequantization $(L,\nabla,\sigma)$, then we can define $\jac$ to be the circle bundle associated to $L$. The cocycle $\sigma$ determines the groupoid product on $\jac$, and the connection $1$-form associated to $\nabla$ is the contact form.

Throughout this section,  $(\Gamma,\omega_\Gamma)=(\EE[\jac,M,T^*M],-d[\EA_\jac\theta])$ is the symplectic groupoid constructed from $(\jac,\theta)$. Structure maps without subscripts ($\unit$, $\s$, $\t$, $\inv$, and $\m$) are those for $\jac$. The dimensions  are $\dim M = n$, $\dim \Sigma = 2n$, $\dim \jac = 2n+1$, and $\dim\Gamma = 2n+2$.

\subsection{Prequantization}
\label{Prequantization}
Because the symplectic form $\omega_\Gamma$ is exact, $(\Gamma,\omega_\Gamma)$ can be prequantized with the trivial line bundle $\co\times\Gamma$  over $\Gamma$. Because $\EA_\jac\theta$ is a symplectic potential, the connection $d+i\EA_\jac \theta$ has curvature $\omega_\Gamma$. Because $\EA_\jac\theta$ is real, this is compatible with the trivial Hermitian inner product on $\co\times\Gamma$. Because $\EA_\jac\theta$ is multiplicative, the trivial cocycle $1$ is compatible with this connection.

In this way, $(\co\times\Gamma,d+i\EA_\jac \theta,1)$ is a prequantization of the symplectic groupoid $(\Gamma,\omega_\Gamma)$. However, there are other equivalent choices of prequantization that will be useful in more specific examples.

\subsection{Polarization}
Let $\P$ be some polarization of the symplectic groupoid $\Sigma$. Let $\P_0$ and $\D_0$ be as in Definition~\ref{Distributions}.

\begin{definition}
$\tilde\P := \EE(\P,\P\cap T_\co M)\subset T_\co \EE(\Sigma,M)$ and $\Q := (T\EE\pro)^{-1}(\tilde\P)$.
\end{definition}

\begin{lem}
The simple explosion $\EE(\Sigma,M)$ is a Lie groupoid over $\Hei M$.
If $\rk \D_0$ is locally constant, then $\tilde\P$ is a polarization of  $\EE(\Sigma,M)$.
\end{lem}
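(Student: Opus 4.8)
The lemma has two parts; the first mirrors the proof of Lemma~\ref{Groupoid} and only the second uses the hypothesis on $\D_0$. For the Lie groupoid part: $(\Sigma,M)$, $(\Sigma_2,M)$ and $(\Sigma_3,M)$ are explosive pairs (via $\unit$, $(\unit,\unit)$, $(\unit,\unit,\unit)$), with normal bundles the Whitney sums of one, two and three copies of $\nu_M\Sigma$, and every structure map of $\Sigma$ is a compatible map of these, so applying $\EE$ yields candidate structure maps on $\EE(\Sigma,M)$; since $\EE(M,M,0)=\Hei M$ by Theorem~\ref{Explosion}(\ref{Trivial}), these are maps over $\Hei M$. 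As $\s,\t$ are explosive submersions, $\EE\s,\EE\t$ are submersions, and Hausdorffness of the $\EE\t$-fibres is verified exactly as in Lemma~\ref{Groupoid}, by casework on whether two points of a fibre agree under $\Pr_\Sigma$ and under $\hbar$. That $\EE(\Sigma_2,M)$ is the space of composable pairs is also as in Lemma~\ref{Groupoid}: the comparison map $(\EE\pr_1,\EE\pr_2)$ is a diffeomorphism over $\hbar\neq0$, and at $\hbar=0$ both sides restrict by Corollary~\ref{h0} to $\nu_M\Sigma\oplus\nu_M\Sigma$ over $M$ with $\EE\s,\EE\t$ restricting to the bundle projection, so the comparison map is bijective with bijective differential, hence a diffeomorphism; likewise for $\Sigma_3$. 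The remaining axioms are the commutative diagrams of Definition~\ref{Groupoid def}, which the functor $\EE$ transports from $\Sigma$ to $\EE(\Sigma,M)$.

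For the polarization, I would first identify the intersection of $\P$ with $T_\co M$ along the unit submanifold as $\unit_*\D_0^\co$. ``$\subseteq$'': if $v\in\P_{\unit(x)}\cap T_{\unit(x),\co}M$ then $v=\unit_*v'$ is fixed by $T\inv$ (as $\inv\circ\unit=\unit$), so $v\in T\inv(\P_{\unit(x)})=\bar\P_{\unit(x)}$, and applying $T\t$ (which fixes $v$) lands $v'$ in $T\t(\P_{\unit(x)})\cap T\t(\bar\P_{\unit(x)})=\P_{0,x}\cap\bar\P_{0,x}=\D_{0,x}^\co$. ``$\supseteq$'': for $w\in\D_{0,x}^\co$, the equalities $\P_{0,x}=T\t(\P_{\unit(x)})$ and $\bar\P_{0,x}=T\s(\P_{\unit(x)})$ give $Y,Z\in\P_{\unit(x)}$ with $T\s(Y)=w=T\t(Z)$; then $(Y,Z)$ is composable and, by the formula $T\m(Y,Z)=Y+Z-\unit_*(w)$ for the tangent of multiplication at a unit, $Y+Z-\unit_*(w)\in\P_{\unit(x)}$, which forces $\unit_*(w)\in\P_{\unit(x)}\cap T_{\unit(x),\co}M$. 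Hence $\P\cap T_\co M$ is locally trivial iff $\D_0$ is, and Lemma~\ref{Lift distribution} then produces the tangent distribution $\tilde\P=\EE(\P,\P\cap T_\co M)\subset\EE(T_\co\Sigma,T_\co M)\subset T_\co\EE(\Sigma,M)$, which over $\hbar\neq0$ is the pullback of $\P$ to $\Sigma\times\R_*$.

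It remains to check $\tilde\P$ is involutive, Hermitian and multiplicative. Hermitian: $T(\EE\inv)$ restricts, on the constant-$\hbar$ subbundle, to the explosion of the compatible isomorphism $T\inv|_\P:(\P,\P\cap T_\co M)\to(\bar\P,\bar\P\cap T_\co M)$, and since $\EE$ commutes with fibrewise complex conjugation (the explosion formulas being real-algebraic), $T(\EE\inv)(\tilde\P)=\EE(\bar\P,\bar\P\cap T_\co M)=\bar{\tilde\P}$. Involutive: over the dense open set $\hbar\neq0$, $\tilde\P$ is the pullback of the involutive $\P$, so the bracket of two local sections of $\tilde\P$ is a section of $\tilde\P$ there, hence everywhere because $\tilde\P$ is a closed subbundle. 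Multiplicative: multiplicativity of $\P$ says that $T\m$ restricts to a fibrewise-surjective bundle map $\P\times_{T\s,T\t}\P\to\P$ over $\m$; this is compatible with the relevant explosive pairs, $\EE$ sends it to a bundle map $\tilde\P\times_{T\EE\s,T\EE\t}\tilde\P\to\tilde\P$ over $\EE\m$ (by the composable-pairs argument again), and by Corollary~\ref{h0} its $\hbar=0$ restriction is the associated graded of a filtered surjection, hence still fibrewise surjective; so the multiplicative identity, valid for $\hbar\neq0$, persists to $\hbar=0$.

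I expect the main obstacle to lie in two places. One is the pointwise identity $\P\cap T_\co M=\unit_*\D_0^\co$, which is exactly what makes ``$\D_0$ locally trivial'' the right hypothesis and what lets Lemma~\ref{Lift distribution} apply; the multiplicativity axiom of $\P$ is essential for the inclusion ``$\supseteq$''. The other, and I think the harder one, is pushing the multiplicative identity through $\hbar=0$: unlike the Hermitian property it is not the $\EE$-image of a single commutative diagram but a fibrewise-surjectivity statement, so one must genuinely rule out any drop in the rank of the exploded multiplication as $\hbar\to0$, which is where Corollary~\ref{h0} and the behaviour of associated graded maps under filtered surjections must be used with care.
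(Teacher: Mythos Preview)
Your proof is correct and tracks the paper closely for the Lie groupoid claim, for the involutivity of $\tilde\P$, and for the key identification $\P\cap T_\co M=\unit_*\D_0^\co$ (your directly composable pair $(Y,Z)$ with $T\s(Y)=T\t(Z)=w$ is in fact a tidier implementation of the paper's idea, which instead multiplies $X\cdot Y^{-1}$).

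The genuine divergence is in how you verify the Hermitian and multiplicative properties at $\hbar=0$. The paper does this by inspection: at $\hbar=0$, $\EE(\Sigma,M)$ is the vector bundle $T^*M$ with fibrewise addition, and the restriction of $\tilde\P$ is \emph{vertically constant}---over $\xi\in T^*_xM$ it is just $\P_{\unit(x)}$, independent of $\xi$---which immediately gives multiplicativity for the additive groupoid and invariance under negation and conjugation. Your argument instead explodes the bundle maps $T\inv|_\P$ and $T\m|_{\P\times_{T\s,T\t}\P}$ and reads off the $\hbar=0$ behaviour via Corollary~\ref{h0}. Both routes are valid. The paper's is shorter and avoids the bookkeeping you flag in your last paragraph; your functorial route, on the other hand, makes transparent \emph{why} nothing can degenerate at $\hbar=0$ (surjectivity of a filtered map passes to the associated graded), and would generalise more readily if one wanted analogous statements for double explosions or more elaborate polarizations.
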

\begin{proof}
The proof that $\EE(\Sigma,M)$ is a Lie groupoid is just a simpler version of the proof of Lemma~\ref{Groupoid}. I will not repeat the details.

Let $x\in M$ and $v \in \D_{0\,x}^\co$. By the definition of $\D_0$ there exist $X\in\P_{\unit(x)}$ and $Y\in\bar\P_{\unit(x)}$ such that $v = T\t(X)=T\t(X)$. 
The groupoid multiplication in $T_{\unit(x)}\Sigma$ (denoted by a dot here) is linear (rather than bilinear), therefore
\[
X\cdot Y^{-1} = (X-T\unit[v])\cdot 0 + (T\unit[v])\cdot(T\unit[v]) + 0\cdot (Y^{-1}-T\unit[v]) = X+Y^{-1}-T\unit(v)\ .
\]
The Hermiticity of $\P$ implies that $Y^{-1}\in\P_{\unit(x)}$.
The multiplicativity of $\P$ implies that $X\cdot Y^{-1}\in\P_{\unit(x)}$, and since this is a vector space, $T\unit(v) \in \P$. This shows that $T\unit(\D_0^\co) \subseteq\P\cap T_\co M$. The opposite inclusion is immediate from the definition of $\P_0$, therefore $\P\cap T_\co M = T\unit(\D_0^\co)$ has constant rank and is locally trivial, and by Lemma \ref{Lift distribution}, $\tilde\P$ is locally trivial.

The restriction of $\tilde\P$ to $\hbar\neq0$ is $\P\times\R_*$, which is a groupoid polarization of $\Sigma\times\R_*$. 
In particular, $\P\times\R_*$ is involutive, so if $X,Y\in\Gamma(\EE(\Sigma,M),\tilde\P)$, then the Lie bracket $[X,Y]$ is valued in $\tilde\P$ for $\hbar\neq0$. By continuity, $[X,Y]\in\Gamma(\EE(\Sigma,M),\tilde\P)$, therefore $\tilde\P$ is involutive.

By Corollary \ref{h0}, the restriction of $\EE(\Sigma,M)$ to $\hbar=0$ is the vector bundle $T^*M$, with the groupoid structure of fiberwise addition. A tangent distribution on a vector bundle is multiplicative if it is vertically constant. The restriction of $\tilde\P$ is vertically constant, therefore $\tilde\P$ is multiplicative. It is also invariant under complex conjugation and negation on $T^*M$. Therefore $\tilde\P$ is a groupoid polarization of $\EE(\Sigma,M)$.
\end{proof}

\begin{thm}
\label{Polarization}
If $\rk \D_0$ is locally constant, then $\Q$ is a polarization of the symplectic groupoid $(\Gamma,\omega_\Gamma)$.
\end{thm}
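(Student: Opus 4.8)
The plan is to split the proof into a formal part — that $\Q$ is a \emph{groupoid} polarization of $\Gamma$ — and a geometric part — that $\Q$ is Lagrangian for $\omega_\Gamma$.

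\emph{Formal part.} First observe that $\pro:(\jac,M,T^*M)\to(\Sigma,M)$ is an explosive submersion: $\pro$ and $\pro\rvert_M=\id_M$ are submersions, and the map induced on the distinguished subbundles is the identity of $T^*M=\nu_M\Sigma$. Hence, by the lemma following Theorem~\ref{Explosion}, $\EE\pro:\Gamma\to\EE(\Sigma,M)$ is a submersion; by functoriality of $\EE$ it is a homomorphism of Lie groupoids over $\Hei M$, and since $\s_\Sigma\circ\pro=\s_\jac$ and $\t_\Sigma\circ\pro=\t_\jac$, it intertwines the corresponding source and target maps. I would then prove the following general principle and apply it with $f=\EE\pro$ and $\P$ replaced by $\tilde\P$ (which is a groupoid polarization of $\EE(\Sigma,M)$ by the preceding lemma, using the hypothesis that $\D_0$ is locally trivial): \textit{if $f:\G'\to\G$ is a submersive homomorphism of Lie groupoids over a common base and $\P$ is a groupoid polarization of $\G$, then $(Tf)^{-1}(\P)$ is a groupoid polarization of $\G'$.} Since $f$ is a submersion, $(Tf)^{-1}(\P)$ is a subbundle, of rank $\rk\P+\dim\ker Tf$. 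For involutivity, span $(Tf)^{-1}(\P)$ near any point by $f$-related lifts $\tilde\xi_i$ of a local frame $\xi_i$ of $\P$ together with a local frame of the integrable bundle $\ker Tf$; brackets of $f$-related fields are $f$-related, $[\tilde\xi_i,\zeta]$ is $f$-related to $0$ for $\zeta$ a section of $\ker Tf$, and $\ker Tf$ is involutive, so every bracket of spanning sections — hence of arbitrary sections — lies in $(Tf)^{-1}(\P)$. For Hermiticity, $f\circ\inv'=\inv\circ f$ gives $Tf\circ T\inv'(X)=T\inv\circ Tf(X)\in\bar\P$ for $X\in(Tf)^{-1}(\P)$, so $T\inv'(X)\in(Tf)^{-1}(\bar\P)=\overline{(Tf)^{-1}(\P)}$, and equality of these equal-rank subbundles follows.

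\emph{Multiplicativity} is the one substantial point. Write $A:=(Tf)^{-1}(\P)$ and note $\ker Tf\subseteq\ker T\s_{\G'}$ because $\s_\G\circ f=\s_{\G'}$. For composable $(\gamma,\eta)$, the inclusion of $\{T\m(X,Y):X\in A_\gamma,\ Y\in A_\eta,\ T\s X=T\t Y\}$ in $A_{\gamma\eta}$ is immediate from $f$ being a homomorphism over the base and $\P$ being multiplicative. Conversely, given $W\in A_{\gamma\eta}$, multiplicativity of $\P$ gives $TfW=T\m(U,V)$ with $U\in\P_{f\gamma}$, $V\in\P_{f\eta}$, $T\s U=T\t V$; choose $Tf$-preimages $X$ of $U$ and $Y$ of $V$. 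Since $T\s_{\G'}=T\s_\G\circ Tf$ and likewise for $\t$, the condition $T\s X=T\t Y$ holds automatically and independently of the choice of lifts; moreover $Tf\,(T\m(X,Y)-W)=0$, so $T\m(X,Y)-W\in\ker T_{\gamma\eta}f$. Now $T\m$ is fibrewise linear and right translation by $\eta$ restricts to an isomorphism $\ker T_\gamma f\xrightarrow{\sim}\ker T_{\gamma\eta}f$; replacing $X$ by $X+Z$ for a suitable $Z\in\ker T_\gamma f$ leaves $X\in A_\gamma$ and the composability intact while achieving $T\m(X,Y)=W$. This correction-of-lifts step is where I expect to spend the most care. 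Applying the principle, $\Q$ is a groupoid polarization of $\Gamma$, of rank $\rk\tilde\P+1=\rk\P+1=n+1$.

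\emph{Geometric part.} Over $\hbar\neq0$ the identification $(\Pr_\jac,\hbar):\Gamma\rvert_{\hbar\neq0}\xrightarrow{\sim}\jac\times\R_*$ carries $\EA_\jac\theta$ to $\hbar^{-1}\Pr_\jac^*\theta$, hence $\omega_\Gamma$ to $\hbar^{-2}d\hbar\wedge\Pr_\jac^*\theta-\hbar^{-1}\Pr_\jac^*d\theta$; it carries $\EE\pro$ to $\pro\times\id$ and $\tilde\P$ to $\P$ tangent to the slices $\jac\times\{t\}$, so it carries $\Q$ to $(T\pro)^{-1}(\P)$ tangent to those slices. On two such vectors $(X,0),(Y,0)$ the form $d\hbar$ vanishes, so $\omega_\Gamma$ evaluated on them equals $-\hbar^{-1}d\theta(X,Y)=\hbar^{-1}\omega(T\pro X,T\pro Y)$ (using $\pro^*\omega=-d\theta$), which is $0$ because $T\pro X,T\pro Y\in\P$ and $\P$ is Lagrangian in $(\Sigma,\omega)$. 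Hence $\omega_\Gamma\rvert_\Q=0$ on the dense open set $\hbar\neq0$, therefore on all of $\Gamma$ by continuity; and since $\rk\Q=n+1=\tfrac12\dim\Gamma$, $\Q$ is Lagrangian. Combining the two parts, $\Q$ is a symplectic groupoid polarization of $(\Gamma,\omega_\Gamma)$.
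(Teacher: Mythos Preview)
Your proof is correct and follows essentially the same line as the paper's: establish that $\EE\pro$ is a submersion (since $\pro$ is an explosive submersion), deduce that $\Q=(T\EE\pro)^{-1}(\tilde\P)$ is a groupoid polarization, and then verify the Lagrangian condition over $\hbar\neq0$ by computing $\omega_\Gamma$ on $\Q$-vectors via $\pro^*\omega=-d\theta$, extending to all of $\Gamma$ by continuity and counting ranks. The only difference is that where the paper cites \cite[Lem.~7.10]{haw10} for the ``pullback of a groupoid polarization by a submersive groupoid homomorphism is a groupoid polarization'' step, you prove this principle inline; your correction-of-lifts argument for multiplicativity (using $\ker Tf\subset\ker T\s'$ and right translation) is the standard one and is sound.
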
 
\begin{proof}
The quotient map $\pro :(\jac,M,T^*M)\to(\Sigma,M)$ is an explosive submersion, and so $\EE\pro : \Gamma\to \EE(\Sigma,M)$ is a submersion, therefore $\Q$ is locally trivial. By \cite[Lem.~7.10]{haw10} $\Q$ is a groupoid polarization of $\Gamma$.
It remains to check that $\Q$ is a Lagrangian distribution.

Over $\jac\times\R_*\subset\Gamma$, $\Q$ is equal to $(T\pro)^{-1}(\P)\times0$. 
Let $X,Y\in \Q$ be two vectors over the same point of $\jac\times\R_*$. The directional derivative of $\hbar$ by either of these vectors is $0$, therefore the contraction with the symplectic form is
\[
\omega_\Gamma(X,Y) = (-d[\hbar^{-1}\theta])(X,Y) = -\hbar^{-1} d\theta(X,Y) .
\]
Now, we can view $X$ and $Y$ as vectors on $\jac$. The relation \eqref{symplectic quotient} between the symplectic and contact forms gives
\[
-d\theta(X,Y) = (\pro^*\omega)(X,Y) = \omega(T\pro[X],T\pro[Y]) =0 ,
\]
because $T\pro(X), T\pro(Y)\in\P$, which is Lagrangian. By continuity, this shows that $\Q$ is isotropic, but because $\rk\Q=n+1$, it is Lagrangian.
\end{proof}
\begin{remark}
If $\rk \D_0$ is not locally constant, then it seems reasonable to regard $\Q$ as a ``singular polarization''\/. This can probably still be used for quantization. In a recent paper, Bonechi, Ciccoli, Staffolani, and Tarlini \cite{b-c-s-t} used a singular polarization to construct the  \Podles\ standard sphere by the symplectic groupoid approach to quantization.
\end{remark}

\subsection{Half-forms}
\label{Half-forms}
Unless $\P$ is a real polarization, $\Q^\t$ will not be locally trivial at $\hbar=0$, so the definition of $\Omega_\Q$ by eq.~\eqref{form bundle} doesn't make sense there.

Over $\hbar\neq0$, $\tilde\P = \P\times0$, so
\[
T^\t_\co\Gamma/\Q^\t = \EE\pro^*[T^\t_\co\EE(\Sigma,M)/\tilde\P^\t]
= \EE\pro^*\Pr_\Sigma^*(T^\t_\co\Sigma/\P^\t)
= \Pr_\jac^*\pro^*(T^\t_\co\Sigma/\P^\t)\ .
\]
This means that over $\hbar\neq0$, $\Omega_\Q = \Pr_\jac^*\pro^*\Omega_\P$, so it is tempting to define $\Omega_\Q$ to be $\Pr_\jac^*\pro^* \Omega_\P$ over all of $\Gamma$. 

\begin{definition}
Let $\Gamma\rvert_{\hbar=0}$ be the $\hbar=0$ subgroupoid and $\iota : \Gamma\rvert_{\hbar=0}\into \Gamma$ the inclusion. 
\end{definition}
The polarization $\Q$ restricts to a polarization $\iota^*\Q$ of $\Gamma\rvert_{\hbar=0}$, which in turn defines an  $\Omega_{\iota^*\Q}$. Unfortunately, this is not canonically the same as $\iota^*\Pr_\jac^*\pro^* \Omega_\P$, although they are actually isomorphic. We need a way of gluing together $\Omega_{\iota^*\Q}$ at $\hbar=0$ and $\Pr_\jac^*\pro^* \Omega_\P$ over $\hbar\neq0$.

This is provided by $\varepsilon_\Q$ (Def.~\ref{Epsilon factor}) the correction factor that is needed to define the convolution product. This is a section of $\Pr_\jac^* \pro^* \Wedgem(\E^\t/\D^\t)^*$, and 
\[
\iota^*\Pr_\jac^*\pro^* [\Omega_\P\otimes  \Wedgem(\E^\t/\D^\t)] = \Omega_{\iota^*\Q}\ .
\]
Again, let $m = \rk \P_0 - \rk \D_0$. Over $\hbar\neq0$,
\[
\omega_\Gamma = \hbar^{-1}\Pr_\jac^*\pro^*\omega + \hbar^{-2}d\hbar\wedge\Pr_\jac^*\theta\ .
\]
Because $\varepsilon_\Q$ is constructed from the restriction of $\omega_\Gamma^m$,  
\[
\varepsilon_\Q = \hbar^{-m}\Pr_\jac^* \pro^*\varepsilon_\P \ ,
\]
so $\varepsilon_\Q$ appears to diverge at $\hbar=0$.

\begin{definition}
A smooth section $a$ of $\Omega_\Q^{1/2}$ is a smooth section of $\Pr_\jac^*\pro^* \Omega_\P^{1/2}$ over $\hbar\neq0$ such that $a\,\varepsilon_\Q^{-1/2}$ extends to a smooth section. 
\end{definition}
This effectively means that as a bundle, $\Omega_\Q$ is actually $\Pr_\jac^*\pro^* [\Omega_\P\otimes  \Wedgem(\E^\t/\D^\t)]$, but the Bott connection and identifications such as $\inv^*\Omega_\Q = \bar\Omega_\Q$ come from the original definition over $\hbar\neq0$.

\begin{definition}
\label{Restriction}
\[
\ev_0 : \Gamma(\Gamma,\Omega_\Q^{1/2}) \to \Gamma(\Gamma\rvert_{\hbar=0},\Omega^{1/2}_{\iota^*\Q})
\]
is given by 
\[
\ev_0(a) := \iota^*(a\,\varepsilon_\Q^{-1/2})\ ,
\]
where the extension to a smooth section should be understood.
\end{definition}
With some abuse of notation, this means that
\[
\ev_0(a) = \lim_{\hbar\to0} \hbar^{m/2}a\,\varepsilon_\P^{-1/2}\ .
\]

These definitions extend trivially to the tensor product with the prequantization line bundle (which is trivial, but has a nontrivial connection).

\begin{remark}
If $m$ is odd, then the power $\hbar^{m/2}$ looks like a problem for $\hbar<0$. In practice, it is not. For $m\neq0$, the polarization is complex and the condition of positivity (Def.~\ref{Positive}) is nontrivial. If $\P$ is positive everywhere, then $\Q$ is positive only for $\hbar\geq0$. This means that we should discard the $\hbar<0$ part of $\Gamma$ and never have to worry about taking a square root of a negative $\hbar$.
\end{remark}

\subsection{Quantization}
Again, let $\P$ be a polarization of the symplectic groupoid $(\Sigma,\omega)$ such that $\rk \D_0$ is locally constant. Let $\tilde\P$ and $\Q$ again be constructed from $\P$ as in the previous section.

\begin{definition}
\label{Reeb}
The \emph{Reeb vector field} $R\in\X^1(\jac)$ on $(\jac,\theta)$ is determined by  the conditions $R\inner\theta=1$ and $R\inner d\theta=0$.
\end{definition}
The first condition implies that $R\neq0$ everywhere. The second condition implies that $T\pro(R)=0$, so the fibers of $\pro$ are the orbits of $R$.
The quantization of $\Hei(M,\Pi)$ that is constructed with $\Gamma$ is quite different depending upon whether these fibers are lines or circles.

\subsubsection{Simply connected fibers}
\label{Simply connected}
There exists a choice of $\pro:\jac\to\Sigma$ with simply connected fibers if and only if the Poisson manifold $(M,\Pi)$ is integrable to a symplectic groupoid and  its periods \cite{c-z} are all $0$.

\begin{definition}
Let $\pro_2 : \jac_2\to\Sigma_2$ denote the restriction of $\pro\times \pro$.
\end{definition}
\begin{thm}
\label{R fibers}
If the fibers of $\pro:\jac\to\Sigma$ are lines and $\Sigma$ is paracompact, then there exist $\theta_0\in\Omega^1(\Sigma)$ and a cocycle $c\in\Ci(\Sigma_2)$ such that $\omega = -d\theta_0$, 
\[
\partial_\Sigma^*\theta_0 =
\pr_{1\,\Sigma}^*\theta_0 - \m_\Sigma^*\theta_0+\pr_{2\,\Sigma}^*\theta_0 = \pro_2^*dc\ ,
\] 
both $c$ and $dc$ vanish along the image of $M$, and $\jac$ is isomorphic to the central extension $\Sigma\times\R$ of $\Sigma$ determined by the cocycle $c$. If $\jac$ is identified with $\Sigma\times\R$, then $\pro$ is the canonical projection, and $\theta = dz+ \pro^*\theta_0$, where $z$ is the coordinate on $\R$.
\end{thm}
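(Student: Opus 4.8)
The plan is to use the Reeb flow to trivialize the fibration $\pro:\jac\to\Sigma$, to read $\theta_0$ off the contact form $\theta$, and then to extract the cocycle $c$ from the multiplicativity of $\theta$. All the real work is the trivialization; the rest is Cartan calculus and one substitution.

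\emph{Trivialization of $\pro$.} The Reeb field $R$ is nowhere zero and tangent to the fibers of $\pro$, and those fibers are precisely its orbits; by hypothesis each is diffeomorphic to $\R$. Rescaling $R$ to a complete fiber-tangent field $R'$ (possible since $\jac$ is paracompact) and flowing local sections of the submersion $\pro$ along $R'$ exhibits $\pro$ as a locally trivial bundle with fiber $\R$, in fact as a principal $\R$-bundle; as the fiber is contractible and $\Sigma$ is paracompact, this bundle is trivial. After reparametrizing the fiber coordinate I may fix an identification $\jac\cong\Sigma\times\R$, with coordinate $z$, in which $\pro$ is the Cartesian projection and $R=\partial_z$. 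Finally $\unit_\jac:M\to\jac$ is a section of this bundle over the embedded submanifold $\unit_\Sigma(M)\subseteq\Sigma$; absorbing one further shift $z\mapsto z-\pro^*\phi$, with $\phi\in\Ci(\Sigma)$ chosen to extend the $z$-component of $\unit_\jac$ (using paracompactness), I may assume $\unit_\jac$ maps $M$ into $\Sigma\times\{0\}$. Such a shift preserves $R=\partial_z$ and changes the $1$-form $\theta_0$ produced below only by an exact form, hence does not affect $\omega=-d\theta_0$.

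\emph{The contact form and the symplectic potential.} From $R\inner\theta=1$ I may write $\theta=dz+\beta$ with $\beta(\partial_z)=0$. Then $R\inner d\theta=0$ and Cartan's formula give $\Lie_{\partial_z}\beta=\partial_z\inner d\beta=\partial_z\inner d\theta=0$, so $\beta$ is horizontal and $\partial_z$-invariant, i.e.\ basic: $\beta=\pro^*\theta_0$ for a unique $\theta_0\in\Omega^1(\Sigma)$, whence $\theta=dz+\pro^*\theta_0$. Feeding this into $\pro^*\omega=-d\theta$ gives $\pro^*\omega=-\pro^*d\theta_0$, and since $\pro$ is a surjective submersion ($\pro^*$ injective on forms) this yields $\omega=-d\theta_0$.

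\emph{The cocycle and the values along units.} Since $\pro$ is a groupoid homomorphism over $\id_M$, its structure maps are compatible with the trivialization: $\s_\jac=\s_\Sigma\circ\pro$, $\t_\jac=\t_\Sigma\circ\pro$, $\pro\circ\pr_i=\pr_i^\Sigma\circ\pro_2$ and $\pro\circ\m=\m_\Sigma\circ\pro_2$, so in particular $\jac_2\cong\Sigma_2\times\R^2$ over $\Sigma_2$ with $\pro_2$ the projection. Substituting $\theta=dz+\pro^*\theta_0$ into $\partial^*\theta=\pr_1^*\theta-\m^*\theta+\pr_2^*\theta=0$ on $\jac_2$ and writing $s=\pr_1^*z$, $t=\pr_2^*z$, $\mu=\m^*z$ collapses to $d(\mu-s-t)=\pro_2^*\partial_\Sigma^*\theta_0$. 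The right-hand side annihilates the $\pro_2$-vertical directions, so $\mu-s-t$ is constant along the connected fibers $\R^2$ of $\pro_2$ and descends to a unique $c\in\Ci(\Sigma_2)$ with $\mu=s+t+\pro_2^*c$ and $\pro_2^*\partial_\Sigma^*\theta_0=\pro_2^*dc$, equivalently $\partial_\Sigma^*\theta_0=dc$. The identity $\mu=s+t+c$ says exactly that in the trivialization $\m_\jac((g,s),(h,t))=(\m_\Sigma(g,h),\,s+t+c(g,h))$, so $\jac$ is the central extension of $\Sigma$ determined by $c$, with $\pro$ the projection and $\theta=dz+\pro^*\theta_0$; the $2$-cocycle identity for $c$ is then equivalent to associativity of $\m_\jac$, which holds because $\jac$ is a groupoid. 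For the vanishing along $M$: $\m_\jac(\unit(x),\unit(x))=\unit(x)$ together with $\unit_\jac^*z=0$ gives $c=0$ on $(\unit_\Sigma,\unit_\Sigma)(M)$, and pulling $dc=\partial_\Sigma^*\theta_0$ back along $(\unit_\Sigma,\unit_\Sigma)$ gives $\unit_\Sigma^*\theta_0$, which vanishes since $0=\unit_\jac^*\theta=\unit_\jac^*dz+\unit_\Sigma^*\theta_0=\unit_\Sigma^*\theta_0$ (using $\unit^*\theta=0$ from Lemma~\ref{Symplectic potential} and $\unit_\jac^*z=0$).

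\emph{Main obstacle.} The only genuinely delicate step is the first one: converting the qualitative hypothesis ``the fibers of $\pro$ are lines'' into the statement that $\pro$ is a trivial principal $\R$-bundle requires a complete reparametrization of the Reeb flow, freeness of the resulting $\R$-action with $\Sigma$ as orbit space, and paracompactness of $\Sigma$; the normalization of the fiber coordinate over the units additionally uses that $\unit_\Sigma(M)$ is an embedded submanifold. Once the identification $\jac\cong\Sigma\times\R$ is in place, everything downstream is routine — a Cartan-calculus computation of $\theta$ and a single substitution into $\partial^*\theta=0$.
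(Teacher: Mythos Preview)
Your overall strategy matches the paper's: trivialize $\pro$, read off $\theta_0$ from $\theta$, and extract $c$ from multiplicativity. The paper obtains the trivialization slightly differently --- it observes that the groupoid structure makes $\pro$ an affine $\R$-bundle, builds a global section $\zeta$ by partition of unity, and defines $z$ by $\zeta^*z=0$, $R\inner dz=1$ --- but your Reeb-flow/principal-bundle route reaches the same place.

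There is, however, a genuine gap in your final paragraph. The assertion ``$dc$ vanishes along the image of $M$'' means that $dc$ is zero \emph{as a cotangent vector} at each point of $(\unit_\Sigma,\unit_\Sigma)(M)\subset\Sigma_2$; this is what is needed later so that $\hbar^{-2}\Pr_{\Sigma_2}^*c$ extends smoothly across $\hbar=0$. What you establish is only that the \emph{pullback} $(\unit_\Sigma,\unit_\Sigma)^*dc=\unit_\Sigma^*\theta_0$ vanishes, i.e.\ that $dc$ annihilates the $n$-dimensional tangent space to the image inside the $3n$-dimensional $\Sigma_2$. That is strictly weaker. Equivalently, you have shown that $\theta_0$ is normal to $M$ (i.e.\ $\unit_\Sigma^*\theta_0=0$), but not that $\theta_0$ vanishes at every point of $\unit_\Sigma(M)$ as an element of $T^*\Sigma$.

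The fix is a second adjustment that you already have the tool for. You noted that a shift $z\mapsto z-\pro^*\phi$ replaces $\theta_0$ by $\theta_0+d\phi$; choose $\phi$ vanishing along $M$ with normal derivative chosen to cancel the normal part of $\theta_0$ along $M$. Then $\theta_0$ vanishes identically along the unit submanifold, whence $\partial_\Sigma^*\theta_0=dc$ vanishes identically along the image of $M$. This is exactly the extra step the paper performs after its first normalization.
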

\begin{proof}
The groupoid structure of $\jac$ makes $\pro:\jac\to\Sigma$ an affine bundle, but by paracompactness, we can use a locally finite cover of $\Sigma$, local sections,  and a partition of unity to construct a global section $\zeta:\Sigma\to\jac$ of $\pro$. 

Let $\theta_0:=\zeta^*\theta$. By eq.~\eqref{symplectic quotient},
\[
\omega = \zeta^*\pro^*\omega = -\zeta^*d\theta = -d\theta_0\ .
\]
Define $z:\jac\to\R$ to be the (unique) function such that $\zeta^*z=0$ and $R\inner dz=1$. The map
\[
(\pro,z) : \jac\to \Sigma\times\R
\]
is a diffeomorphism.

Consider the difference, $\theta-dz-\pro^*\theta_0$. By construction, $\zeta^*(\theta-dz-\pro^*\theta_0)=0$,
\[
R\inner(\theta-dz-\pro^*\theta_0) = 1-1+0 =0 \ ,
\]
and the Lie derivative is
\[
\Lie_R(\theta-dz-\pro^*\theta_0)=0\ .
\]
As the fibers of $\pro$ are the orbits of $R$, this implies that this difference is $0$ and therefore
\[
\theta = dz+ \pro^*\theta_0\ .
\]

Now define $\tilde c := -\partial_\jac^*z \in \Ci(\jac_2)$. The multiplicativity of $\theta$ means that
\[
0 = \partial_\jac^*\theta = -d\tilde c + \partial_\jac^*\pro^*\theta_0
=-d\tilde c + \pro_2^*\partial_\Sigma^*\theta_0 \ ,
\]
so $\tilde c$ is constant along the fibers of $\pro_2$, therefore there exists $c\in\Ci(\Sigma_2)$ such that $\tilde c = \pro_2^* c$. This $c$ is a cocycle  because $\tilde c$ is a coboundary. 

If $(\pro,z)$ is used to identify $\jac$ with $\Sigma\times\R$, then from this definition of $c$ it is easy to read off the explicit formula for the product on $\Sigma\times\R$.

None of this implies anything about  vanishing of $c$ along $M$. For that, we need to choose the section $\zeta$ more carefully. Firstly, for every $\gamma\in\Sigma$, we can replace $\zeta(\gamma)$ with the midpoint between $\zeta(\gamma)$ and $[\zeta(\gamma^{-1})]^{-1}$. In this way, we can always choose the section so that $\zeta(\gamma^{-1})=[\zeta(\gamma)]^{-1}$. This implies that $\inv^*\theta_0=-\theta_0$ and therefore $\unit^*\theta_0=0$. This means that $\theta_0$ is at least normal to $M$. Moreover, $\zeta\circ\unit_\Sigma = \unit$, so $z$ vanishes along the unit submanifold, $M$, and $c$ vanishes along the image of $M$ in $\Sigma_2$. 

Any change in $\zeta$ can be described as adding a function $f\in\Ci(\Sigma)$ to $\zeta$. This changes $\theta_0$ to $\theta_0+df$. The function $f$ can be easily chosen to vanish along $M$ and have a normal derivative that cancels $\theta_0$ along $M$.
With this choice, $\theta_0$ will vanish along the unit submanifold, and so $dc$ vanishes along the image of $M$.
\end{proof}
So, let $\jac=\Sigma\times\R$ be given explicitly in this way. 
As a manifold,
\[
\Gamma = \EE(\Sigma,M)\times_\hbar \EE(\R,\{0\},0) = \EE(\Sigma,M) \times \R\ .
\]
In this double explosion, the $\R$ factor is rescaled by $\hbar^2$, so as a groupoid, $\Gamma$ is the central extension of $\EE(\Sigma,M)$ determined by $\hbar^{-2}\Pr_{\Sigma_2}^*c$, which is actually a smooth cocycle of $\EE(\Sigma,M)$ because of the vanishing of $c$ and $dc$ along $M$.

Because the symplectic form $\omega_\Gamma$ is exact, $(\Gamma,\omega_\Gamma)$ can be prequantized with the trivial line bundle, $\co\times\Gamma$. The connection is given by a symplectic potential, and the construction of $\Gamma$  gives a canonical symplectic potential,
\[
\EA_\jac \theta = \hbar\,dz + 2z\,d\hbar+\EA_\Sigma\theta_0 \ .
\]
This is multiplicative, but as we shall see, it is better not to have a $dz$ term, so let's instead use the symplectic potential,
\beq
\label{corrected connection}
\theta_\Gamma :=\EA_\jac \theta - d(\hbar z) = z\,d\hbar+\EA_\Sigma\theta_0 \ .
\eeq
This gives the connection $\nabla = d+i\theta_\Gamma$ on $\co\times\Gamma$.

The third ingredient of the prequantization is a cocycle, $\sigma$. This  is a parallel section of the trivial line bundle over $\Gamma_2$ with connection $d-i\partial_\Gamma^*\theta_\Gamma$. It can also be normalized so that $(\unit,\unit)^*\sigma = 1$. With this choice of $\theta_\Gamma$, we have
\[
\partial_\Gamma^*\theta_\Gamma = \partial_\Gamma^*\EA_\jac\theta - \partial_\Gamma^*d(\hbar z)
= -d(\hbar\partial_\jac^*z) = d(\hbar^{-1}\Pr_{\Sigma_2}^*c) \ ,
\]
therefore
\[
\sigma = e^{i \hbar^{-1}\Pr_{\Sigma_2}^*(c)} \ .
\]

Notably, the restriction of this prequantization to the subgroupoid at some fixed $\hbar\neq0$ gives precisely the prequantization of $\Sigma$ with symplectic potential $\theta_0$ (and lifted trivially to $\Sigma\times\R$).

The polarization is simply $\Q = \tilde\P\times T_\co\R$, i.e., it is $\tilde\P$ along $\EE(\Sigma,M)$ and everything along $\R$ (the $z$-direction). The convolution algebra consists of $\Q$-polarized sections of $\Omega^{1/2}_\Q$ with the connection modified by $\theta_\Gamma$. In particular, a polarized section must be parallel in the $z$-direction, but because there is no $dz$ term in $\theta_\Gamma$, it is simply constant. This means that  a polarized section is just the pullback (by $\EE\pro$) of a $\tilde\P$-polarized section over $\EE(\Sigma,M)$.

The convolution product is supposed to be defined by integrating over a transversal to the real part of $\Q$ in each $\EE\t$ fiber. These transversals can be chosen at constant $z$. The result is that $z$ plays no role in the convolution product. We can simply work with $\tilde\P$-polarized sections and convolution over $\EE(\Sigma,M)$. Heuristically,
\[
\cs_\Q(\Gamma,\sigma) = \cs_{\tilde\P}(\EE(\Sigma,M),\sigma)\ .
\]

The subgroupoid of $\EE(\Sigma,M)$ over some fixed, nonzero value of $\hbar$ is a copy of $\Sigma$. The restriction of $\tilde\P$ to this is simply $\P$. The restriction of the prequantization is the line bundle with connection $d+i\hbar\theta_0$ and cocycle $\sigma(\hbar) = e^{i\hbar^{-1}c}$. 
The restriction of a $\tilde\P$-polarized section, $a$, is a $\P$-polarized section $\ev_\hbar a \in \Gamma(\Sigma,\Omega_\P^{1/2})$.
For two polarized sections, $a$ and $b$, $\ev_\hbar(a*b)=(\ev_\hbar a)*(\ev_\hbar b)$. In other words, restriction is a homomorphism 
\[
\ev_\hbar:\cs_\Q(\Gamma,\sigma)\to\cs_\P(\Sigma,\sigma(\hbar))\ .
\]

The subgroupoid of $\EE(\Sigma,M)$ over $\hbar=0$ is $T^*M$ (a bundle of abelian groups). The restriction $\iota^*\tilde\P$ of $\tilde\P$ is a combination of $\D_0^\co$ in the horizontal directions and its annihilator in the vertical directions (thus giving a Lagrangian distribution). The restriction of the prequantization is the trivial line bundle  with connection determined by the Liouville form, $\theta_{T^*M}$, and the trivial cocycle. It appears from examples  \cite[\S8.1]{haw10} that   this twisted polarized convolution algebra, $\cs_{\iota^*\tilde\P}(T^*M,\theta_{T^*M})$,  is isomorphic to $\C_0(M)$.

Again, for a $\tilde\P$-polarized section, $\ev_0(a)\in\Gamma(T^*M,\Omega^{1/2}_{\iota^*\tilde\P})$ is an $\iota^*\tilde\P$-polarized section, and $\ev_0 : \cs_\Q(\Gamma,\sigma) \to \cs_{\iota^*\tilde\P}(T^*M,\theta_{T^*M})$ is a homomorphism, which should be thought of as restriction to $\hbar=0$. 

The idea is that the algebra $\cs_\Q(\Gamma,\sigma)$ should be the algebra of $\C_0$-sections of a continuous field of $\cs$-algebras over $\R$, and $\ev_\hbar$ should be  the evaluation map from  sections to the fiber at $\hbar$.

\subsubsection{Multiply connected fibers}
\label{Multiply}
Now suppose that the fibers of $\pro:\jac\to\Sigma$ are circles. In this case, Bohr-Sommerfeld conditions necessarily come into play, because the polarization $\Q$ includes  the directions tangent to the fibers of $\EE\pro$.

Recall (Sec.~\ref{Prequantization}) that $(\Gamma,\omega_\Gamma)$ can be prequantized by the trivial bundle $\co\times\Gamma$, the connection $\nabla=d+i\EA_\jac\theta$, and the trivial cocycle $1$. The algebra is supposed to be constructed from $\Q$-polarized sections of $\Omega_\Q^{1/2}$, with the connection modified by $\EA_\jac\theta$.  In particular, a $\Q$-polarized section must  be parallel along the fibers of $\EE\pro$. 

Consider what happens over some point $(\gamma,\hbar)\in\Sigma\times\R_*\subset \EE(\Sigma,M)$. If the fiber $(\EE\pro)^{-1}(\gamma,\hbar) = \pro^{-1}(\gamma)\times\{\hbar\}$ is a circle, then parallel transport around this gives a phase factor (holonomy) of
\[
e^{-i\oint_{(\EE\pro)^{-1}(\gamma,\hbar)} \EA_\jac \theta} = e^{-i\hbar^{-1}\oint_{\pro^{-1}(\gamma)}\theta} .
\]
If this phase is not $1$, then a polarized section must necessarily vanish at any point of $(\EE\pro)^{-1}(\gamma,\hbar)$. This is an example of a Bohr-Sommerfeld condition. 

\begin{thm}
\label{Period}
If the fibers of $\pro:\jac\onto\Sigma$ are circles, then the quantity
\beq
\label{period}
\oint_{\pro^{-1}(\gamma)} \theta 
\eeq
is a locally constant function of $\gamma\in\Sigma$.
\end{thm}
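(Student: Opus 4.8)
The plan is to prove that $f(\gamma):=\oint_{\pro^{-1}(\gamma)}\theta$ is unchanged along paths in $\Sigma$, hence \emph{a fortiori} locally constant. The one structural fact that makes this work is the relation $\pro^*\omega=-d\theta$ from \eqref{symplectic quotient}: the exterior derivative of $\theta$ is pulled back from $\Sigma$ along $\pro$, and a $2$-form pulled back along a map that factors through a $1$-dimensional base vanishes identically.

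First I would fix orientations. Since $\omega$ is nondegenerate, $\ker d\theta=\ker T\pro$ is the line field spanned by the Reeb field $R$, and since $R$ is nowhere zero it orients every fibre circle $\pro^{-1}(\gamma)$ coherently; this is also the orientation making $\oint_{\pro^{-1}(\gamma)}\theta>0$, as $R\inner\theta=1$. Next, because $\pro$ is a surjective submersion whose fibres are the periodic orbits of $R$, it is locally a circle bundle: through any point of $\pro^{-1}(\gamma_0)$ choose a local section $\zeta$ of $\pro$; it is transverse to $R$, so the Reeb first-return time to $\zeta$ is a smooth function (in fact it equals $f$, identifying a Reeb period with $\oint\theta$). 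Rescaling the flow parameter by it gives, for any smooth path $\gamma\colon[0,1]\to\Sigma$ staying near $\gamma_0$, a smooth map $\Phi\colon S^1\times[0,1]\to\jac$ with $\pro\circ\Phi(s,t)=\gamma(t)$ and $\Phi(\,\cdot\,,t)$ an orientation-preserving parametrisation of $\pro^{-1}(\gamma(t))$.

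Now comes the key computation: $\Phi^*\theta$ is a \emph{closed} $1$-form on $S^1\times[0,1]$, because $d(\Phi^*\theta)=\Phi^*d\theta=-(\pro\circ\Phi)^*\omega$ and $\pro\circ\Phi$ factors as $S^1\times[0,1]\to[0,1]\xrightarrow{\gamma}\Sigma$, so it pulls the $2$-form $\omega$ back through the $1$-manifold $[0,1]$, giving $0$. The cycles $S^1\times\{0\}$ and $S^1\times\{1\}$ are homologous in $S^1\times[0,1]$ with their coherent orientations, so a closed form has equal integrals over them:
\[
\oint_{\pro^{-1}(\gamma(0))}\theta=\int_{S^1\times\{0\}}\Phi^*\theta=\int_{S^1\times\{1\}}\Phi^*\theta=\oint_{\pro^{-1}(\gamma(1))}\theta .
\]
Hence $f$ is locally constant on $\Sigma$. (Equivalently, without explicit parametrisations: $f=\pro_!\theta$ is the fibrewise integral of $\theta$ over the closed $1$-dimensional fibres, and $df=\pm\pro_!(d\theta)=\mp\pro_!(\pro^*\omega)=0$ by the projection formula together with $\pro_!1=0$.)

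The part I expect to require genuine care is the construction of $\Phi$ — equivalently, checking that $\pro$ is a locally trivial circle bundle and that the Reeb first-return time is smooth \emph{a priori} — together with the bookkeeping that keeps the fibre orientations matched, so that the homology (Stokes) step produces the difference of the two periods with the correct sign. Once $\Phi$ is available, the closedness of $\Phi^*\theta$ and the conclusion are immediate.
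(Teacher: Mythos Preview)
Your proof is correct and is essentially the same as the paper's: both apply Stokes on the cylinder $\pro^{-1}(C)$ over a curve $C\subset\Sigma$ and use that $d\theta=-\pro^*\omega$ vanishes on this cylinder because one tangent direction lies in $\ker T\pro$. The paper states this in three lines without constructing $\Phi$ or discussing orientations, whereas you spell out the local trivialisation and the homology step explicitly; your care about the first-return time and fibre orientations is exactly what the paper's one-line ``the difference of \eqref{period} between the endpoints of $C$ is $\int_{\pro^{-1}(C)}d\theta$'' is tacitly assuming.
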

\begin{proof}
Consider a curve $C\subset\Sigma$. The difference of \eqref{period} between the endpoints of $C$ is equal to 
\[
\int_{\pro^{-1}(C)} d\theta = -\int_{\pro^{-1}(C)} \pro^*\omega \ .
\]
However, $\pro^*\omega$ is degenerate in the direction of the fibers of $\pro$, which is tangent to $\pro^{-1}(C)$, therefore this integral is $0$.
\end{proof}

For example, let $(L,\nabla,\sigma)$ be a prequantization of a symplectic groupoid $(\Sigma,\omega)$ over $M$. Let $\Sigma^\sigma \subset L^*$ be the unit circle bundle --- or equivalently, the principal $\T$-bundle associated to $L$. Let $\pro:\Sigma^\sigma \to \Sigma$ be the bundle projection. The complex conjugate cocycle $\bar\sigma$ defines a multiplication that makes $L^*$ a category and $\Sigma^\sigma$ a subgroupoid.   Let $\theta\in\Omega^1(\Sigma^\sigma)$ be the connection $1$-form; this is normalized so that for any $\gamma\in\Sigma$,
\beq
\label{normal period}
\oint_{\pro^{-1}(\gamma)} \theta = 2\pi \ .
\eeq

Conversely, if the fibers of $\pro :\jac\to\Sigma$ are circles and the contact form is normalized to satisfy \eqref{normal period}, then $\jac$ is isomorphic to $\Sigma^\sigma$ for some prequantization of $(\Sigma,\omega)$. More generally, if $M$ is connected, then by Theorem~\ref{Period} the symplectic structure can always be rescaled to satisfy \eqref{normal period}. This is equivalent to just rescaling $\hbar$ in $\EE(\jac,M,T^*M)$, so we can consider the case that $\jac=\Sigma^\sigma$ without loss of generality.

So, let $\Gamma=\EE(\Sigma^\sigma,M,T^*M)$.  In this case, the fibers of $\EE\pro$ are circles where $\hbar\neq0$ and lines  where $\hbar=0$. For $\hbar\neq0$, the holonomy around the fiber $(\EE\pro)^{-1}(\gamma,\hbar)$ is $e^{-2\pi i /\hbar}$, which is only trivial if $\hbar^{-1}$ is an integer. If there are no other Bohr-Sommerfeld conditions, then this means the Bohr-Sommerfeld subvariety is the subset $\Gamma_\BS\subset\Gamma$ of points where $\hbar=0$ or $\hbar^{-1}\in\Z$.

Let's consider the component  $\Gamma_k\subset\Gamma_\BS$ at $\hbar=k^{-1}$. As a groupoid, this is isomorphic to $\Sigma^\sigma$. The prequantization connection restricts to the connection $d+ik\theta$. The polarization, $\Q$, restricts to $(T\pro)^{-1}\P$.
The Reeb vector field (Def.~\ref{Reeb}) $R\in\X^1(\Sigma^\sigma)$ associated to the contact structure $\theta$ also generates the action of $\T$ on $\Sigma^\sigma$.
Because $T\pro(R)=0$, $R$ is a section of the polarization, so a polarized function $f\in\Ci(\Gamma_k)$ must in particular satisfy
\[
0 = R\inner df + ikR\inner\theta f = R\inner df +ikf\ .
\]
Therefore, $f$ is equivalent to a section of the line bundle $L^{\otimes k}$ over $\Sigma$.

In this way, a polarized section $a\in\Gamma(\Gamma_\BS,\Omega_\Q^{1/2})$, restricts at $\hbar=k^{-1}$ to a polarized section of $L^{\otimes k}\otimes \Omega_\P^{1/2}$. Restriction is a homomorphism  
\[
\ev_{k^{-1}}:\cs_\Q(\Gamma,\EA_\jac\theta)\to\cs(\Sigma,\sigma^k)\ .
\]

The subgroupoid $\Gamma\rvert_{\hbar=0}$ is $T^*M\times\R$, the central extension determined by $\Pi$. The restriction $\iota^*\EA_\jac\theta$ is just the  Liouville form $\theta_{T^*M}$, lifted to $T^*M\times\R$. Again, a polarized section is constant along the $z$-direction here 

Let $a\in\Gamma(\Gamma_\BS,\Omega_\Q^{1/2})$ be a polarized section. The behavior of $a$ is very simple along each $\EE\pro$ fiber. At $\hbar=k^{-1}$, the phase of $a$ goes around $k$ times as we go around a fiber of $\EE\pro$. As $k\to\infty$ and $\hbar\to0$, these fibers expand out to lines faster than the phase changes. The result is that $\ev_0(a)$ is independent of $z$ and is a polarized section. Again, 
\[
\ev_0:\cs_\Q(\Gamma,\EA_\jac\theta) \to \cs_{\iota^*\tilde\P}(T^*M,\theta_{T^*M})
\]
is a homomorphism.

This shows that in constructing a strict deformation quantization by geometric quantization, it is not necessary to put the set, $I$, of $\hbar$-values in by hand. Geometric quantization of a Heisenberg-Poisson manifold will take integrality conditions into account precisely where they are needed.

\section{Examples}
\label{Examples}
\subsection{Constant Poisson structure}
Let $V$ be a finite dimensional, real vector space and $\Pi\in\Wedge^2V$. If we identify $\Pi$ with a constant bivector on $V$, then $(V,\Pi)$ is a Poisson manifold.
The $\s$-simply-connected symplectic groupoid $(\Sigma,\omega)$ integrating $(V,\Pi)$ can be constructed as the semidirect product $V^*\ltimes V$, where $V^*$ acts on $V$ by translations, via the linear map $\#_\Pi:V^*\to V$. 

Let $x^i$ be coordinates on $V$ and $y_i$ coordinates on $V^*$, although I will mainly use index-free notation. In particular, $\#_\Pi y$ means $\Pi^{ji}y_j$ and $\Pi(y,y')=\Pi^{ij}y_iy'_j$. 

Identifying $\Sigma$ with $V\times V^*$, the unit and inverse are
\[
\unit_\Sigma(x)=(x,0) \ , \qquad
\inv_\Sigma(x,y)=(x,-y)\ .
\]
The source and target maps are
\[
\s_\Sigma(x,y) = x - \tfrac12 \#_\Pi y \ , \qquad
\t_\Sigma(x,y) = x + \tfrac12 \#_\Pi y \ .
\]
Identifying the set of composable pairs, $\Sigma_2$ with $V\times V^*\times V^*$, the other structure maps are
\begin{gather*}
\pr_{1\, \Sigma}(x,y,y') = (x + \tfrac12 \#_\Pi y',y) \\
\pr_{2\, \Sigma}(x,y,y') = (x - \tfrac12 \#_\Pi y,y') \\
\m_\Sigma(x,y,y') = (x,y+y') .
\end{gather*}
The symplectic form is $\omega = dx^i\wedge dy_i$. 

The $\s$-simply-connected contact groupoid $(\jac,\theta)$ integrating $(V,\Pi)$ is an extension of $\Sigma$ by $\R$. I shall identify $\jac$ with $V\times V^*\times \R$, and so we need one more coordinate, $z$. The unit, inverse, source, and target maps for $\jac$ are
\begin{gather*}
\unit(x)=(x,0,0), \qquad
\inv(x,y,z) = (x,-y,-z) ,\\
 \s(x,y,z) = x - \tfrac12 \#_\Pi y \ , \qquad
\t(x,y,z) = x + \tfrac12 \#_\Pi y \ .
\end{gather*}
Identifying $\jac_2$ with $V\times V^*\times V^* \times \R^2$, the remaining structure maps are
\begin{gather*}
\pr_1(x,y,y',z,z') = (x + \tfrac12 \#_\Pi y',y,z) \\
\pr_2(x,y,y',z,z') = (x - \tfrac12 \#_\Pi y,y',z') \\
\m(x,y,y',z,z') = (x,y+y',z+z'-\tfrac12\Pi[y,y']) \ .
\end{gather*}
The contact form is
\[
\theta = dz + y_idx^i\ .
\]

In this form, $(\jac,V,T^*V)= (V\times V^*\times\R,V,V^*)$ and $(\jac_2,V,T^*V\oplus T^*V) = (V\times V^*\times V^*\times \R^2,V,V^*\oplus V^*)$ are  explosive charts, so the structure maps for $\Gamma$ can be written down immediately:
\begin{gather*}
\EE\unit(x,\hbar)=(x,0,0,\hbar), \qquad
\EE\inv(x,y,z,\hbar) = (x,-y,-z,\hbar) ,\\
 \EE\s(x,y,z,\hbar) = (x - \tfrac12\hbar\, \#_\Pi y,\hbar) \ , \qquad
\EE\t(x,y,z,\hbar) = (x + \tfrac12\hbar\, \#_\Pi y,\hbar) \ , \\
\EE\pr_1(x,y,y',z,z',\hbar) = (x + \tfrac12\hbar\, \#_\Pi y',y,z,\hbar) \\
\EE\pr_2(x,y,y',z,z',\hbar) = (x - \tfrac12\hbar\, \#_\Pi y,y',z',\hbar) \\
\EE\m(x,y,y',z,z',\hbar) = (x,y+y',z+z'-\tfrac12\Pi[y,y'],\hbar) .
\end{gather*}
\begin{remark}
This is compatible with a grading where $\hbar$, $x$, $y$ and $z$ have degrees $-1$, $0$, $1$, and $2$, respectively. The factors of $\hbar$ come in where the degree needs to be corrected.
\end{remark}

The exploded contact form is 
\[
\EA_\jac \theta = y_i\,dx^i+\hbar\,dz+2z\,d\hbar \ ,
\]
which gives the symplectic form on $\Gamma$,
\[
\omega_\Gamma = -d(\EA_\jac \theta) = dx^i\wedge dy_i + d\hbar\wedge dz\ .
\]

\begin{remark}
This fits into the general description in Section~\ref{Simply connected}. The symplectic potential on $\Sigma$ is $\theta_0 = y_idx^i$; $\jac$ is the central extension of $\Sigma$ determined by the cocycle 
\[
c(x,y,y') = -\tfrac12\Pi(y,y') \ .
\]
The natural projection is $\Pr_{\Sigma_2}(x,y,y',\hbar) = (x,\hbar y,\hbar y')$, so 
\[
(\Pr_{\Sigma_2}^*c)(x,y,y',\hbar) = -\tfrac12\hbar^2\Pi(y,y') \ ,
\] 
and $\Gamma$ is the central extension of $\EE(\Sigma,V)$ determined by  the cocycle
\[
\hbar^{-2}(\Pr_{\Sigma_2}^*c)(x,y,y',\hbar) = -\tfrac12\Pi(y,y')\ .
\]
\end{remark}

\begin{remark}
If we view $V^*$ as an Abelian Lie algebra, then $\Pi$ is a cocycle and the central extension is a Heisenberg Lie algebra. The dual of this Heisenberg Lie algebra is the Heisenberg-Poisson manifold, $\Hei(V,\Pi)$. The symplectic groupoid $(\Gamma,\omega_\Gamma)$ is the cotangent bundle of the corresponding Heisenberg group.
\end{remark}

\subsubsection{Polarizations and quantization}
Now, let $P\subset V_\co \oplus V^*_\co$ be Lagrangian, multiplicative, Hermitian, and positive (in the sense of Definition~\ref{Positive}). This defines a \emph{constant} polarization $\P = P\times\Sigma$ of $\Sigma$. There are 3 simple special cases, and the general case is a product of these 3 types.

\emph{First case:} 
If $P=V^*_\co\subset V_\co\oplus V^*_\co$, then (to be Lagrangian) we must have $\Pi=0$, and so $\Gamma$ is a trivial bundle of abelian groups over $\Hei V$. This polarization is just  (the complexification of) the kernel foliation of the fibration $\s=\t:\Gamma \to \Hei V$.
The half-form bundle is  the trivial line bundle.  

The symplectic potential
\[
\theta_\Gamma = \EA_\jac\theta - d(\hbar z) = y_idx^i+ z\,d\hbar
\]
is adapted to this polarization. Because $\Pi=0$, $\theta_\Gamma$ is multiplicative, therefore there is a prequantization of $\Gamma$ by the trivial line bundle with connection given by $\theta_\Gamma$ and the trivial cocycle.

Because this is a simple real polarization and the cocycle is trivial, the twisted polarized convolution \cs-algebra is just the \cs-algebra of the quotient groupoid, which is  the manifold $\Hei V = V\times \R$:
\[
\cs_\Q(\Gamma,\theta_\Gamma) = \C_0(V\times \R) \ .
\]

The part at $\hbar=0$ is similarly trivial,
\[
\cs_{\iota^*\Q}(\Gamma\rvert_{\hbar=0},y_idx^i) = \C_0(V)\ .
\]
Because $m = \rk \P_0 - \rk \D_0 = 0$, the restriction map
\[
\ev_0 : \C_0(V\times\R) \to \C_0(V)
\]
is just evaluation at $0\in\R$.

\emph{Second case:}
If $P = V_\co\subset V_\co\oplus V^*_\co$, then the half-form bundle has fiber $\Wedgem V$. This choice of polarization is consistent with any constant Poisson structure, $\Pi$. 

In this case, the symplectic potential 
\[
\theta_\Gamma = \EA_\jac \theta - d(y_ix^i+\hbar z) = -x^i dy_i + z\,d\hbar
\]
is adapted to this polarization, because it contains no $dy_i$ or $dz$. The term $\EA_\jac\theta$ is automatically multiplicative and 
\[
\partial_\Gamma^*(y_ix^i+\hbar z) = -\tfrac12\hbar \Pi(y, y')\ ,
\]
so $\partial_\Gamma^*\theta_\Gamma = d\left[\frac12\hbar \Pi(y, y')\right]$. The prequantization of $\Gamma$ is the trivial line bundle with connection $d+i\theta_\Gamma$ and a cocycle, $\sigma$, which is a covariantly constant section of the trivial line bundle over $\Gamma_2$ with connection $d-i\partial_\Gamma^*\theta_\Gamma$, therefore
\[
\sigma(x,y,y',z,z',\hbar) = e^{\frac{i}2 \hbar\Pi(y, y')}\ .
\]

This is another simple real polarization. The quotient groupoid is $V^*\times\R$, a trivial bundle of Abelian groups over $\R$. The cocycle $\sigma$ is just the pullback of a cocycle $\sigma_0$ on the quotient groupoid,
\[
\sigma(x,y,y',z,z',\hbar) = \sigma_0(y,y',\hbar)\ .
\]
The twisted, polarized convolution \cs-algebra is
\[
\cs_\Q(\Gamma,\sigma) = \cs(V^*\times\R,\sigma_0)\ .
\]

At $\hbar=0$, $\sigma_0$ is trivial, so
\[
\cs_{\iota^*\Q}(\Gamma\rvert_{\hbar=0},\sigma_0) = \cs(V^*) \ .
\]
The Fourier transform gives an isomorphism $\cs(V^*)\cong\C_0(V)$. Because $m=0$ again,
the evaluation map
\[
\ev_0 : \cs(V^*\times\R,\sigma_0) \to \cs(V^*) \cong \C_0(V)
\]
is simply the restriction to $\hbar=0$.

The result is that $\cs(V^*\times\R,\sigma_0)$ is the algebra of sections of a continuous field of \cs-algebras over $\R$. The fiber over $0$ is $\C_0(V)$, and the fiber over any other $\hbar$ is a twisted convolution algebra of $V^*$, which is isomorphic to 
\[
\K[L^2(\R^{\frac12\rk\Pi})]\otimes \C_0(\R^{\dim\ker\Pi})\ .
\]

\emph{Third case:}
If $P\cap\bar P =0$, then the Poisson structure is symplectic and the polarization is determined by a constant \Kahler\ structure on $V$. 
In this case $m = \dim P = \frac12\dim V$.
The half-form bundle has fiber a square root of $\Wedge^{2m} V = \Wedgem V$.

The most convenient symplectic potential in this case is
\[
\theta_\Gamma = \EA_\jac\theta - d(\hbar z) \ .
\]
The coboundary of $\hbar z$ is
\[
\partial_\Gamma^*(\hbar z) = \tfrac12\hbar \Pi(y,y')
\]
so the cocycle in this prequantization is
\[
\sigma(x,y,y',z,z',\hbar) = e^{-\frac{i}2 \hbar\Pi(y, y')}\ .
\]

Because the polarization $\P$ is nontrivially positive, the polarization $\Q$ of $\Gamma$ is positive for $\hbar\geq0$, but not for $\hbar<0$, so let $\Gamma_{\geq0}$ be the $\hbar\geq0$ part. The convolution algebra consists of $\Q$-polarized $\sqrt{\Wedgem V}$-valued functions over $\Gamma_{\geq0}$ with the connection $d+i\theta_\Gamma$.

As in the general case of Section~\ref{Simply connected}, these polarized functions are constant in the $z$-direction. They are thus equivalent to polarized functions over the $\hbar\geq0$ part of $\EE(\Sigma,V)$ and we can ignore the $z$ coordinate.

The correction factor $\varepsilon_\Q \in\Ci(\Gamma,\Wedgem V)$ is defined from the restriction of $\omega$ to $T^\t\Gamma$, which is $-\hbar\Pi$ thought of as a $2$-form on $V^*$, so $\varepsilon_\Q = \hbar^m\varepsilon$, where 
\[
\varepsilon = \frac{1}{m!} \left(\frac{\Pi}{2\pi}\right)^m \in \Wedge^{2m} V\ .
\]
\begin{remark}
Rather confusingly, it now appears that $\varepsilon_\Q$ vanishes at $\hbar=0$, whereas in Section~\ref{Half-forms}, it appeared that $\varepsilon_\Q$ diverges as $\hbar\to0$. The difference comes from the way that $\Gamma$ is presented.

Here, $\Gamma \cong V\times V^*\times\R$, where the groupoid structure comes from $V^*$ acting on $V$ in a way that depends upon $\hbar\in\R$. Elements of $\Wedgem V$ are regarded as volume forms on $V^*$. 

On the other hand, the $\hbar\neq0$ part of $\Gamma$ can be identified with $V\times V^*\times\R_*$, where the action of $V^*$ on $V$ is independent of $\hbar\in\R_*$. To transform between these pictures, there is a rescaling of $V^*$ by $\hbar$, which transforms $\Wedgem V$ by a factor of $\hbar^{-m}$. This is the difference between $\varepsilon_\Q$ vanishing and diverging.
\end{remark}

Now, let $a,b\in  \Ci(\Gamma_{\geq0},\sqrt{\Wedgem V})$ be two polarized functions. Because, $P$ has no real directions, the $z$-direction is the only real direction of $\Q$ (at $\hbar\neq0$). This means that the convolution product $a*b$ is defined by integrating
\[
\EE\pr_1^*a \cdot \EE\pr_2^*b\cdot \sigma \cdot \EE\pr_1^*(\sqrt{\varepsilon_\Q})
\]
over the fibers of $\EE\m$ in $\EE(\Sigma,V)$. This leads to the formula, \begin{multline}
\label{product}
(a*b)(x,y,\hbar) =\\ \hbar^{m/2}\varepsilon^{1/2}\int_{y'+y''=y} a(x+\tfrac12\hbar\#_\Pi y'',y',\hbar) b(x-\tfrac12\hbar\#_\Pi y',y'',\hbar) e^{-\frac{i}2 \hbar\Pi(y', y'')} 
\end{multline}

At $\hbar=0$, the polarization $\iota^*\Q$ points in the $y$ and $z$-directions. The quotient groupoid (leaf space of $\iota^*\Q$) is just $V$, as a trivial groupoid, and the restriction of the cocycle $\sigma$ is trivial, therefore
\[
\cs_{\iota^*\Q}(\Gamma\rvert_{\hbar=0},\sigma) = \C_0(V) \ .
\]

Again, let $a \in \Ci(\Gamma_{\geq0},\sqrt{\Wedgem V})$ be a $\Q$-polarized function. The restriction to $\hbar=0$ is defined (Def.~\ref{Restriction}) as 
\[
\ev_0(a)(x,y) = \lim_{\hbar\to0^+} \hbar^{-m/2}\varepsilon^{-1/2} a(x,y,\hbar)\ .
\]
The existence of this limit is a condition on $a$, but the $\Q$-polarization of $a$ implies that this is independent of $y$, so it is $\iota^*\Q$-polarized. Indeed,
\[
\ev_0 : \cs_\Q(\Gamma,\sigma) \to \C_0(V) 
\]
is a ${}^*$-homomorphism.

The simplest example of this is not actually an element of the algebra. There is no unit in this convolution algebra, but there is a unit multiplier given by a smooth kernel that merely fails to fall off in the $x$-directions. This is
\[
K(x,y,\hbar) =  e^{-\frac14\hbar\,\norm{y}^2} \hbar^{m/2}\varepsilon^{1/2}\ ,
\]
where $\norm{y}$ is the norm in the \Kahler\ metric on $V^*$. Note that this is translation invariant in the sense that it does not depend on $x$. The shape of $K$ is easily determined from translation invariance and polarization. The normalization is determined by the condition that $K*K=K$.

The restriction to $\hbar=0$ is
\[
\ev_0(K) = \lim_{\hbar\to0^+} e^{-\frac14\hbar\,\norm{y}^2} = 1\ ,
\]
as it should be. The actual elements of the algebra behave similarly as $\hbar\to0$.

\begin{remark}
In particular, this justifies the power of $2\pi$ included in Definition~\ref{Epsilon factor} --- it normalizes a Gaussian integral in $K*K$. There have to be powers of $2\pi$ somewhere, and this is the most uniform way of distributing them.
\end{remark}

\subsection{The symplectic sphere}
\label{Sphere}
Consider the symplectic manifold $(S^2,\epsilon)$, where  $\epsilon\in\Omega^2(S^2)$ is half the volume form of the unit sphere. With this normalization, the symplectic volume is $2\pi$.

Everything that I will do here is equivariant with respect to the $\SU(2)$ action on $S^2$. Any (complex) equivariant vector bundle is isomorphic to a direct sum of equivariant line bundles. Equivariant line bundles are classified by irreducible representations of $\T=\U(1)$ --- and thus by integers.
\begin{definition}
For any $k\in\Z$, let $L^k$ denote the equivariant line bundle over $S^2$ associated to the $\T$-representation, $z\mapsto z^{-k}$ and equipped with an equivariant connection and fiberwise, Hermitian inner product.

Let $(j)$ denote the $2j+1$-dimensional Hilbert space carrying the spin $j$ representation of $\SU(2)$. 
\end{definition}

A space of sections of $L^k$ is a direct sum, 
\[
(\abs{\tfrac{k}{2}}) \oplus (\abs{\tfrac{k}{2}}+1) \oplus (\abs{\tfrac{k}{2}}+2) \oplus \dots\ .
\]
For the space of polynomial sections, this is the algebraic direct sum, for the space of square-integrable sections it is the Hilbert space direct sum, and for the space of smooth sections it is the space of sequences whose norms fall off faster than any power.

\subsubsection{Standard geometric quantization}
First, for any integer $k$, I will consider the construction of a Hilbert space by  geometric quantization of $(S^2,k\epsilon)$ with the \Kahler\ polarization. The line bundle $L^k$ has curvature $k\epsilon$, so $L^k$ will be the prequantization line bundle.

The complex structure on $S^2$ decomposes the complexified tangent bundle into a direct sum $T_\co S^2 = \bar\F\oplus\F$ of the holomorphic and antiholomorphic tangent bundles; these are just the line bundles $\F \cong L^{-2}$ and $\bar\F \cong L^2$. The \Kahler\ polarization of $S^2$ is defined by the antiholomorphic tangent bundle, $\F$. 

To apply geometric quantization, we need a half-form bundle \cite{sni,woo}. In general, that is a square root of the maximum exterior power of the annihilator $\F^\perp \subset T_\co^* S^2$. In this case, $\F^\perp \cong L^{-2}$ is a line bundle, so it is its own maximum exterior power, and its unique equivariant square root is  $\sqrt{\F^\perp}\cong L^{-1}$. The tensor product of the prequantization bundle with the half-form bundle is $L^k\otimes\sqrt{\F^\perp} \cong L^{k-1}$.

These are isomorphisms in the category of equivariant vector bundles, but this does not take into account the inner products. For $x\in S^2$, if $\alpha,\beta\in\F^\perp_x$ then the natural Hermitian inner product is $-i\bar\alpha\wedge\beta \in \Wedge^2T^*_x S^2$. On the other hand, the inner product in $L^{-2}$ is $\co$-valued.

Let $\psi,\varphi \in \Gamma(S^2,L^k\otimes\sqrt{\F^\perp})$. 
To construct a Hilbert space, we need a $2$-form-valued inner product, but the intrinsic inner product $\bar\psi\varphi$ is $\sqrt{\Wedge^2T^*_\co S^2}$-valued. The natural correction factor, determined by the symplectic form $k\epsilon$, is $\sqrt{k\epsilon/2\pi}$. The Hilbert space inner product is therefore
\[
\langle\psi\vert\varphi\rangle := \int_{S^2} \bar\psi \varphi  \sqrt{k\epsilon/2\pi} \  .
\]

Now, define the Hilbert space $\Hi_k$ to be the space of holomorphic sections of $L^{k}\otimes\sqrt{\F^\perp}$. That is the kernel of the antiholomorphic derivative map, $\bar\partial$, which maps  sections of $L^{k}\otimes\sqrt{\F^\perp}\cong L^{k-1}$ to sections of $\F^*\otimes L^{k}\otimes\sqrt{\F^\perp} \cong L^{k+1}$, so 
\begin{align*}
\bar\partial : &\Gamma(S^2,L^{k-1}) \to \Gamma(S^2, L^{k+1}) \\
& (\abs{\tfrac{k-1}2}) \oplus (\abs{\tfrac{k-1}2}+1)\oplus \dots 
\to (\abs{\tfrac{k+1}2}) \oplus (\abs{\tfrac{k+1}2}+1)\oplus \dots\ .
\end{align*}
Since $\bar\partial$ is equivariant, this shows that if $k>0$, then $\Hi_k \supseteq (\tfrac{k-1}{2})$. In fact $\Hi_k = (\tfrac{k-1}{2})$ for $k>0$ and $\Hi_k=0$ otherwise. Note that $\dim \Hi_k = k$.

This standard geometric quantization construction gives the quantization of $S^2$ with symplectic form $k\epsilon$ as the algebra, $\Li(\Hi_k)$, of operators on $\Hi_k$, i.e., $k\times k$ matrices.

\begin{definition}
\label{Coherent}
Let $\psi_o \in \Hi_k \cong (\tfrac{k-1}2)$ be a normalized highest weight vector. Let $o=(0,0,1)\in S^2$ be the north pole. The \emph{coherent states}  map (called the \emph{covariant symbol} by Berezin \cite{ber}) is the linear map 
\[
I_k : \Li(\Hi_k) \to \C(S^2) .
\]
such that 
\[
I_k(a)(go) = \langle g\psi_o\rvert a \lvert g\psi_o\rangle ,
\]
for any $a\in \Li(\Hi_k)$ and $g\in\SU(2)$.
\end{definition}
\begin{remark}
This is well defined, because $\SU(2)$ acts transitively on $S^2$, and $\psi_o$ is invariant up to a phase under the subgroup $\T\subset\SU(2)$ that leaves $o$ fixed. The map $I_k$  is \emph{not} a homomorphism but provides a way of comparing elements of this matrix algebra with functions on $S^2$.
\end{remark}

\subsubsection{The groupoid approach}
The algebra $\Li(\Hi_k)$ can also be constructed directly, using the symplectic groupoid $(\Sigma,\omega)$, where $\Sigma = \Pair(S^2)\cong S^2\times S^2$ and $\omega = k(\t^*\epsilon - \s^*\epsilon)$. 
I use  $\boxplus$ and $\boxtimes$ to denote the outer direct sum and tensor product,  each of which takes two vector bundles over $S^2$ and gives a vector bundle over $S^2\times S^2$.

The prequantization line bundle of $\Sigma$ is $L^k\boxtimes L^{-k}$. The cocycle is trivial.

The polarization $\P\subset T_\co \Sigma$ is 
\[
\P := \F\boxplus \bar \F ,
\]
and $T^\t \Sigma = 0 \boxplus T S^2$, so
\[
T^\t_\co\Sigma/\P^\t = 0 \boxplus (T_\co S^2/\bar\F)
\]
and
\[
T^\s_\co\Sigma/\P^\s = (T_\co S^2/\F) \boxplus 0 .
\]
The half-form bundle $\Omega_\P^{1/2}$ is a square root of $\Omega_\P$, which is the dual of the maximum exterior power of 
\beq
\label{normal}
T^\t_\co\Sigma/\P^\t \oplus T^\s_\co\Sigma/\P^\s  
= (T_\co S^2/\F) \boxplus (T_\co S^2/\bar\F) .
\eeq
The dual of \eqref{normal} is $\F^\perp\boxplus \bar\F^\perp$ and $\rk \F^\perp = 1$, so
\[
\Omega_\P = \F^\perp\boxtimes \bar\F^\perp 
\]
and finally 
\beq
\label{half-form S2}
\Omega^{1/2}_\P =\sqrt{ \F^\perp} \boxtimes \sqrt{\bar\F^\perp} 
\cong L^{-1}\boxtimes L^1 .
\eeq

Note that because the polarization is totally complex, the real part is $\D=0$, and $\E=T_\co\Sigma$. 

So, the algebra, $A_k = \cs_{\F\boxplus\bar\F}(\Pair S^2,L^k\boxtimes L^{-k})$ should consist of polarized sections of $(L^k\boxtimes L^{-k})\otimes\Omega_\P^{1/2} \cong L^{k-1}\boxtimes L^{-k+1}$. Polarized, in this case, means holomorphic, where the complex structure is reversed on the second factor of $S^2$.
For $k>0$, this is 
\[
A_k = \Hi_k \otimes\bar\Hi_k
\]
and $0$ for $k\leq0$.

In this case, $T^\t\Sigma = 0\boxplus TS^2$, so the restriction of the symplectic form on $\Sigma$ to this bundle is $-k\epsilon$. This gives the correction factor $\varepsilon_\P(x,x') = k\epsilon(x')/2\pi$.

Explicitly, for any $a,b\in A_k$ and $x,x'\in S^2$,
\[
(a*b)(x,x') = \int_{S^2} a(x,x'')b(x'',x') \sqrt{\tfrac{k}{2\pi}\epsilon(x'')}\  .
\]
This is holomorphic in $x$ because $a(x,x'')$ is, and it is antiholomorphic in $x'$ because $b(x'',x')$ is, therefore $a*b\in A_k$. 

The involution on $A_k$ is defined simply by $a^*(x,x') = \bar a(x',x)$.

It is simple to verify that this product and involution are the same as those in $\Li(\Hi_k)$ by looking at the multiplication of simple tensor products.

\begin{thm}
For any $a\in A_k$,
\[
I_k(a) = \frac{\unit^*(a)}{\sqrt{k\epsilon/2\pi}}\ .
\]
\end{thm}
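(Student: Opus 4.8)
The plan is to reduce the identity to a single normalization at the north pole, using $\SU(2)$-equivariance together with the reproducing-kernel property of the highest weight vector. Throughout assume $k>0$, since $A_k=0$ otherwise. First I would check that the right-hand side is well defined: pulling back along the diagonal $\unit:S^2\to\Sigma$, one has $\unit^*(L^k\boxtimes L^{-k})=L^k\otimes\overline{L^k}$, which the fibrewise Hermitian metric trivializes, while $\unit^*\Omega_\P=\F^\perp\otimes\bar\F^\perp=\Wedge^2T^*_\co S^2$; hence $\unit^*(a)$ is naturally a section of $\sqrt{\Wedge^2T^*_\co S^2}$, and dividing by the positive square root $\sqrt{k\epsilon/2\pi}$ of the same line bundle returns a genuine function. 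Both $a\mapsto I_k(a)$ and $a\mapsto\unit^*(a)/\sqrt{k\epsilon/2\pi}$ are linear in $a$, so it suffices to treat $a=\psi\boxtimes\bar\varphi$ (the section $(x,x')\mapsto\psi(x)\otimes\overline{\varphi(x')}$), which corresponds to the rank-one operator $\lvert\psi\rangle\langle\varphi\rvert$.

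Next I would use equivariance. Both maps $A_k\to\C(S^2)$ are $\SU(2)$-equivariant: the coherent-states map because $g\psi_o$ is the coherent state at $go$, and the other because $\unit$, all the relevant bundles, and $\epsilon$ are $\SU(2)$-invariant. Consequently $I_k(a)(go)=I_k(g^{-1}ag)(o)$ and $\unit^*(a)(go)/\sqrt{k\epsilon/2\pi}(go)=\unit^*(g^{-1}{\cdot}a)(o)/\sqrt{k\epsilon/2\pi}(o)$, where $g^{-1}{\cdot}a$ is the section corresponding to $g^{-1}ag$; since $g^{-1}ag$ ranges over all of $A_k$, it is enough to prove the identity at the point $o$. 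There $I_k(\psi\boxtimes\bar\varphi)(o)=\langle\psi_o\vert\psi\rangle\langle\varphi\vert\psi_o\rangle=\langle\psi_o\vert\psi\rangle\overline{\langle\psi_o\vert\varphi\rangle}$. For the other side I would establish the reproducing-kernel property at $o$: letting $\T\subset\SU(2)$ be the isotropy group of $o$, the evaluation $\ev_o:\Hi_k\to(L^k\otimes\sqrt{\F^\perp})_o$ is $\T$-equivariant, its one-dimensional image carries the top $\T$-weight of $\Hi_k\cong(\tfrac{k-1}{2})$ (equivalently, the highest weight section does not vanish at $o$), and all weight spaces of $\Hi_k$ are one-dimensional, so $\ker\ev_o$ is the sum of the strictly lower weight spaces, i.e.\ $\psi_o^\perp$. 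Hence there is a vector $c_o\in(L^k\otimes\sqrt{\F^\perp})_o$ with $\psi(o)=c_o\langle\psi_o\vert\psi\rangle$ for all $\psi$, and taking $\psi=\psi_o$ gives $c_o=\psi_o(o)$. Therefore $\unit^*(\psi\boxtimes\bar\varphi)(o)=\psi(o)\otimes\overline{\varphi(o)}=\langle\psi_o\vert\psi\rangle\overline{\langle\psi_o\vert\varphi\rangle}\,\bigl(\psi_o(o)\otimes\overline{\psi_o(o)}\bigr)$, and the whole theorem reduces to the single statement $\psi_o(o)\otimes\overline{\psi_o(o)}=\sqrt{k\epsilon/2\pi}\,(o)$ (under the metric trivialization of $L^k\otimes\overline{L^k}$).

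Finally I would prove this normalization by a balanced-metric argument. Choose an orthonormal basis $\{\psi_i\}$ of $\Hi_k$ with $\psi_1=\psi_o$; the diagonal Bergman section $B:=\sum_i\overline{\psi_i}\,\psi_i$ is independent of the basis, hence $\SU(2)$-invariant, hence $B=b\sqrt{k\epsilon/2\pi}$ for a constant $b\geq0$, both being invariant positive sections of $\sqrt{\Wedge^2T^*_\co S^2}$. Integrating against $\sqrt{k\epsilon/2\pi}$ and using the inner product of the paper, $\int_{S^2}B\sqrt{k\epsilon/2\pi}=\sum_i\langle\psi_i\vert\psi_i\rangle=\dim\Hi_k=k$, whereas $\int_{S^2}b\,(k\epsilon/2\pi)=b\,k\,(2\pi)^{-1}\!\int_{S^2}\epsilon=bk$ since $\int_{S^2}\epsilon=2\pi$; thus $b=1$ and $B=\sqrt{k\epsilon/2\pi}$. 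Evaluating at $o$, the lower weight basis vectors vanish there, so $B(o)=\overline{\psi_o(o)}\,\psi_o(o)$, giving $\psi_o(o)\otimes\overline{\psi_o(o)}=\sqrt{k\epsilon/2\pi}\,(o)$ and completing the proof. I expect the only delicate points to be the bookkeeping of the canonical identifications $L^k\otimes\overline{L^k}\cong\co$ (via the metric) and $\sqrt{\F^\perp}\otimes\sqrt{\bar\F^\perp}\cong\sqrt{\Wedge^2T^*_\co S^2}$, together with positivity so that all the square roots involved are real, and the $\T$-weight matching that pins down $\psi_o$ as the section not vanishing at $o$; everything else is routine. It is reassuring that the factor of $2\pi$ inserted in Definition~\ref{Epsilon factor} is precisely what forces $b=1$, in accordance with the remark there.
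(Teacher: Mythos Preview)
Your proof is correct and shares the same overall architecture as the paper's: reduce by linearity to rank-one elements, use equivariance to reduce to the north pole, observe via $\T$-weights that evaluation at $o$ is proportional to pairing with $\psi_o$, and then pin down the single remaining normalization constant. The difference lies in how that constant is determined. The paper uses the \emph{resolution of identity}: the operator $\int_{S^2}\lvert g\psi_o\rangle\langle g\psi_o\rvert\,\epsilon(go)$ is $\SU(2)$-invariant on the irreducible space $\Hi_k$, hence a scalar by Schur's lemma, and a trace count gives $2\pi/k$; inserting this into $\langle\varphi\vert\varphi\rangle$ yields $C=\sqrt{k/2\pi}$. You instead use the \emph{Bergman kernel}: the diagonal section $B=\sum_i\bar\psi_i\psi_i$ is basis-independent and $\SU(2)$-invariant on $S^2$, hence a constant multiple of $\sqrt{k\epsilon/2\pi}$, and integrating fixes that multiple to be $1$. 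These are genuinely dual arguments---one averages coherent states over the manifold to produce an operator identity, the other sums over a Hilbert-space basis to produce a pointwise identity on the manifold---and each exploits the same numerical coincidence $\dim\Hi_k=k=\tfrac{k}{2\pi}\int_{S^2}\epsilon$. Your route has the mild advantage of making the role of the $2\pi$ in Definition~\ref{Epsilon factor} slightly more transparent, as you note; the paper's route is perhaps more familiar from the coherent-states literature.
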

\begin{proof}
First, consider the map $\Hi_k\to\co$ given by $\varphi\mapsto \langle\psi_o\vert\varphi\rangle$ and the map $\Hi_k\to L^k_o\otimes\sqrt{\F^\perp}\cong L^{k-1}_o$ given by evaluation $\varphi\mapsto\varphi(o)$. Equivariance under the $\T$-action implies that these must be proportional to each other, and so
\beq
\label{proportional}
\bar\varphi(o) \varphi(o) = C \langle\varphi\vert\psi_o\rangle\langle\psi_o\vert\varphi\rangle \sqrt{\epsilon(o)}
\eeq
for some constant $C\in\R$. 

Now consider the expression
\beq
\label{projector}
\lvert g\psi_o\rangle\langle g\psi_o\rvert \ .
\eeq
Because $U(1)$ leaves $\psi_o$ invariant up to phase, and phases cancel in this expression, this is really a function of $go\in S^2$. This means that we can meaningfully write
\beq
\label{average projector}
\int_{S^2} \lvert g\psi_o\rangle\langle g\psi_o\rvert \, \epsilon(go) \ .
\eeq
The expression \eqref{average projector} is, by construction, an $\SU(2)$-invariant operator on $\Hi_k$, which carries an irreducible representation, therefore \eqref{average projector} is a multiple of the identity. The expression \eqref{projector} has rank $1$ and $\int_{S^2}\epsilon = 2\pi$, therefore \eqref{average projector} has trace $2\pi$. Since $\dim\Hi_k=k$, this implies that 
\beq
\label{average formula}
\int_{S^2} \lvert g\psi_o\rangle\langle g\psi_o\rvert \, \epsilon(go) = \frac{2\pi}k\ .
\eeq

To determine the coefficient, $C$, consider an arbitrary $\varphi\in\Hi_k$ and insert the identity \eqref{average formula} into the computation of the norm:
\begin{align*}
\langle\varphi\vert\varphi\rangle &= \frac{k}{2\pi} \int_{S^2}  \langle\varphi\vert g\psi_o\rangle\langle g\psi_o\vert\varphi\rangle \, \epsilon(go) \\
&= \frac{k}{2\pi C} \int_{S^2} \bar\varphi(go) \varphi(go) \sqrt{\epsilon(go)} 
= \frac{\sqrt{k/2\pi}}{C} \langle\varphi\vert\varphi\rangle \ ,
\end{align*}
therefore $C=\sqrt{k/2\pi}$.

Equation \eqref{proportional} (with this normalization) can be rewritten as a statement about $a = \lvert\varphi\rangle\langle\varphi\rvert\in A_k$:
\begin{align*}
\unit^*(a)(o) = a(o,o) &= \varphi(o)\bar\varphi(o) \\
&=  \langle\psi_o\vert\varphi\rangle\langle\varphi\vert\psi_o\rangle \sqrt{\tfrac{k}{2\pi}\epsilon(o)} = I_k(a)(o) \sqrt{\tfrac{k}{2\pi}\epsilon(o)}  \ .
\end{align*}
Any $a\in A_k$ can be written as a linear combination of elements of this form, therefore this is true for any $a\in A_k$. By equivariance, this is true at any point of $S^2$, not just $o$.
\end{proof}

\subsubsection{Quantization of $\Hei(S^2,\epsilon)$}
\label{HS2}
Now, I shall examine the geometric quantization of $\Hei(S^2,\epsilon)$ using a symplectic groupoid. Although it is not really necessary for the quantization, it is illustrative to look at the structure of the groupoid $\Gamma$, which can be described quite explicitly here.

For convenience, let $G=\SU(2)$ and let $\T\subset G$ be the subgroup of diagonal matrices. The sphere $S^2$ is identified with the coset space $G/\T$, where $\T$ acts from the right. Let $X,Y,Z\in \su(2)$ be the standard generators with $[X,Y]=Z$, \emph{et cetera}.

The principal $\T$-bundle corresponding to the prequantization of $S^2$ is $G$ itself. The connection is defined by the left-invariant contact form, $\theta_0\in\Omega^1(G)$ such that, at the identity $e\in G$,  $\langle X,\theta_0\rangle = \langle Y,\theta_0\rangle = 0$ and $\langle Z,\theta_0\rangle = -\frac12$. 

The contact groupoid $\jac$ that integrates $(S^2,\epsilon)$ is the circle bundle associated to the prequantization of $\Sigma = \Pair S^2$. This is the quotient $\jac=\Pair(G)/\T$, where $\T$ acts by the right diagonal action on $\Pair G = G\times G$. The contact form $\theta\in\Omega^1(\jac)$ is such that its pullback to $\Pair G$ is $\t^*\theta_0-\s^*\theta_0$.

However, in order to exploit the symmetry here, it is useful to treat $\Pair(G)$ as the  action groupoid $G\ltimes G$, where $G$ acts on $G$ by right translations; that is, $g\in G$ transforms $g'\in G$ to $g' g^{-1}$, so the isomorphism $G\ltimes G\to \Pair(G)$ is given by $(g,g')\mapsto (g'g^{-1},g')$. 

This is just another way of identifying this groupoid with the manifold $G\times G$, but the advantage is that the contact form, $\theta$, is constant along the unit submanifold, $\{e\}\times G$. 
Along the unit submanifold, $\theta$ is normal to the unit submanifold and to the vectors $X$ and $Y$ on the first copy of $G$. 

Define $E \subset T_eG = \su(2)$ to be the subspace spanned by $X$ and $Y$. This makes $(G,\{e\},E)$ an explosive triple.

To construct $\Gamma$ by a double explosion, we need the subbundle of the normal bundle that is normal to $\theta$ (along the unit submanifold). That is the image of $E\times G$ under the quotient by $\T$.

With this identification, $\jac = (G\ltimes G)/\T$, where $u\in \T$ transforms $(g,g')\in G\ltimes G$ to $(u^{-1}gu,g'u) = (\Ad_{u^{-1}}g,g'u)$. 

For any $u\in\T$, the subspace $E\subset \su(2)$ is invariant under the adjoint action of $u^{-1}$, therefore $\Ad_{u^{-1}} : (G,\{e\},E)\to (G,\{e\},E)$ is a compatible map, and $u\in \T$ acts on the double explosion of $G$ by $\EE\Ad_{u^{-1}} : \EE(G,\{e\},E) \to \EE(G,\{e\},E)$. The symplectic groupoid integrating $\Hei(S^2,\epsilon)$ is the  double explosion,
\begin{align*}
\Gamma &= \EE(\jac,S^2,T^*S^2) = \EE(G\ltimes G,G,E\times G)/\T \\ &= (\EE(G,\{e\},E)\ltimes G)/\T \ .
\end{align*}

The groupoid $\EE(G,\{e\},E)$ is a family of simply connected Lie groups over $\R$. It is easiest to understand its structure by considering its Lie algebroid, which is a family of Lie algebras. The generators $X,Y,Z\in\su(2)$ are rescaled to $X'=\hbar X$, $Y'=\hbar Y$, and $Z'= \hbar^2 Z$. The Lie brackets of these are 
\begin{gather*}
[X',Y'] = \hbar^2 Z = Z'\ ,\\
[Y',Z'] = \hbar^3 X = \hbar^2 X'\ ,\\
\intertext{and}
[Z',X'] = \hbar^3 Y = \hbar^2 Y' \ .
\end{gather*}
From this, we can see that the group over any $\hbar\neq0$ is isomorphic to $G=\SU(2)$, but the group over $\hbar=0$ is the Heisenberg group.

The action of $\EE(G,\{e\},E)$ on $G$ is by right translations via the natural map $\Pr_G:\EE(G,\{e\},E) \to G$. In particular, at $\hbar=0$, this action is trivial.

So, as always, the subgroupoid of $\Gamma$ over any $\hbar\neq0$ is isomorphic to $\jac$. The subgroupoid $\Gamma\rvert_{\hbar=0}$ is a quotient of the product of the Heisenberg group (as a group) with $G$ (as a space). The result is a nontrivial bundle of groups over $S^2$, with fibers isomorphic to the Heisenberg group. The Heisenberg group is in particular a 3-dimensional vector space with a preferred direction. If we identify $S^2$ with the unit sphere in $\R^3$, then the copy of the Heisenberg group at a point of $S^2$ is the ambient  space and the preferred direction is the normal to $S^2$ there.

Because the polarization, $\P=\F\boxplus\bar\F$, of $\Sigma=\Pair S^2$ is totally complex, the polarization $\Q$ of $\Gamma$ becomes completely vertical at $\hbar=0$. The restriction $\iota^*\Q$ is real, and its leaves are the fibers of $\Gamma\rvert_{\hbar=0}$ over $S^2$. This means that the leaf space of $\iota^*\Q$ is $S^2$ and $\cs_{\iota^*\Q}(\Gamma\rvert_{\hbar=0},\iota^*\EA_\jac\theta) = \C(S^2)$.

Everything described in Section \ref{Multiply} applies here. The Bohr-Sommerfeld subvariety is the subgroupoid of $\Gamma$ where $\hbar=0$ or $\hbar^{-1}\in\Z$. In addition to the Bohr-Sommerfeld quantization conditions, the polarization $\Q$ is not positive where $\hbar<0$ and 
there do not exist any holomorphic sections over those components of $\Gamma_\BS$, so these should be discarded as well. With this in mind, define $\Gamma_{\BS,\geq0}$ as the subgroupoid of points where $\hbar=0$ or $\hbar^{-1}\in\N$.

Consider a polarized section $a\in \Gamma(\Gamma_{\BS,\geq0},\Omega_\Q^{1/2})$. The restriction of $a$ to $\hbar=k^{-1}$ is naturally identified with a polarized section $\ev_{k^{-1}}(a)$ of $(L^k\boxtimes L^{-k})\otimes \Omega_\P^{1/2}$ over $\Sigma$, i.e., $\ev_{k^{-1}}(a)\in A_k$. The definitions of the convolution products over $\Sigma$ and $\Gamma$ fit together, such that $\ev_{k^{-1}}$ is a homomorphism.

In this case, $m = \rk \P_0-\rk \D_0 = 1$. The restriction of $\omega_\Gamma$ to $T^\t\Gamma$ is $-\hbar^{-1}\epsilon$, so $\varepsilon = \epsilon/2\pi\hbar$, and
\begin{align*}
\ev_0(a) &= \lim_{\hbar\to0^+} \frac{\unit^*[\ev_\hbar (a)]}{\sqrt{\epsilon/2\pi\hbar}} 
= \lim_{k\to\infty} \frac{\unit^*[\ev_{k^{-1}}(a)]}{\sqrt{k\epsilon/2\pi}}
= \lim_{k\to\infty} I_k[\ev_{k^{-1}}(a)] \ .
\end{align*}
The quantization using $\Gamma$ gives a way of fitting the algebras $A_k$ and $\C(S^2)$ together; this shows that this agrees perfectly with the correspondence given by the coherent states maps $I_k$.

\subsection{Connes' tangent groupoid and Landsman's generalization}
The relationship between pseudodifferential operators on a manifold, $N$, and their symbols is an example of quantization. Connes \cite{Con94} has studied the analytic index (of a symbol) by essentially constructing a strict deformation quantization from $\C_0(T^*N)$ (the algebra of symbols) to $\K[L^2(N)]$ (the algebra of operators). To be precise, he constructs a continuous field of \cs-algebras over the interval $[0,1]$ such that the algebra over $0$ is $\C_0(T^*N)$ and over any other point is $\K[L^2(N)]$.

He constructs this geometrically from what he calls the ``tangent groupoid''  of $N$ (not to be confused with the tangent bundle to a groupoid, which is also a groupoid). Algebraically, this is a union of $TN\times\{0\}$ and $\Pair(N)\times(0,1]$. As a manifold, it is the part of the simple explosion $\EE(\Pair N,N)$ where $\hbar\in[0,1]$. (The restriction to $[0,1]$ is not important.)

In that case, the manifold being quantized is $T^*N$, which is of course the dual vector bundle to $TN$, which is the  Lie algebroid  integrated by $\Pair(N)$. This generalizes to any integrable Lie algebroid $A$. The (total space of the) dual vector bundle $A^*$ is a Poisson manifold. If $\G$ is a Lie groupoid integrating $A$, then $\cs(\G)$ is a quantization of $A^*$. 

Landsman \cite{lan2} has constructed a strict deformation quantization of $A^*$ by generalizing Connes construction to a continuous field of \cs-algebras over $\R$. The algebra over $0$ is $\C_0(A^*)$ and over any other point is $\cs(\G)$. He generalizes Connes' tangent groupoid to what he calls the ``normal groupoid''. Algebraically, this is a union of $A\times\{0\}$ (treated as a vector bundle and hence a groupoid) with $\G\times\R_*$. As a manifold, it is the simple explosion, $\EE(\G,M)$, of $\G$ along the unit submanifold.

(Van Erp's parabolic tangent groupoid is different generalization of Connes' tangent groupoid and is discussed in Appendix~\ref{Parabolic}.)

\subsubsection{Quantization of $A^*$}
Landsman's construction can be recovered from what I have presented here.  Let $A$ be a Lie algebroid over $N$ and $\G$ a groupoid integrating $A$. The cotangent bundle $T^*\G$, with the standard symplectic form, is a symplectic groupoid integrating the Poisson manifold $A^*$.

Let $\theta_0\in\Omega^1(T^*\G)$ be the Liouville form. This is a multiplicative  symplectic potential, so the symplectic groupoid $(T^*\G,-d\theta_0)$ is prequantized by the trivial line bundle, $\co\times T^*\G$, with connection $d+i\theta_0$ and the trivial cocycle. 

Let $p : T^*\G\to \G$ be the bundle projection. This is a fibration of groupoids, so  $\P:=\ker Tp$ is a groupoid polarization; it is the foliation of $T^*\G$ whose leaves are the fibers of the cotangent bundle. Since the leaves are Lagrangian, $\P$ is a symplectic groupoid polarization.
The Liouville form is adapted (normal) to the polarization, $\P$, so the twisted, polarized convolution algebra of $T^*\G$ is just the convolution algebra of $\G$:
\[
\cs_\P(T^*\G,\theta_0) = \cs(\G)\ .
\]

\subsubsection{Quantization of $\Hei A^*$}
Let $\eta\in\X^1(A^*)$ be the Euler vector field.
 The Poisson structure $\Pi\in\X^2(A^*)$ is homogeneous in the sense that, $\Lie_\eta\Pi=-\Pi$. This means that $\Pi$ is itself the Poisson coboundary, $\Pi = -[\eta,\Pi]$, of $\eta$.
 
In general, the Jacobi algebroid of a Poisson manifold is an extension of the cotangent Lie algebroid determined by $\Pi$ as a Poisson cocycle, but because $\Pi$ is exact in this case, the Jacobi algebroid is isomorphic to the trivial extension; an isomorphism $r:T^*_\Pi A^*\to T^* A^*\oplus\R$ is defined by $r(\xi,f) = (\xi,f+\eta\inner\xi)$ for $\xi\in T^*A^*$ and $f\in\R$.
 This implies that the Jacobi algebroid is integrated by the Cartesian product $\jac := T^*\G\times\R$ with the group $\R$. If $z$ denotes the coordinate on the group $\R$, then the contact form on $\jac$ is $\theta=\theta_0+dz$. (This is a trivial special case of the structure given by Theorem~\ref{R fibers}.)

The projection $\pro:\jac\to T^*\G$ is just the Cartesian projection. The composition $p\circ\pro:\jac\to \G$ is also a fibration of groupoids. It maps the unit submanifold, $A^*$, to the unit submanifold, $N$, therefore it is a compatible map from the explosive triple $(\jac,A^*,T^*A^*)$ to $(\G,N,A)$. Exploding this gives another fibration of groupoids,
\[
\EE(p\circ\pro) : \Gamma \to E(\G,N) \ .
\]

Over $\hbar\neq0$, the kernel foliation is $\ker T(p\circ\pro)\times 0 = (T\pro)^{-1}\P\times 0$. This shows that $\Q=\ker T\EE(p\circ\pro)$ is the polarization of $\G$ constructed from $\P$.

The symplectic potential $\EA_\jac \theta$ is multiplicative but is not adapted to the polarization $\Q$. However, as in eq.~\eqref{corrected connection} above we can add an exact form to get the symplectic potential $\EA_\jac \theta-d(\hbar z) = z\,d\hbar + \EA_\jac \theta_0$, which is adapted. The function $\hbar z$ is multiplicative, therefore this adapted symplectic potential is multiplicative. This implies that the twisted, polarized convolution algebra of $\Gamma = \EE(\jac,A^*,T^*A^*)$ is just the convolution algebra of Landsman's normal groupoid, $\EE(\G,N)$:
\[
\cs_\Q(\Gamma,z\,d\hbar + \EA_\jac \theta_0) = \cs[\EE(\G,N)]\ .
\]

In this way, my construction with the  polarization $\P = \ker Tp$ reproduces Landsman's construction, which for $\G=\Pair(N)$ reproduces Connes' construction.

\section{Other Deformations}
\label{Other}
The Heisenberg-Poisson manifold $\Hei(M,\Pi)$  is $M\times \R$ with the Poisson structure $\hbar \Pi$. We might more generally consider some $\hbar$-dependent Poisson structure, $\pi(\hbar)$, that does not necessarily vary linearly. This is equivalent to considering a Poisson structure on $M\times\R$ for which $\hbar$ is a Casimir function.

This may plausibly be quantized to construct a continuous field of \cs-algebras. The algebra at $\hbar$ would be the quantization of $M$ with the Poisson structure $\pi(\hbar)$ and the algebra of continuous sections would be the quantization of $(M\times\R,\pi)$. 

To construct a strict deformation quantization of $M$, we want the algebra at $\hbar=0$ to be $\C_0(M)$, therefore we should demand that $\pi(0)=0$. What is the role of the original Poisson structure, $\Pi$, on $M$?

Suppose that quantizing $(M,\pi(\hbar))$  gives a \cs-algebra $A_\hbar$ and a linear map $Q_\hbar : \C^\infty_0(M)\to A_\hbar$ that satisfies the Dirac quantization rule,
\[
[Q_\hbar(f),Q_\hbar(g)] \approx i Q_\hbar(\{f,g\}_{\pi(\hbar)}) 
\]
for all $f,g\in \C^\infty_0(M)$. (I am being deliberately vague about the approximation, but it should converge rapidly in the limit of vanishing Poisson structure.)

If we want to construct a strict deformation quantization of $(M,\Pi)$, then we need the Dirac condition,
\[
[Q_\hbar(f),Q_\hbar(g)] \approx i\hbar(\{f,g\}_\Pi)\ .
\]
Comparing these shows that we need $\pi(\hbar)\approx \hbar\Pi$ for $\hbar\approx0$. In other words $\pi(0)=0$ and $\pi'(0)=\Pi$; the given Poisson structure should be the derivative of the Poisson structure used in constructing a  quantization.

\subsection{Integrability}
Integrability is a very restrictive condition on Poisson manifolds of this type. Consider the case that both $\Pi$ and  $\pi(\hbar)$ for $\hbar\neq0$ are invertible (hence symplectic). 

\begin{thm}
\label{Nonintegrable}
Let $(M,\Pi)$ be a simply connected Poisson manifold, and consider a Poisson structure on $M\times\R$ given by a variable Poisson structure $\pi(\hbar)\in\X^2(M)$ such that $\pi(0)=0$ and $\pi'(0)=\Pi$. Moreover, suppose $\Pi$ and $\pi(\hbar)$ are the inverses of symplectic forms $\Omega$ and $\omega(\hbar)$ for $\hbar\neq0$. If this Poisson structure on $M\times\R$ is integrable, then the cohomology class of $\omega(\hbar)$ takes the form
\[
[\omega(\hbar)] = \rho + \frac{[\Omega]}{F(\hbar)}\ ,
\]
where $\rho\in H^2(M;\R)$ and $F(\hbar)$ is a smooth function with $F(0)=0$, $F'(0)=1$, and $F'(\hbar)>0$ for all $\hbar\in\R$.
\end{thm}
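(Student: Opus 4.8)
The plan is to combine the Crainic--Fernandes integrability criterion \cite{c-f1,c-f2} with an explicit computation of the monodromy groups of the cotangent Lie algebroid $A:=T^*(M\times\R)$ of $(M\times\R,\pi)$, and then to translate the resulting discreteness constraint into the asserted statement about the curve $\hbar\mapsto[\omega(\hbar)]$ in $H^2(M;\R)$. The first step is a local normal form near $\hbar=0$: since $\pi(0)=0$, Hadamard's lemma gives $\pi(\hbar)=\hbar\,B(\hbar)$ with $B$ a smooth family of bivectors and $B(0)=\pi'(0)=\Pi$; as $\Pi$ is invertible, $C(\hbar):=B(\hbar)^{-1}$ is smooth near $0$ with $C(0)=\Omega$, so $\omega(\hbar)=\hbar^{-1}C(\hbar)$ and $u(\hbar):=\hbar\,[\omega(\hbar)]$ extends smoothly across $\hbar=0$ with $u(0)=[\Omega]$. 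This regularity is what drives the argument.

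Next, the monodromy. Because $\hbar$ is a Casimir, the restriction of $A$ to each slice $M\times\{\hbar\}$ is a full Lie subalgebroid, so $\mon_{(x,\hbar)}(A)$ is computed inside it; at $\hbar=0$ the leaves are points and $\mon_{(x,0)}(A)=0$. For $\hbar\neq0$ the slice is the symplectic leaf $(M,\omega(\hbar))$, whose isotropy at $(x,\hbar)$ is the one-dimensional conormal line $\R\,d\hbar$, and the Crainic--Fernandes variation map $\pi_2(M)\to\R\,d\hbar$ sends $\sigma$ to the $\hbar$-derivative of its symplectic area, $\bigl(\tfrac{d}{d\hbar}\langle[\omega(\hbar)],\sigma\rangle\bigr)\,d\hbar$ --- the same mechanism underlying the monodromy formula in the proof that $\Hei(M,\Pi)$ is integrable iff $(M,\Pi)$ integrates to a contact groupoid. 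Since $M$ is simply connected, Hurewicz gives $\pi_2(M)\onto H_2(M;\Z)$, which mod torsion is a full lattice $\Lambda\subset H_2(M;\R)$. Writing $v(\hbar):=\hbar u'(\hbar)-u(\hbar)$, so that $\tfrac{d}{d\hbar}[\omega(\hbar)]=\hbar^{-2}v(\hbar)$, one obtains
\[
\mon_{(x,\hbar)}(A)=\hbar^{-2}\,\langle v(\hbar),\Lambda\rangle\cdot d\hbar\subset\R\,d\hbar\qquad(\hbar\neq0).
\]

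By \cite{c-f2}, $\pi$ is integrable iff these groups are locally uniformly discrete. The image of a full lattice under a linear functional is discrete precisely when that functional is a real multiple of a primitive integral one, and local uniformity further forbids the functional from vanishing at an isolated value of $\hbar$ or from jumping between integral directions. Hence on each of $(0,\infty)$ and $(-\infty,0)$ either $v\equiv0$ or $v$ is nowhere zero of a fixed rational direction; and since $v(\hbar)\to-u(0)=-[\Omega]$ as $\hbar\to0$ (nonzero when, say, $M$ is compact; the degenerate case $[\Omega]=0$ forces $[\omega(\hbar)]$ constant and is handled directly), that direction is the direction of $[\Omega]$ on both sides. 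Thus $[\Omega]$ is a real multiple of a rational class, $v(\hbar)=\lambda(\hbar)[\Omega]$ with $\lambda$ smooth and nowhere zero on $\R\setminus\{0\}$, and $\lambda(\hbar)\to-1$ forces $\lambda<0$ throughout.

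To conclude, note that $\tfrac{d}{d\hbar}[\omega(\hbar)]=\hbar^{-2}\lambda(\hbar)[\Omega]$ keeps $[\omega(\hbar)]$ on a line with direction $[\Omega]$ on each side of $0$; the near-$0$ expansion $[\omega(\hbar)]=\hbar^{-1}[\Omega]+\tilde u(\hbar)$ (Hadamard: $u(\hbar)=[\Omega]+\hbar\tilde u(\hbar)$) shows both lines pass through $\tilde u(0)$, so they coincide and one may write $[\omega(\hbar)]=\rho_0+g(\hbar)[\Omega]$ with $g$ smooth on $\R\setminus\{0\}$, $g'=\hbar^{-2}\lambda<0$, and $g(\hbar)=\hbar^{-1}+h(\hbar)$ near $0$ with $h$ smooth (since $\hbar g(\hbar)$ is smooth and equals $1$ at $0$). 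After absorbing $h(0)$ into $\rho_0$, set $F(\hbar):=1/(g(\hbar)-t)$ (and $F(0):=0$) with $t$ chosen so that $g-t$ is nowhere zero on $\R\setminus\{0\}$; then $[\omega(\hbar)]=(\rho_0+t[\Omega])+[\Omega]/F(\hbar)$, the expansion $g(\hbar)=\hbar^{-1}(1+O(\hbar))$ makes $F(\hbar)=\hbar(1+O(\hbar))$ smooth at $0$ with $F(0)=0$, $F'(0)=1$, and $F'=-g'/(g-t)^2>0$ wherever defined; a valid $t$ exists because $g$ is strictly decreasing on each of $(0,\infty),(-\infty,0)$ with $g\to+\infty$ at $0^+$ and $g\to-\infty$ at $0^-$, and the near-$0$ signs let $t=0$ (hence $\rho=\rho_0$) work. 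I expect the main obstacles to be (i) the monodromy identification --- justifying that the Crainic--Fernandes variation for the slice $M\times\{\hbar\}$ is exactly the $\hbar$-derivative of symplectic area, so that $\mon_{(x,\hbar)}(A)=\hbar^{-2}\langle v(\hbar),\Lambda\rangle\,d\hbar$ --- and (ii) extracting the smooth increasing $F$ from the bare ``locally uniformly discrete'' hypothesis, in particular the behaviour as $\hbar\to0$, the $[\Omega]=0$ and non-compact edge cases, and the global choice of the additive constant.
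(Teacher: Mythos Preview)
Your proposal follows essentially the same route as the paper: both invoke the Crainic--Fernandes criterion, identify the monodromy at $(x,\hbar)$ with the $\hbar$-derivative of symplectic area over $2$-spheres, use discreteness together with continuity in $\hbar$ to force $[\omega'(\hbar)]$ to point in a fixed direction, pin that direction to $[\Omega]$ via the $\hbar^{-1}\Omega$ asymptotics, integrate, and finally read off $F'>0$ from local uniform discreteness. Your treatment is in fact more careful than the paper's in extracting $F$ (the paper simply says ``integrating shows that $[\omega(\hbar)]=\rho+[\Omega]/F(\hbar)$'' and leaves the smoothness at $0$ and global nonvanishing implicit), and you correctly flag the two steps where the real work lies.
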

\begin{proof}
Crainic and Fernandes \cite{c-f2} showed that a Poisson manifold is integrable if and only if its monodromy groups are locally uniformly discrete.

The monodromy groups of a Poisson manifold are subgroups of the conormal spaces to the symplectic foliation. More specifically, the conormal space at any point is a Lie algebra and the monodromy group lies in its center.

In this case, the conormal space is the whole cotangent fiber at points where $\hbar=0$ and is spanned by $d\hbar$ at points where $\hbar=0$. However, at points where $\hbar=0$, the conormal fiber is isomorphic to a Heisenberg Lie algebra, with center spanned by $d\hbar$, therefore at \emph{any} point of $M\times\R$, the monodromy group lies in the 1-dimensional space spanned by $d\hbar$ there.

At the regular points (those where $\hbar\neq0$) the monodromies are easy to compute \cite{c-f2} as variations of symplectic area. 
If $j: S^2\to M$, then the symplectic area
\[
\int_{S^2} j^*[\omega(\hbar)]
\]
only depends upon the homotopy class of $j$, and the monodromy of $j$ is
\[
\frac{d}{d\hbar} \int_{S^2} j^*[\omega(\hbar)]
\]
times $d\hbar$. If $(M\times \R,\pi)$ is integrable, then the monodromy groups must be discrete, therefore the ratios between these monodromies must be rational. The ratios are also continuous functions of $\hbar$, therefore they are constant.

Because $M$ is  simply connected,  the Hurewicz homomorphism, 
\[
\pi_2(M)\to H_2(M;\Z) \ ,
\] 
is surjective, so the monodromies completely determine the cohomology class  $[\omega'(\hbar)]$, therefore this is the product of some function of $\hbar$ with a constant cohomology class. 

For $\hbar\approx 0$, $\omega(\hbar)\approx \Omega\, \hbar^{-1}$, thus $\omega'(\hbar)\approx - \Omega\, \hbar^{-2}$. The cohomology class  $[\omega'(\hbar)]$ is thus proportional to $[\Omega]$. Integrating shows that 
\[
[\omega(\hbar)] = \rho + \frac{[\Omega]}{F(\hbar)} \ ,
\]
where  $\rho\in H^2(M)$, and $F$ is a real function of $\hbar$ with $F'(0)=1$. 

Moreover, if  $F'$ vanished at some value of $\hbar$, then the monodromies would collapse to $0$ and not be locally \emph{uniformly} discrete, therefore $F'>0$ and $F$ is a monotone increasing function.
\end{proof}

More generally, integrability becomes less restrictive the more degenerate the Poisson structure $\pi(\hbar)$ is. That gives more room for the monodromy groups to be discrete.

\subsection{Irrational symplectic manifolds}
\label{Irrational}
Suppose that $(M,\Omega)$ is a simply connected symplectic manifold with nonintegral symplectic form in the sense that $[\tfrac\Omega{2\pi}]\in H^2(M;\R)$ is not integral. This manifold is not prequantizable, so geometric quantization does not provide a corresponding noncommutative algebra. Indeed, there does not seem to be any algebra that plays the role of the natural quantization of $(M,\Omega)$.

This does not necessarily mean that there does not exist a strict deformation quantization. For instance, if some nonzero real multiple of $\Omega$ is integral, then there is really no problem. We can still quantize $\Hei(M,\Pi)$, because there are infinitely many values of $\hbar$ where the symplectic submanifold of $\Hei(M,\Pi)$ is prequantizable.

On the other hand, if $\Omega$ is \emph{not} a multiple of an integral $2$-form, then we need to take a more general approach, because there is no good quantization of $\Hei(M,\Pi)$. In this case, we should try varying the Poisson structure nonlinearly, as in the previous section.
\begin{remark}
I will now be using Cartesian product notation to present a Poisson or symplectic structure on $S^2\times S^2$.
\end{remark}

The simplest example in which this works is symplectic. As in Section \ref{Sphere}, let $\epsilon\in\Omega^2(S^2)$ be half the volume form of the unit sphere. Let $\epsilon^{-1}$ denote the corresponding Poisson bivector. 

Consider $(S^2\times S^2,\Omega)$ with the irrational symplectic form $\Omega = \epsilon\times\frac{-1+\sqrt5}2\epsilon$, or equivalently, the Poisson structure $\Pi = \epsilon^{-1}\times\frac{1+\sqrt5}{2}\epsilon^{-1}$. The Heisenberg-Poisson manifold is unquantizable, because as the symplectic structure is rescaled the two components are never simultaneously integral.  Instead, we can consider $S^2\times S^2\times\R$ with the varying Poisson structure
\beq
\label{Fibonnacci Poisson}
\pi(\hbar) = \hbar\epsilon^{-1}\times\tfrac{2\hbar}{-1+\sqrt{5+4\hbar^2}}\epsilon^{-1} 
\eeq
whose inverse is 
\[
\omega(\hbar) =\hbar^{-1}\epsilon\times\tfrac{-1+\sqrt{5+4\hbar^2}}{2\hbar}\epsilon \ .
\]
With this choice, $\pi(0)$ and $\pi'(0)$ have the desired values. 
This is a strange looking choice, but unlike the Heisenberg-Poisson manifold, it is quantizable. The motivation for this  choice is discussed in \cite{h-h}.

Let $F_n$ denote the $n$'th Fibonacci number: $F_0=0$, $F_1=1$, $F_2=1$, \emph{et cetera}. 
Setting $\hbar=\pm F_{2k}^{-1}$ for $k=1,2,3\dots$, gives,
\[
\omega(\pm F_{2k}^{-1}) = (\pm F_{2k}\epsilon)\times(\pm F_{2k-1}\epsilon)
\]
which is integral! These are the only values of $\hbar$ for which this symplectic form is integral.

It is fairly clear how to quantize $(S^2\times S^2\times \R,\pi)$. The \cs-algebra is the algebra of  sections of a continuous field of \cs-algebras over 
\[
I = \{0,F_{2k}^{-1} \mid k\in\N\}\subset\R\ .
\]
The algebra over $\hbar=0$ is $\C(S^2\times S^2)$. The algebra over $\hbar=F_{2k}^{-1}$ is a matrix algebra, equal to the tensor product of the quantizations of $(S^2,F_{2k}\epsilon)$ and $(S^2,F_{2k-1}\epsilon)$; this is the algebra of square matrices of size $F_{2k}F_{2k-1}$.

Although this Poisson manifold has an obvious quantization, by Theorem~\ref{Nonintegrable} it is not integrable. The quantization cannot be constructed from a symplectic groupoid, because there is no symplectic groupoid. To construct this quantization geometrically, we need to generalize the construction somehow.

There are many nonintegrable Poisson manifolds that should be quantizable. Stacky symplectic groupoids \cite{zhu1} are a general structure for dealing with nonintegrable Poisson manifolds, but  for this example a more elementary approach is sufficient.

Let $(\Gamma,\omega_\Gamma)$ be the symplectic groupoid integrating $\Hei(S^2,\epsilon)=(S^2\times\R,\hbar\,\epsilon^{-1})$ that was constructed in Section~\ref{HS2}, and consider the product 
\[
(\Gamma\times\Gamma,\omega_\Gamma\times\omega_\Gamma)\ .
\]
This is a symplectic groupoid over  $\Hei S^2\times\Hei S^2 = S^2\times\R\times S^2\times\R$. There are now two ``$\hbar$''s, so I will denote that coordinate on the second factor as $\lambda$. Instead, let $\G\subset\Gamma\times\Gamma$ be the submanifold  where 
\[
\lambda = \frac{2\hbar}{-1+\sqrt{5+4\hbar^2}} \ ;
\]
see Figure~\ref{Curve}.
This is a coisotropic subgroupoid over $S^2\times S^2\times\R$. The pullback of the symplectic form to $\G$ is automatically closed and multiplicative, but it is degenerate, so it makes $\G$  a \emph{presymplectic} groupoid.
\begin{figure}
\begin{center}
\includegraphics{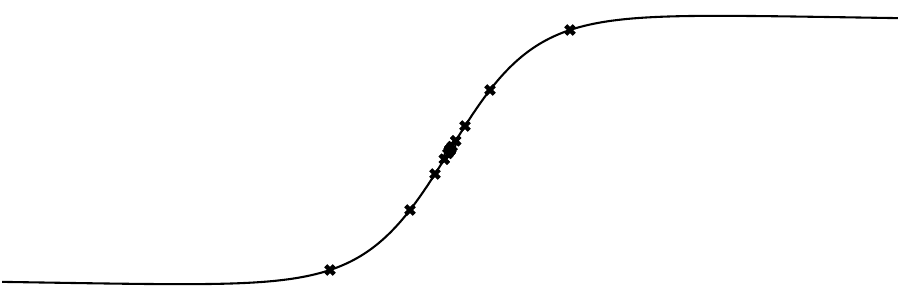}
\end{center}
\caption{The image of $\G$ in the $\hbar$-$\lambda$-plane. The marked points are where the Bohr-Sommerfeld conditions are satisfied.\label{Curve}}
\end{figure}

In principle, a symplectic groupoid over $S^2\times S^2\times\R$ should be obtained by symplectic reduction of $\G$, but no such symplectic groupoid exists, so what goes wrong? 

As we have seen, except at $\hbar=0$, the orbits of $\#d\hbar$ in $\Gamma$ are circles. In $\Gamma\times\Gamma$, these become tori. The degenerate direction of the presymplectic form on $\G$ is tangent to these tori. At most points, this gives a Kronecker foliation. The symplectic reduction of $\G$ would be the leaf space of the (1-dimensional) foliation by the degenerate direction of the presymplectic form, but that is not a smooth manifold in this case.

As in Section~\ref{HS2}, let $\Q$ be the polarization of $\Gamma$ derived from the complex structure on $S^2$. This gives an obvious polarization $\Q\times \Q$ on the larger symplectic groupoid. This is tangent to the submanifolds of constant $\hbar$ and $\lambda$, therefore it is tangent to $\G$. Define $\Rp$ as the restriction of $\Q\times\Q$ to $\G$.

This inherits most of the properties of $\Q\times\Q$. It is involutive, multiplicative, and Hermitian, therefore it is a groupoid  polarization of $\G$. Moreover, $\Rp$ includes the degenerate direction of the presymplectic form.

This $\Rp$ is the restriction of a Lagrangian distribution, so it is coisotropic. The rank of $\Rp$ is $6$ and the dimension of $\G$ is $11$, so $\Rp$ is maximally coisotropic. \emph{Maximal coisotropic} seems like the most natural generalization of \emph{Lagrangian} to presymplectic manifolds, so it is reasonable to consider $\Rp$ to be a polarization of $\G$ as a presymplectic groupoid.

The prequantization of $\G$ is just the restriction of the prequantization of $\Gamma \times \Gamma$. Except at $\hbar=0$, the real part of the polarization $\Rp$ is a foliation of $\G$ by 2-tori. This means that there are 2 independent Bohr-Sommerfeld conditions coming from holonomies around these tori. 

These are just the Bohr-Sommerfeld conditions for each copy of $\Gamma$, i.e., $\hbar^{-1},\lambda^{-1}\in\Z$. Along $\G$, this means that either $\hbar=\lambda=0$ or $\hbar=\pm F_{2k}^{-1}$ and $\lambda=\pm F_{2k-1}^{-1}$ for some $k\in\N$; see Figure~\ref{Curve}.

The polarization, $\Rp$, fails to be positive when $\hbar,\lambda<0$. Applying the Bohr-Sommerfeld conditions and requiring positivity leaves $\G_{\BS,\geq0}$, which is the subgroupoid of $\Gamma\times\Gamma$ where either $\hbar=\lambda=0$ or $\hbar= F_{2k}^{-1}$ and $\lambda= F_{2k-1}^{-1}$ for some $k\in\N$. 

Let $\G_k$ be the $k$'th component of $\G_{\BS,\geq0}$. This is a $\T^2$-bundle over $\Pair(S^2)^2$. Polarized functions over $\G_k$ are equivalent to polarized sections of a nontrivial line bundle over $\Pair(S^2)^2$. In this way, geometric quantization produces the correct matrix algebra from $\G_k$.

The result is that geometric quantization with this presymplectic groupoid produces the expected quantization of $(S^2\times S^2\times\R,\pi)$, even though no symplectic groupoid exists in this situation.

This approach may allow the geometric quantization of many other nonintegrable Poisson manifolds. The Lie algebroid of this contact groupoid is a central extension of the cotangent Lie algebroid. Centrally extending enlarges the vector spaces in which the monodromy groups live. This makes it easier for the monodromy groups to be discrete, and thus for the Lie algebroid to be integrable.

\begin{appendix}
\section{Exploding Contact Groupoids}
\label{Exploding contact}
For the constructions in this paper, I have only used \emph{strict} contact groupoids, for which the contact form is multiplicative. On a general contact groupoid (Def.~\ref{Contact_groupoid}) the contact form is only multiplicative in a twisted sense, and the base manifold is not a Poisson manifold but a Jacobi manifold. I shall briefly show how to generalize Theorem \ref{Integration thm}, because it provides a class of examples of symplectic groupoids.

In order to do this, I will need one more general result about explosions, showing how the $\hbar$ coordinate on an explosion of $M$ can be rescaled by a function on $M$.
\begin{lem}
\label{Rescale}
Given an explosive triple $(M,N,E)$, for any smooth real function $r\in\Ci(M)$, there exists a unique smooth map $\Res_r:\EE(M,N,E)\to \EE(M,N,E)$ such that
\begin{enumerate}
\item \label{Rescale1}
$\Pr_M\circ \Res_r= \Pr_M$.
\item \label{Rescale2}
$\Res_r^*(\hbar) =\Pr_M^*(e^{-r})\cdot \hbar\in\Ci[\EE(M,N,E)]$.
\item\label{Rescale3}
For a compatible map $\phi : (M,N,E)\to (M',N',E')$ and a smooth real function $r\in\Ci(M')$,
\begin{equation}
\label{rescaling}
\begin{tikzcd}
\EE(M,N,E) \arrow{d}{\Res_{\phi^*r}} \arrow{r}{\EE\phi} & \EE(M',N',E') \arrow{d}{\Res_r} \\
\EE(M,N,E) \arrow{r}{\EE\phi} & \EE(M',N',E')
\end{tikzcd}
\end{equation}
is a commutative diagram.
\item \label{Rescale4}
For any two smooth real functions $r$ and $s$, $\Res_r\circ\Res_s = \Res_{r+s}$.
\end{enumerate}
\end{lem}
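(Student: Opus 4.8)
The plan is to reduce everything to explosive charts, where the formula for $\EE\phi$ is explicit via equation~\eqref{exploded map}, and then glue. First I would handle the case of an explosive chart $(M,N,E)$ with coordinates $(x,y,z)$. On $\EE(M,N,E)\subset\R^{n+1}$, properties~\eqref{Rescale1} and~\eqref{Rescale2} force, for a point $(x,y,z,\hbar)$ with $\Pr_M$-image $(x,\hbar y,\hbar^2 z)\in M$, that $\Res_r(x,y,z,\hbar)=(x',y',z',\hbar')$ with $\hbar'=e^{-r(x,\hbar y,\hbar^2 z)}\hbar$ and $(x',\hbar'y',(\hbar')^2z')=(x,\hbar y,\hbar^2 z)$, hence $x'=x$, $y'=e^{r(x,\hbar y,\hbar^2z)}y$, $z'=e^{2r(x,\hbar y,\hbar^2 z)}z$. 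This is manifestly smooth in $(x,y,z,\hbar)$ (it is a composition of $\Pr_M$, $r$, exponentials, and multiplications), and one checks it lands back in $\EE(M,N,E)$ since its $\Pr_M$-image is again $(x,\hbar y,\hbar^2 z)\in M$. This simultaneously proves existence and, by the forcing argument, uniqueness on charts; property~\eqref{Rescale4} is then a one-line computation from the chain rule for exponentials (both sides send $(x,y,z,\hbar)$ to the point with $y$-component $e^{(r+s)(x,\hbar y,\hbar^2 z)}y$, etc., using that $\Res_s$ preserves $\Pr_M$ so the argument of $r$ is unchanged), and property~\eqref{Rescale3} for an inclusion of an open subchart is immediate from the formula.

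Next I would prove naturality~\eqref{Rescale3} for a general compatible map $\phi:(M,N,E)\to(M',N',E')$ of explosive charts. Both composites in~\eqref{rescaling} preserve $\Pr$ (using naturality of $\Pr$, Lemma/Theorem~\ref{Explosion}, together with property~\eqref{Rescale1}) and act on $\hbar$ by multiplication: $\Res_r\circ\EE\phi$ multiplies $\hbar$ by $\Pr_{M'}^*(e^{-r})\circ\EE\phi=\EE\phi^*\Pr_{M'}^*(e^{-r})=\Pr_M^*\phi^*(e^{-r})=\Pr_M^*(e^{-\phi^*r})$, which is exactly what $\EE\phi\circ\Res_{\phi^*r}$ does. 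Since a point of $\EE(M',N',E')$ in the image is determined by its $\Pr_{M'}$-value and $\hbar$-value (away from $\hbar=0$ by Theorem~\ref{Explosion}(\ref{Projection}), and everywhere by continuity/density), the two composites agree. This is the step I expect to be the only mild subtlety: one must make sure the ``determined by $\Pr$ and $\hbar$'' argument is legitimate at $\hbar=0$, which follows because $\hbar\neq0$ is dense and both composites are continuous (indeed smooth).

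Finally I would pass to the whole category of explosive triples by the same colimit argument used in the proof of Theorem~\ref{Explosion}. Given $(M,N,E)$, choose an atlas of explosive charts; the maps $\Res_r$ on the charts are compatible with the inclusions $\EE(U_\alpha\cap U_\beta,\dots)\hookrightarrow\EE(U_\alpha,\dots)$ by the naturality~\eqref{Rescale3} already established for inclusions of subcharts (applied with $r$ restricted), so they glue to a well-defined smooth map $\Res_r:\EE(M,N,E)\to\EE(M,N,E)$. Uniqueness, properties~\eqref{Rescale1}, \eqref{Rescale2}, \eqref{Rescale4} are local and hence inherited from the chart case; naturality~\eqref{Rescale3} for a general compatible $\phi$ follows because it can be checked locally, where $\phi$ is expressed through charts and the chart-level naturality applies. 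The only genuine work is the explicit chart formula and the forcing argument for uniqueness; everything else is bookkeeping with colimits exactly as in Theorem~\ref{Explosion}.
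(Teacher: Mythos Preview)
Your proposal is correct and follows essentially the same approach as the paper: derive the explicit chart formula $\Res_r(x,y,z,\hbar)=(x,e^{r\circ\Pr_M}y,e^{2r\circ\Pr_M}z,e^{-r\circ\Pr_M}\hbar)$ from conditions~(1) and~(2), use this to establish~(3) on charts, glue via the atlas/colimit argument of Theorem~\ref{Explosion}, and finally verify~(4). The only stylistic difference is that for~(3) the paper simply says ``direct computation from the explicit formula'' (justified by the uniqueness from~(1) and~(2)), whereas you spell this uniqueness out as a density argument via Theorem~\ref{Explosion}(\ref{Projection}) on $\hbar\neq0$; these are the same idea, and your version is a perfectly clean way to phrase it.
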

\begin{proof}
Once again, I first consider the case that $(M,N,E)$ is an explosive chart. In terms of the coordinates on $\EE(M,N,E)$, 
\[
\Pr_M (x,y,z,\hbar) = (x,\hbar y,\hbar^2 z)
\]
and $\hbar$ is just the coordinate of that name. Conditions \ref{Rescale1} and \ref{Rescale2} are satisfied if and only if
\[
\Res_r(x,y,z,\hbar) = (x,e^{r(x,\hbar y,\hbar^2 z)}y,e^{2r(x,\hbar y,\hbar^2 z)}z,e^{-r(x,\hbar y,\hbar^2 z)}\hbar)\ .
\]
This is explicitly a smooth map.

Condition \ref{Rescale3} follows by a direct computation from this explicit formula. This works because conditions \ref{Rescale1} and \ref{Rescale2} determine $\Res_r$ uniquely.

The unique extension of this construction to any explosive triple can be constructed by patching together explosive charts. Condition \ref{Rescale3} implies that this is consistent.

Finally, to check condition \ref{Rescale4}, it is sufficient to compute the compositions of $\Res_r\circ\Res_s$ and $\Res_{r+s}$ with $\Pr_M$ and $\hbar$.
\end{proof}

\begin{lem}
\label{Form rescaling}
For any $\theta\in\Omegan(M,N,E)$,
\[
\Res_r^*(\EA_M\theta) =  \EA_M(e^r\theta)\ .
\]
\end{lem}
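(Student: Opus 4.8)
The plan is to reduce to the case of an explosive chart, exploit the uniqueness clause of Lemma~\ref{Exploding forms} (that $\EA_M$ is characterised by $\hbar\,\EA_M\theta = \Pr_M^*\theta$), and then verify the identity by pulling everything back along $\Pr_M$.

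First I would recall the defining relations: for any $\psi\in\Omegan(M,N,E)$ we have $\hbar\,\EA_M\psi = \Pr_M^*\psi$ (Lemma~\ref{Exploding forms}), and by Lemma~\ref{Rescale}\,(\ref{Rescale1})--(\ref{Rescale2}) the map $\Res_r$ satisfies $\Pr_M\circ\Res_r = \Pr_M$ and $\Res_r^*\hbar = \Pr_M^*(e^{-r})\cdot\hbar$. Note that $e^r\theta\in\Omegan(M,N,E)$ whenever $\theta$ is, since multiplying by the nowhere-relevant scalar $e^r$ does not change the normality conditions along $N$; so $\EA_M(e^r\theta)$ is defined. Now apply $\Res_r^*$ to the relation $\hbar\,\EA_M\theta = \Pr_M^*\theta$:
\[
\Res_r^*(\hbar)\cdot\Res_r^*(\EA_M\theta) = \Res_r^*\Pr_M^*\theta = (\Pr_M\circ\Res_r)^*\theta = \Pr_M^*\theta \ .
\]
Using $\Res_r^*\hbar = \Pr_M^*(e^{-r})\cdot\hbar$, the left side is $\Pr_M^*(e^{-r})\cdot\hbar\cdot\Res_r^*(\EA_M\theta)$, so
\[
\hbar\cdot\Res_r^*(\EA_M\theta) = \Pr_M^*(e^{r})\cdot\Pr_M^*\theta = \Pr_M^*(e^r\theta) = \hbar\,\EA_M(e^r\theta) \ ,
\]
where the last equality is again the defining relation for $\EA_M$ applied to $e^r\theta$. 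Since $\hbar$ is a smooth function that is nonzero on a dense set and $\EE(M,N,E)$ is a manifold, cancelling $\hbar$ is legitimate, giving $\Res_r^*(\EA_M\theta) = \EA_M(e^r\theta)$.

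I would phrase the argument first for explosive charts, where Lemma~\ref{Exploding forms} and Lemma~\ref{Rescale} are established by explicit coordinate formulas, and then note that both sides of the claimed identity are natural with respect to compatible maps (using naturality of $\EA$ from Lemma~\ref{Exploding forms} and the functoriality/naturality in Lemma~\ref{Rescale}\,(\ref{Rescale3})), so the identity propagates from an atlas of explosive charts to the whole explosive triple by the same patching argument used in the proofs of Theorem~\ref{Explosion} and Lemma~\ref{Exploding forms}. The only point requiring a word of care is the cancellation of the factor $\hbar$: one should observe that $\hbar$ vanishes exactly on the $\hbar=0$ hypersurface, which has empty interior, and that both sides are smooth $1$-forms, so equality on the dense open set $\{\hbar\neq0\}$ forces equality everywhere. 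I do not expect any genuine obstacle here; the statement is essentially a bookkeeping consequence of the two earlier lemmas, and the main thing to get right is keeping track of which way the exponential factors go under the pullback $\Res_r^*$.
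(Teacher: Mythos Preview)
Your proof is correct and is essentially the same as the paper's: both apply the defining relation $\hbar\,\EA_M\theta=\Pr_M^*\theta$, use $\Pr_M\circ\Res_r=\Pr_M$ and $\Res_r^*\hbar=\Pr_M^*(e^{-r})\hbar$, and then cancel the factor of $\hbar$ by density. The extra paragraph about reducing to explosive charts and patching is unnecessary here---the identities you invoke already hold globally by Lemmas~\ref{Exploding forms} and~\ref{Rescale}---but it does no harm.
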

\begin{proof}
The defining property of $\EA_M \theta$ is that $\Pr_M^*\theta = \hbar \EA_M \theta$, so
\begin{align*}
\hbar\Res_r^*(\EA_M\theta) &= \Pr_M^*(e^r)\Res_r^*(\hbar\EA_M\theta) 
= \Pr_M^*(e^r)\Res_r^*\Pr_M^*\theta\\ &= \Pr_M^*(e^r\theta) = \hbar\EA_M(e^r\theta)\ . \qedhere
\end{align*}
\end{proof}

\begin{definition}
A \emph{Jacobi manifold} $(M,\Lambda,R)$ is a manifold $M$ with a bivector field $\Lambda \in \X^2(M)$ and vector field $R\in\X^1(M)$  such that $[\Lambda,\Lambda]=2R\wedge \Lambda$ and $[\Lambda,R]=0$. A \emph{Jacobi map} is a smooth map $\phi:M\to M'$ that pushes forward the bivector and vector on $M$ to the bivector and vector on $M'$. 
\end{definition}
In particular, a contact form, $\theta\in\Omega^1(M)$, determines a Jacobi structure by the conditions that $R\inner \theta=1$, $R\inner d\theta = 0$ and for any $\xi\in T^*M$,
\[
\xi = -(\#_\Lambda\xi)\inner d\theta + (R\inner\xi)\theta \ .
\]

I already remarked in the proof of Theorem~\ref{Integration thm} on the relationship between Weinstein's Heisenberg-Poisson construction \cite{wei4}, $\Hei$, and Lichnerowicz's Poissonization construction for Jacobi manifolds \cite{lic,c-z}. For a Poisson manifold, $(M,\Pi)$, the Poissonization is isomorphic to the  $\hbar>0$ part $\Hei_+(M,\Pi)\subset \Hei(M,\Pi)$, by the coordinate transformation, $\hbar=e^{-s}$. This suggests extending the definition of $\Hei$ to the category of Jacobi manifolds in a way that  maintains this relationship.
\begin{definition}
For any Jacobi manifold, $(M,\Lambda,R)$, let $\Hei(M,\Lambda,R)$ be the manifold $M\times\R$ with the Poisson bivector
\[
\hbar\Lambda - \hbar^2\frac{\partial}{\partial \hbar}\wedge R \ ,
\]
and for any Jacobi map, $\phi$, let $\Hei \phi:=\phi\times\id_\R$.
\end{definition}
This $\Hei$ is a functor from the category of Jacobi manifolds to the category of Poisson manifolds

\begin{definition}
\label{Contact_groupoid}
A \emph{contact groupoid} $(\jac,\theta)$ is a Lie groupoid, $\jac$, with a contact form $\theta\in\Omega^1(\jac)$ such that there exists a real function (the \emph{Reeb function}) $r \in\Ci(\jac)$ for which
\beq
\label{twisted mult}
0 =  \pr_1^*\theta - \m^*\theta + \pr_1^*(e^r)\pr_2^*\theta\ .
\eeq
A contact groupoid, $(\jac,\theta)$, \emph{integrates} a Jacobi manifold, $(M,\Lambda,R)$ if $\jac$ is a groupoid over $M$ and $\t:\jac\to M$ is a Jacobi map.
\end{definition}
\begin{remark}
This definition differs slightly from that in \cite{k-s} and \cite{c-z}, because those authors chose to make the \emph{source} map Jacobi. My definition is consistent with my convention for symplectic groupoids. This choice is more significant here, because for a symplectic groupoid the source map is anti-Poisson, but for a contact groupoid it has no nice property.
\end{remark}
Because $\theta$ is nonvanishing, it uniquely determines the function $r$ (when it exists). This definition implies that the Reeb function is a groupoid $1$-cocycle --- or equivalently, a homomorphism $r:\jac\to\R$. If $r=1$, then this is a strict contact groupoid as in Definition~\ref{Contact}.

For any contact groupoid, $(\jac,\theta)$, the cocycle property of the Reeb function, $r$, implies that $\unit^*r=0$. Pulling eq.~\eqref{twisted mult} back by $(\unit,\unit):M\to\jac_2$ shows that $\unit^*\theta=0$, so $\theta$ is normal to the unit submanifold. Because $\theta$ is nonvanishing, it determines a corank 1 subbundle, $\theta^\perp \subset \nu_M\jac$ of the normal bundle. This means that we have an explosive triple $(\jac,M,\theta^\perp)$, such that by definition $\theta\in\Omegan(\jac,M,\theta^\perp)$.

\begin{definition}
\label{Poissonization2}
Given a contact groupoid $(\jac,\theta)$ over $M$ with Reeb function, $r$, define $\Gamma := \EE(\jac,M,\theta^\perp)$ and $\Gamma_2 := \EE(\jac_2,M,\theta^\perp\oplus \theta^\perp)$ with the structure maps 
\begin{gather*}
\unit_\Gamma := \EE\unit\ ,\qquad \t_\Gamma := \EE\t\ ,\qquad \s_\Gamma := \EE\s\circ \Res_{r}\ ,\qquad \inv_\Gamma := \EE\inv\circ\Res_{r}\ ,\\
\pr_{1\,\Gamma} := \EE\pr_1\ , \qquad \m_\Gamma:= \EE\m\ ,\qquad  \pr_{2\,\Gamma} := \EE\pr_2\circ\Res_{\pr_1^* r}\ ,
\end{gather*} 
and $2$-form $\omega_\Gamma := -d(\EA_\jac\theta)$.
\end{definition}
\begin{remark}
This definition is chosen precisely so that the restriction of $\Gamma$ to $\hbar>0$ is isomorphic to $\jac\times_r\R$, as defined in \cite[Def.~2.3]{c-z}.
\end{remark}
\begin{thm}
\label{Contact explosion}
If $(\jac,\theta)$ is a contact groupoid integrating $(M,\Lambda,R)$, then 
$(\Gamma,\omega_\Gamma)$ is a symplectic groupoid integrating $\Hei(M,\Lambda,R)$.
\end{thm}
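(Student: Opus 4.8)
The plan is to follow the proof of Theorem~\ref{Integration thm} as closely as possible, the only genuinely new feature being that the contact form is multiplicative only in the twisted sense \eqref{twisted mult}, so the structure maps of $\Gamma$ must be pre-composed with the rescaling maps of Lemma~\ref{Rescale} and the bookkeeping redone with Lemma~\ref{Form rescaling} in place of naive naturality. Three things have to be checked: that $\Gamma$ is a Lie groupoid over $\Hei M = M\times\R$; that $\omega_\Gamma = -d(\EA_\jac\theta)$ is a multiplicative symplectic form; and that $\EE\t:\Gamma\to\Hei M$ is a Poisson map.

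That $\Gamma$ is a Lie groupoid is a twisted version of Lemma~\ref{Groupoid}. Since each $\Res_r$ is a diffeomorphism with inverse $\Res_{-r}$ (Lemma~\ref{Rescale}(\ref{Rescale4})) and $\s$, $\t$ are explosive submersions, $\t_\Gamma = \EE\t$ and $\s_\Gamma = \EE\s\circ\Res_r$ are submersions; the $\EE\t$-fibers are Hausdorff by the argument of Lemma~\ref{Groupoid}, and $\Gamma_2 = \EE(\jac_2,M,\theta^\perp\oplus\theta^\perp)$ is identified with the space of composable pairs exactly as there. For the algebraic groupoid axioms (the commutative diagrams of Definition~\ref{Groupoid def}) the cleanest route is to invoke the remark after Definition~\ref{Poissonization2}, which identifies the restriction of $\Gamma$ to $\hbar>0$ with the Lie groupoid $\jac\times_r\R$ of \cite[Def.~2.3]{c-z} (and, after a sign flip, the $\hbar<0$ part as well); since $\hbar\neq0$ is dense and all the structure maps are smooth, each diagram, holding on $\hbar\neq0$, holds on all of $\Gamma$. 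Finally, $\s$-connectedness of $\Gamma$ follows from that of $\jac$ together with the fact that $\Gamma\rvert_{\hbar=0}$ is a bundle of connected (Heisenberg-type) groups, as in the strict case.

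The heart of the argument is that $\omega_\Gamma$ is a multiplicative symplectic form. Closedness is immediate. Nondegeneracy is proved verbatim as in Theorem~\ref{Integration thm}: over $\hbar\neq0$, $\omega_\Gamma = \hbar^{-2}d\hbar\wedge\Pr_\jac^*\theta - \hbar^{-1}\Pr_\jac^*d\theta$, its top power is a nonzero multiple of $\hbar^{-n-2}\Pr_\jac^*(\theta\wedge(d\theta)^n)\wedge d\hbar$, and the Jacobian $\hbar^{n+2}$ of $(\Pr_\jac,\hbar)$ cancels the pole, so $\omega_\Gamma^{n+1}$ is nowhere zero. For multiplicativity, note first that $\pr_1^*\theta$, $\m^*\theta$, $\pr_2^*\theta$ all lie in $\Omegan(\jac_2,M,\theta^\perp\oplus\theta^\perp)$: their pullbacks by $(\unit,\unit)$ vanish and the algebroid-cochain description of $\theta^\perp$ is inherited from the strict case (Lemma~\ref{Symplectic potential}). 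Now compute, using the coboundary $\partial^*_\Gamma = (\EE\pr_1)^* - (\EE\m)^* + \Res_{\pr_1^*r}^*(\EE\pr_2)^*$ appropriate to the twisted structure maps: by naturality of $\EA$ (Lemma~\ref{Exploding forms}) the first two terms contribute $\EA_{\jac_2}(\pr_1^*\theta)$ and $\EA_{\jac_2}(\m^*\theta)$, while by naturality followed by Lemma~\ref{Form rescaling} the third contributes $\Res_{\pr_1^*r}^*\EA_{\jac_2}(\pr_2^*\theta) = \EA_{\jac_2}(\pr_1^*(e^r)\,\pr_2^*\theta)$. Hence
\[
\partial^*_\Gamma(\EA_\jac\theta) = \EA_{\jac_2}\bigl(\pr_1^*\theta - \m^*\theta + \pr_1^*(e^r)\,\pr_2^*\theta\bigr) = \EA_{\jac_2}(0) = 0
\]
by \eqref{twisted mult}, and therefore $\partial^*_\Gamma\omega_\Gamma = -d\,\partial^*_\Gamma(\EA_\jac\theta) = 0$. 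Thus $(\Gamma,\omega_\Gamma)$ is a symplectic groupoid.

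It remains to show $\EE\t$ is a Poisson map. By the remark after Definition~\ref{Poissonization2}, the restriction of $(\Gamma,\omega_\Gamma)$ to $\hbar>0$ is the symplectic groupoid $\jac\times_r\R$, which by \cite[Prop.~2.7]{c-z} integrates the Poissonization of $(M,\Lambda,R)$; and this Poissonization is precisely $\Hei_+(M,\Lambda,R)$, the $\hbar>0$ part of $\Hei(M,\Lambda,R)$, under $\hbar=e^{-s}$. So $\EE\t$ is a Poisson map over $\hbar>0$; the substitution $\hbar\mapsto-\hbar$ gives the same over $\hbar<0$; and since $\hbar\neq0$ is dense, $\EE\t:\Gamma\to\Hei M$ is a Poisson map. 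Together with the groupoid structure and the multiplicative symplectic form, this shows $(\Gamma,\omega_\Gamma)$ integrates $\Hei(M,\Lambda,R)$. The main obstacle is the multiplicativity step: correctly threading the Reeb-function twist of \eqref{twisted mult} through the explosion functor and the rescaling maps, which is exactly what Lemmas~\ref{Rescale} and~\ref{Form rescaling} were set up to handle.
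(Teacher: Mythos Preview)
Your proof is correct, and for nondegeneracy, multiplicativity of $\EA_\jac\theta$, and the Poisson-map step it is essentially identical to the paper's argument. The one genuine difference is in how you verify the Lie groupoid axioms for $\Gamma$.

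The paper checks each commutative diagram of Definition~\ref{Groupoid def} by hand: for every axiom it draws an explicit diagram whose cells are either $\EE$ applied to the corresponding diagram for $\jac$, an instance of Lemma~\ref{Rescale}(\ref{Rescale3}) or~(\ref{Rescale4}), or a definition from Definition~\ref{Poissonization2}, with the cocycle identity $\pr_1^*r+\pr_2^*r=\m^*r$ and $\inv^*r=-r$ doing the bookkeeping. You instead note that all the structure maps are smooth, that over $\hbar\neq0$ they coincide (via the remark after Definition~\ref{Poissonization2}) with those of the known Lie groupoid $\jac\times_r\R$ of \cite{c-z}, and that $\hbar\neq0$ is dense, so the algebraic identities extend by continuity. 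Your route is shorter and perfectly valid once one accepts that remark; the paper's route is self-contained and shows explicitly how Lemmas~\ref{Rescale} and~\ref{Form rescaling} were designed to absorb the Reeb twist. One small point: when you say $\Gamma_2$ is identified with composable pairs ``exactly as there'', note that here the relevant map is $(\EE\pr_1,\EE\pr_2\circ\Res_{\pr_1^*r})$ rather than $(\EE\pr_1,\EE\pr_2)$; since $\Res_{\pr_1^*r}$ is a diffeomorphism this does not affect the conclusion, but the fiber-product condition $\s_\Gamma\circ\pr_{1,\Gamma}=\t_\Gamma\circ\pr_{2,\Gamma}$ is itself one of the twisted identities being asserted, so strictly speaking it too is covered by your density argument rather than by a direct appeal to Lemma~\ref{Groupoid}.
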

\begin{proof}
The proof that this is a groupoid is a generalization of the proof of Lemma~\ref{Groupoid}. It was already proven there that $\t_\Gamma=\EE\t$ is a surjective submersion and its fibers are Hausdorff. The fibers of $\t_\Gamma$ are connected, because those of $\t$ are.

The other axioms of a Lie groupoid in Definition~\ref{Groupoid def} are expressed through commutative diagrams. For each of the diagrams below, the individual commutative squares and triangles are given by:
\begin{itemize}
\item
applying the functor $\EE$ to a commutative diagram for $\jac$,
\item
an instance of Lemma~\ref{Rescale}(\ref{Rescale3}),
\item
an instance of Lemma~\ref{Rescale}(\ref{Rescale4}), or
\item
the definition of one of the structure maps in Definition \ref{Poissonization2}.
\end{itemize}  

First, consider the diagram,
\[
\begin{tikzcd}
\Gamma_2 \arrow{r}{\Res_{\pr_1^*r}} \arrow{d}[swap]{\pr_{1\,\Gamma}=\EE\pr_1} \arrow[bend left=50]{rr}{\pr_{2\,\Gamma}} & \Gamma_2 \arrow{r}{\EE\pr_2} \arrow{d}{\EE\pr_1}& \Gamma \arrow{d}{\EE\t=\t_\Gamma}\\
\Gamma \arrow{r}{\Res_r} \arrow[bend right]{rr}[swap]{\s_\Gamma} & \Gamma \arrow{r}{\EE\s} & \Hei M \period
\end{tikzcd}
\]
The left square is commutative by \ref{Rescale}(\ref{Rescale3}). The right square is commutative because it is given by applying $\EE$ to $\t\circ \pr_2 = \s\circ\pr_1$. The triangles are definitions of structure maps. This shows that $\t_\Gamma\circ \pr_{2\,\Gamma} = \s_\Gamma\circ\pr_{1\,\Gamma}$, and so 
$(\pr_{1\,\Gamma},\pr_{2\,\Gamma}):\Gamma_2\to\Gamma \mathbin{{}_{\s_\Gamma}\times_{\t_\Gamma}} \Gamma$.
The proof that this is a diffeomorphism is essentially the same as in  Lemma~\ref{Groupoid}. This shows that $\Gamma_2$ and the projection maps $\pr_{1\,\Gamma}$ and $\pr_{2\,\Gamma}$ are correctly defined.

Similarly, $\Gamma_3=\EE(\jac_3,M,\theta^\perp\oplus\theta^\perp\oplus\theta^\perp)$. The various face maps can be constructed by exploding those for $\jac_3$, and modifying some with $\Res$. Associativity \eqref{associativity} can be proven in this way, but the explicit proof  requires too much notation to present it here.

 Next, to prove (\ref{s and t}a), consider the diagram,
\[
\begin{tikzcd}
\Gamma_2 \arrow[bend left=50]{rr}{\pr_{2\,\Gamma}} \arrow{r}{\Res_{\pr_1^*r}}\arrow{dr}[swap,inner sep=-.3ex]{\Res_{\m^*r}} \arrow[swap]{dd}{\m_\Gamma=\EE\m}
& \Gamma_2 \arrow{r}{\EE\pr_2} \arrow[rightarrow]{d} \arrow{d}{\Res_{\pr_2^*r}} 
& \Gamma \arrow[swap]{d}{\Res_r} \arrow[bend left]{dd}{\s_\Gamma}\\ 
& \Gamma_2 \arrow{d}{\EE\m}\arrow{r}{\EE\pr_2} 
& \Gamma\arrow[swap]{d}{\EE\s}\\
\Gamma \arrow{r}{\Res_r} \arrow[bend right,swap]{rr}{\s_\Gamma}
& \Gamma \arrow{r}{\EE\s} 
& \Hei M \period
\end{tikzcd}
\]
The upper right triangle is commutative by the cocycle property, $\pr_1^*r+\pr_2^*r=\m^*r$, and Lemma~\ref{Rescale}(\ref{Rescale4}). The lower right square is $\EE$ applied to (\ref{s and t}a) for $\jac$. The other two corners are commutative by Lemma~\ref{Rescale}(\ref{Rescale3}). This shows that the diagram is commutative, which proves (\ref{s and t}a) for $\Gamma$. More simply, (\ref{s and t}b) for $\Gamma$ is given by applying $\EE$ to (\ref{s and t}b) for $\jac$.

Next consider the diagrams \eqref{right unit}, which express the right unit axiom. For $\jac$, the map that completes these diagrams is $F := (\id_\jac,\unit\circ\s)$. The map $\EE F$ plays this role for $\Gamma$. The first two diagrams for $\Gamma$ are given by applying $\EE$ to the diagrams for $\jac$. The third diagram for $\Gamma$ comes from 
\[
\begin{tikzcd}
\Gamma \arrow{r}{\EE F} \arrow{d}{\Res_r} \arrow[bend right]{dd}[swap]{\s_\Gamma}  & \Gamma_2 \arrow{d}{\Res_{\pr_1^*r}} \arrow[bend left=70]{dd}{\pr_{2\,\Gamma}}\\
\Gamma \arrow{r}{\EE F}\arrow{d}{\EE\s} &  \Gamma_2 \arrow{d}{\EE\pr_2} \\
\Hei M \arrow{r}{\EE\unit}[swap]{ = \unit_\Gamma} &  \Gamma \period
\end{tikzcd}
\]
The bottom square is given by applying $\EE$, the top square is commutative because $F^*(\pr_1^*r) = (\pr_1\circ F)^*(r)=r$.
The left unit axiom \eqref{left unit} can be proven similarly using the map $(\unit\circ\t,\id_\jac):\jac\to\jac_2$.

The cocycle property of the Reeb function implies that $\inv^*r=-r$. With (\ref{s inv t}a) for $\jac$, this implies that 
\[
\begin{tikzcd}
\Gamma \arrow[bend left=40]{rr}{\inv_\Gamma}\arrow{r}{\Res_r} \arrow{dr}[swap]{\id_\Gamma} %\arrow[bend right=45]{rrdd}[swap]{\t_\Gamma} 
& \Gamma \arrow{r}{\EE\inv}\arrow{d}{\Res_{-r}}  &\Gamma \arrow{d}[swap]{\Res_r} \arrow[bend left]{dd}{\s_\Gamma} \\
& \Gamma \arrow{r}{\EE\inv}\arrow{dr}[swap,inner sep=0.25ex]{\t_\Gamma=\EE\t} & \Gamma\arrow{d}[swap]{\EE\s} \\
& & \Hei M 
\end{tikzcd}
\]
is commutative, which gives (\ref{s inv t}a) for $\Gamma$. More trivially, applying $\EE$ to (\ref{s inv t}b) for $\jac$ gives that diagram for $\Gamma$.

The diagram in the left inverse axiom \eqref{left inverse} for $\jac$ are completed with the map 
$G = (\inv,\id_\jac)$.  The diagrams \eqref{left inverse} for $\Gamma$ come from
\[
\begin{tikzcd}
\Gamma %\arrow[bend left=40]{rr}{G_\Gamma} 
\arrow{r}{\Res_r} \arrow[bend right=20]{rrd}[swap,inner sep=0ex]{\inv_\Gamma} & \Gamma \arrow{r}{\EE G} \arrow{dr}[swap,inner sep=0ex]{\EE \inv}& \Gamma_2 \arrow{d}{\EE\pr_1=\pr_{1\,\Gamma}} \\
& & \Gamma
\end{tikzcd}
\qquad
\begin{tikzcd}
\Gamma %\arrow[bend left=40]{rr}{G_\Gamma} 
\arrow{r}{\Res_r} \arrow{dr}[swap]{\id_\Gamma}  & \Gamma \arrow{r}{\EE G}\arrow{d}{\Res_{-r}} & \Gamma_2 \arrow{d}{\Res_{\pr_1^* r}} \arrow[bend left=70]{dd}{\pr_{2\,\Gamma}} \\
&\Gamma \arrow{r}{\EE G}\arrow{dr}[swap]{\id_\Gamma} & \Gamma_2 \arrow{d}{\EE\pr_2} \\
&& \Gamma 
\end{tikzcd}
\] 
and
\[
\begin{tikzcd}
\Gamma %\arrow[bend left=40]{rr}{G_\Gamma} 
\arrow{r}{\Res_r} \arrow{dr}[swap]{\s_\Gamma}  & \Gamma \arrow{r}{\EE G} \arrow{d}{\EE\s} & \Gamma_2 \arrow{d}{\EE\m=\m_\Gamma} \\
&\Hei M \arrow{r}{\EE\unit}[swap]{=\unit_\Gamma} & \Gamma
\end{tikzcd}
\]
The right inverse axiom \eqref{right inverse} follows similarly, using the map $(\id_\jac,\inv):\jac\to\jac_2$.

This shows that $\Gamma$ with these structure maps is a Lie groupoid.

By naturally of $\EA$ and Lemma \ref{Form rescaling},
\[
\pr_{2\,\Gamma}^*(\EA_\jac\theta) = \Res_{\pr_1^*r}^*[\EE\pr_2^*(\EA_\jac\theta)] = \Res_{\pr_1^*r}^* [\EA_{\jac_2}(\pr_2^*\theta) ]
= \EA_{\jac_2}(\pr_1^*e^{r}\cdot\pr_2^*\theta) \ .
\]
This shows that, 
\begin{align*}
\partial^*_\Gamma (\EA_\jac\theta) &=
\pr_{1\,\Gamma}^*(\EA_\jac\theta) - \m_\Gamma^*(\EA_\jac\theta) + \pr_{2\,\Gamma}^*(\EA_\jac\theta) \\
&= \EA_{\jac_2}(\pr_1^*\theta - \m^*\theta + \pr_1^*e^r\pr_2^*\theta) = 0\ ,
\end{align*}
so $\EA_\jac\theta$ --- and thus $\omega_\Gamma= -d(\EA_\jac\theta)$ --- is multiplicative. By the same proof as in Theorem~\ref{Integration thm}, $\omega_\Gamma $ is nondegenerate, therefore $(\Gamma,\omega_\Gamma)$ is a symplectic groupoid.

Contact structures relate to Jacobi structures just as symplectic structures relate to Poisson structures, and
the Poissonization of a contact structure, $\theta$, is the symplectic structure, $-d(e^s\theta) = -d(\hbar^{-1}\theta)$ (see \cite{c-z} with some adjustment of sign conventions). This means that the symplectic structure on the Poissonization $\Hei_+(\jac,\theta)$ is the same as that on the $\hbar>0$ part of $(\Gamma,\omega_\Gamma)$. Because Poissonization is a functor, this shows that the restriction of $\t_\Gamma$ to $\hbar>0$ is a Poisson map. Flipping signs shows that this is also true for $\hbar<0$, and therefore $\t_\Gamma$ is a Poisson map
\end{proof}
\begin{remark}
The functors $\EE$ and $\Hei$ are closely related. Applying $\Hei$ to $(\jac,\theta)$ gives a Poisson groupoid that is symplectic except at $\hbar=0$, and $(\Pr_\jac,\hbar) : \Gamma \to \Hei\jac$ is a Poisson map. There might be an alternative approach to constructing $(\Gamma,\omega_\Gamma)$ by using the Poisson blow-up construction in \cite{g-l}.
\end{remark}

\section{Relation to van Erp's Parabolic Tangent Groupoid}
\label{Parabolic}
In \cite{erp1}, Erik van Erp studied a generalization of the Atiyah-Singer index theorem for contact manifolds using what he calls the \emph{parabolic tangent groupoid}. This is inspired by Connes' tangent groupoid, but whereas Connes' tangent groupoid can be constructed by a simple explosion, the parabolic tangent groupoid requires a double explosion.

A \emph{contact manifold} $(M,E)$ consists of a smooth manifold $M$ and a maximally nonintegrable tangent distribution $E\subset TM$ of codimension $1$. Because the Lie algebroid of $\Pair M$ is $TM$, this is naturally identified with the normal bundle to $\unit:M\to \Pair M$, so $(\Pair M,M,E)$ is an explosive triple. The \emph{parabolic tangent groupoid} $\mathbb{T}_EM$ is the $\hbar\in[0,1]$ part of $\EE(\Pair M,M,E)$.

Over $\hbar=0$, this is a bundle of Heisenberg groups. Specifically, it is a central extension
\[
0 \to TM/E \to \mathbb{T}_EM\rvert_{\hbar=0} \to E \to 0
\]
determined by the curvature of $E$, where the vector bundle $E$ and its normal line bundle $TM/E$ are  bundles of abelian groups.

This is similar to the groupoid $\Gamma$ that I constructed in Section~\ref{Integration2}.  The $\hbar=0$ part of $\Gamma$ is also a bundle of Heisenberg groups. Specifically, it is the central extension
\[
0 \to \R\times M \to \Gamma\rvert_{\hbar=0} \to T^*M \to 0
\]
determined by $\Pi$. 
\begin{remark}
I am calling these ``Heisenberg'' groups even where $\Pi$ is degenerate.
\end{remark}

The similarity is simply due to the fact that both of these groupoids can be constructed by double explosions.
More generally, consider a Lie groupoid $\G$ over $M$ with Lie algebroid $A$. Any subbundle $E\subset A$ determines an explosive triple, $(\G,M,E)$. By the same proof as Lemma~\ref{Groupoid}, $\EE(\G,M,E)$ is a Lie groupoid over $M\times\R$. By Corollary~\ref{h0}, $\EE(\G,M,E)\rvert_{\hbar=0}$ is naturally identified with the graded vector bundle $E\oplus A/E$, and the source and target maps are both equal to the bundle projection, so this is a bundle of groups. The inhomogeneous rescaling ensures that the subbundle $A/E$ and the quotient $A$ are abelian, so this is a central extension
\beq
\label{extension}
0 \to A/E \to \EE(\G,M,E)\rvert_{\hbar=0} \to A \to 0\ .
\eeq

In particular, if $E$ has corank $1$, then this is necessarily a bundle of Heisenberg groups, as in the cases of  $\mathbb{T}_EM$ and $\Gamma$ described above.

The central extension \eqref{extension} is determined by a cocycle. This is most easily computed by considering the Lie algebroid. The Lie algebra of $\EE(\G,M,E)\rvert_{\hbar=0}$ is the graded vector bundle associated to the filtration $0\subset E\subset A$. For two sections $X,Y\in\Gamma(M,E)$, the bracket of $(X,0)$ and $(Y,0)$ is $(0,c(X,Y))$, where $c(X,Y)\in\Gamma(M,A/E)$ is the class of $[X,Y]_A$ modulo $E$. It is simple to check that this defines a bundle map $c:\Wedge^2 E\to A/E$. 

This $c$ is an $A/E$-valued $2$-cocycle on the bundle of abelian Lie algebras $E$. Because these are Abelian, $c$ is also the equivalent cocycle for the bundle of abelian groups $A/E$. In this way, it determines the extension~\eqref{extension}.

This reproduces the definition of the curvature of a contact distribution and suggests possible generalizations.
\end{appendix}

\subsection*{Acknowledgements}
I am very grateful to Chenchang Zhu for pointing out the nonintegrability of the example in Section~\ref{Irrational}; this was immediately obvious to her, but not to me. I would also like to than Ian McIntosh for several useful discussions.

\end{document}